\documentclass{amsart}

\usepackage{amsthm, amssymb, amsmath}

\input xy
\xyoption{all}

\newcommand{\quash}[1]{}  
\newcommand{\twtimes}[1]{\stackrel{#1}{\times}}
\newcommand{\isom}{\xrightarrow{\sim}}
\newcommand{\leftexp}[2]{{\vphantom{#2}}^{#1}{#2}}

\DeclareMathOperator{\GL}{GL}
\DeclareMathOperator{\SL}{SL}
\DeclareMathOperator{\Hom}{Hom}

\DeclareMathOperator{\Aut}{Aut}
\DeclareMathOperator{\Sym}{Sym}
\DeclareMathOperator{\Spec}{Spec}

\DeclareMathOperator{\Rep}{Rep}

\DeclareMathOperator{\id}{id}

\DeclareMathOperator{\Pic}{Pic}

\DeclareMathOperator{\Lie}{Lie}
\DeclareMathOperator{\Ad}{Ad}

\DeclareMathOperator{\Frac}{Frac}

\DeclareMathOperator{\Fil}{Fil}

\def\AA{\mathbb{A}}

\def\CC{\mathbb{C}}
\def\DD{\mathbb{D}}

\def\FF{\mathbb{F}}
\def\GG{\mathbb{G}}

\def\PP{\mathbb{P}}
\def\QQ{\mathbb{Q}}
\def\RR{\mathbb{R}}

\def\ZZ{\mathbb{Z}}

\def\calE{\mathcal{E}}
\def\calF{\mathcal{F}}
\def\calO{\mathcal{O}}
\def\calP{\mathcal{P}}

\def\calI{\mathcal{I}}

\def\calK{\mathcal{K}}

\def\calL{\mathcal{L}}

\def\bR{\mathbf{R}}

\def\frg{\mathfrak{g}}
\def\frt{\mathfrak{t}}
\def\frb{\mathfrak{b}}

\def\fru{\mathfrak{u}}
\def\sl{\mathfrak{sl}}
\def\frm{\mathfrak{m}}

\def\tsigma{\widetilde{\sigma}}
\def\tiota{\widetilde{\iota}}
\def\tilh{\widetilde{h}}
\def\tcF{\widetilde{\calF}}
\def\tcK{\widetilde{\calK}}
\def\tpi{\widetilde{\pi}}

\def\Grass{\mathcal{G}r}

\def\pol{\textup{pol}}
\def\xcoch{\mathbb{X}_\bullet}
\def\xch{\mathbb{X}^\bullet}
\def\pt{\textup{pt}}
\def\Mod{\textup{Mod}}
\def\Dist{\textup{Dist}}
\def\det{\textup{det}}
\def\weight{\textup{weight}}
\def\SU{\textup{SU}}
\def\pH{\leftexp{p}{H}}
\def\Lboxtimes{\stackrel{\mathbb{L}}{\boxtimes}}
\def\Sp{\textup{Sp}}
\def\twt{\Grass_X\widetilde{\times}\Grass_X}
\def\twG{G(F)\twtimes{G(\calO)}\Grass_G}

\swapnumbers
\theoremstyle{plain}
\newtheorem{theorem}[subsection]{Theorem}
\newtheorem{lemma}[subsection]{Lemma}
\newtheorem{cor}[subsection]{Corollary}
\newtheorem{prop}[subsection]{Proposition}

\theoremstyle{definition}

\newtheorem{remark}[subsection]{Remark}

\numberwithin{equation}{section}

\title{Integral homology of loop groups via Langlands dual groups}
\author{Zhiwei Yun}
\address{Zhiwei Yun: Institute for Advanced Study, Einstein Drive, Princeton, NJ 08540, USA}
\email{zyun@math.ias.edu}
\author{Xinwen Zhu}
\address{Xinwen Zhu: Department of Mathematics, Harvard University, one Oxford Street, Cambridge, MA 02138, USA}
\email{xinwenz@math.harvard.edu}

\date{August 2009}
\subjclass[2000]{57T10, 20G07}

\begin{document}

\begin{abstract} Let $K$ be a connected compact Lie group, and $G$ be its complexification. The homology of the based loop group $\Omega K$ with integer coefficients is naturally a $\ZZ$-Hopf algebra. After possibly inverting $2$ or $3$, we identify $H_*(\Omega K,\ZZ)$ with the Hopf algebra of algebraic functions on $B^\vee_e$, where $B^\vee$ is a Borel subgroup of the Langlands dual group scheme $G^\vee$ of $G$ and $B^\vee_e$ is the centralizer in $B^\vee$ of a regular nilpotent element $e\in\Lie B^\vee$. We also give a similar interpretation for the equivariant homology of $\Omega K$ under the maximal torus action.
\end{abstract}

\maketitle

\section{Introduction}
Let $K$ be a connected compact Lie group and $G$ be its complexification. Let $\Omega K$ be the based loop space of $K$. Then the homology $H_*(\Omega K,\ZZ)$ is a Hopf algebra over $\ZZ$. The goal of this note is to describe this Hopf algebra canonically in terms of the Langlands dual group of $G$.

The rational (co)homology of $\Omega K$
is easy since rationally $K$ is the product of
spheres. However, the integral (co)homology is much more subtle: $H^*(\Omega K,\ZZ)$ is not a finitely generated $\ZZ$-algebra, and the simplest example $H^*(\Omega \SU(2),\ZZ)$ involves divided power structures. The integral (co)homology of $\Omega K$ has been studied extensively by different methods.

The first method was pioneered by Bott (cf. \cite{Bott}). He developed an algorithm,
which theoretically determines the Hopf algebra structure of
integral (co)homology of all based loop groups. By
applying this algorithm, he gave explicit descriptions of the
(co)homology of special unitary groups, orthogonal groups, and the
exceptional group $G_2$. However, Bott's description
depends on the choice of a ``generating circle'' as defined in \emph{loc. cit.} and is therefore not canonical.

The second method to study the (co)homology of based loop groups comes from the theory of Kac-Moody groups. Namely, let $G$ be the
complexification of $K$. This is a reductive algebraic group over
$\CC$. Let $F=\CC(\!(t)\!)$ and $\calO=\CC[[t]]$. Then the quotient
$\Grass_G=G(F)/G(\calO)$, known as the {\em affine Grassmannian} of $G$, is a maximal partial flag variety of the Kac-Moody group $\widehat{G(F)}$ (a central extension of $G(F)$). Let $\Omega_{\pol}K$ be the
space of polynomial maps $(S^1,1)\to (K,1_K)$. More precisely, let
us parametrize $S^1$ by $e^{i\phi}, \phi\in\RR$, and let
$K\subset SO(n,\RR)$ be an embedding. Then $\Omega_{\pol}K$ is the
space of maps from $(S^1,1)\to (K,1_K)$ such that when composed
with $K\subset SO(n,\RR)$, the matrix entries of the maps are given by
Laurent polynomials of $e^{i\phi}$. It is known that
$\Omega_{\pol}K$ is homotopic to $\Omega K$. On the other hand,
the obvious map
\[
\iota:\Omega_{\pol}K\to G(F)\to \Grass_G
\]
is a homeomorphism between $\Omega_{\pol}K$ and $\Grass_G$, which
is $K$-equivariant (the action of $K$ on $\Omega_{\pol}K$ is the pointwise
conjugation and on $\Grass_G$ is the left multiplication). See \cite{PS} for details about these facts.
Therefore, $\Grass_G$ and $\Omega K$ have the same (co)homology groups. Kostant and Kumar (cf. \cite{KK}) studied the topology of the (partial) flag varieties of arbitrary Kac-Moody groups. In particular they determined the (equivariant) cohomology rings of these partial flag varieties, using an algebraic construction called the \emph{nil-Hecke ring}. D. Peterson (unpublished work) realized that the nil-Hecke ring can also be used to study the homology ring of the affine Grassmannian. The affine variety $\Spec H_*(\Grass_G)$ is usually called the \emph{Peterson variety}.

In this note, we will proceed from yet another perspective of this story, which is pioneered by Ginzburg(cf. \cite{G}). We first recall the geometric Satake isomorphism developed by Lusztig (\cite{L}),Ginzburg (\cite{G}) and Mirkovi\'c-Vilonen (\cite{MV}). We refer the details to the paper \cite{MV}. Recall that
$\Grass_G$ is a union of projective varieties. Let $\calP_k$ be the abelian category of
$G(\calO)$-equivariant perverse sheaves with $k$-coefficients on
$\Grass_G$ whose supports are finite dimensional, where $k$ is some commutative noetherian ring of finite global dimension. The convolution
product makes $\calP_k$ into a tensor
category, equipped with the functor $H^*(\Grass_G,-)$ of taking (hyper)cohomology as a
fiber functor. The geometric Satake isomorphism claims that $H^*(\Grass_G,-)$ gives an equivalence of tensor categories
\begin{equation}\label{GS}
H^*(\Grass_G,-):\calP_k\cong\Rep(G_k^\vee,k),
\end{equation}
where $G^\vee$ is the Langlands dual group scheme of $G$, defined
over $\ZZ$. Ginzburg observed that taking cohomology also gives rise
to a tensor functor
\[H^*(\Grass_G,-):\calP_k\to\Mod(H^*(\Grass_G,k)).\]
Assume that $G$ is simple and simply-connected and $k=\CC$. By Tannakian formalism, one obtains a map of Hopf algebras
\begin{equation}\label{Ginz}
H^*(\Grass_G,\CC)\to U(\frg^\vee_{\CC}),
\end{equation}
where $\frg^\vee_{\CC}$ is the Lie algebra of $G^\vee_{\CC}$.
Ginzburg proved that this map is injective, and the first Chern
class $c_1(\calL_\det)$ of the determinant line bundle $\calL_\det$
on $\Grass_G$ gets mapped to a regular nilpotent element $e$ in
$\frg^\vee_{\CC}$ under \eqref{Ginz}. Hence the map \eqref{Ginz} factors through
$U(\frg^\vee_{\CC,e})\subset U(\frg^\vee_{\CC})$, where
$\frg^\vee_{\CC,e}$ is the centralizer of $e$ in
$\frg^\vee_{\CC,e}$. Ginzburg concluded that this gives an isomorphism
\begin{equation}\label{ge}
H^*(\Grass_G,\CC)\cong U(\frg^\vee_{\CC,e}).
\end{equation}

This note is an extension of the above result to the (co)homology of $\Grass_G$ with integer coefficients. According to \cite{MV}, the geometric Satake isomorphism holds for $k=\ZZ$. Therefore, it is natural to expect that \eqref{ge} should hold over $\ZZ$ (hence should hold modulo every prime $p$). However, this is not the case, and it fails for two reasons:

First, as we already remarked, the integral cohomology ring $H^*(\Grass_G,\ZZ)$ is not finitely generated over $\ZZ$ whereas $U(\frg^\vee_e)$ is. For this reason, we prefer to working with homology because it is a finitely generated $\ZZ$-algebra (under Pontryagin product) by a result of Bott in \cite{Bott}. The Tannakian formalism (cf. \S \ref{relHtodual}) gives us a natural homomorphism of group schemes
\begin{equation}\label{grphom}\Spec H_*(\Grass_G,\ZZ)\to G^\vee_{\ZZ}.\end{equation}

Second, although the first Chern class $c_1(\calL_\det)$ still gives rise to an element $e\in\frg^\vee_{\ZZ}$, it is not a regular nilpotent element modulo every prime $p$. When $e\mod p$ is not regular, $U(\frg^\vee_e)\otimes\FF_p$ is ``too big'' to be isomorphic to $H^*(\Grass_G,\FF_p)$. We must carefully exclude these $p$'s.

Now the main theorem of this note reads as
\begin{theorem}[See Theorem \ref{th:main}(1)]\label{th:intro} Let $G$ be a reductive connected group over $\CC$ such that its derived group $G^{der}$ is almost simple. Let $K$ be a maximal compact subgroup of $G$. Let $\ell_G$ be the square of the ratio of the lengths of long roots and the short roots of $G$ (so $\ell_G$=$1,2$ or $3$). Then there is a canonical isomorphism of group schemes over $\ZZ[1/\ell_G]$:
\[\Spec H_*(\Omega K,\ZZ[1/\ell_G])\isom B^\vee_e[1/\ell_G],\]
where $B^\vee$ is a fixed Borel subgroup of $G^\vee$, $e\in\Lie
B^\vee$ is a regular nilpotent element given by \eqref{e0}, and
$B^\vee_e$ is the centralizer of $e$ in $B^\vee$.
\end{theorem}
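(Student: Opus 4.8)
The plan is to realise $\Omega K\simeq\Grass_G$ (via $\iota$), work throughout over $R:=\ZZ[1/\ell_G]$, and study the homomorphism of affine $R$-group schemes $\rho\colon\Spec H_*(\Grass_G,R)\to G^\vee_R$ supplied by \eqref{grphom} (geometric Satake over $R$, \cite{MV}, together with Ginzburg's tensor functor $\calF\mapsto H^*(\Grass_G,\calF)$ into $H^*(\Grass_G,R)$-modules). The first point is that $\rho$ factors through the closed subgroup $B^\vee_e$. Indeed $H^*(\Grass_G,R)$ is a commutative ring, so the class $e=c_1(\calL_\det)\in H^2(\Grass_G,R)$ is central; hence cup product by $e$ is an $H^*(\Grass_G,R)$-module endomorphism of every $H^*(\Grass_G,\calF)$ and so commutes with the $\Spec H_*$-coaction on each Satake module. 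Under Satake this operator is the adjoint action of a nilpotent element $e\in\frg^\vee$, the element of \eqref{e0}; one computes its components in the simple root spaces (via the multiplicities of $\calL_\det$ along the minimal Schubert $\PP^1$'s, which are $1$ at long and $\ell_G$ at short simple coroots), finds $e$ conjugate to the principal nilpotent, hence \emph{regular} over $R$ — while its short-coroot components vanish exactly modulo the primes dividing $\ell_G$. Regularity of $e$ on each fibre over $\Spec R$ forces $\rho$ into the scheme-theoretic centralizer $G^\vee_e$, which coincides with $B^\vee_e$ for the unique Borel $B^\vee$ having $e\in\Lie B^\vee$ (as $G^\vee_e$ fixes the single point of the Springer fibre of $e$).

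Next I would prove that $\rho\colon\Spec H_*(\Grass_G,R)\to B^\vee_e$ is an isomorphism. Both are flat affine $R$-group schemes of finite type: $H_*(\Grass_G,\ZZ)$ is $\ZZ$-free (affine cell paving of the Schubert varieties) and finitely generated as a ring (\cite{Bott}), whereas $B^\vee_e$ is cut out in the smooth $R$-group $B^\vee$ by $\dim\fru^\vee$ equations with every fibre of the minimal dimension, namely the rank (using again that $e$ is fibrewise regular, so that $\ell_G$ is inverted), hence a relative complete intersection, Cohen--Macaulay, and $R$-flat by the local flatness criterion. Injectivity of $\rho^*\colon\calO(B^\vee_e)_R\to H_*(\Grass_G,R)$ is then formal, because after $-\otimes_R\QQ$ it becomes Ginzburg's isomorphism \eqref{ge}, so $\ker\rho^*$ is an $R$-torsion submodule of an $R$-free module. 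For surjectivity of $\rho^*$, equivalently that $\rho$ is a closed immersion, I would write $H_*(\Grass_G,R)=\colim_\lambda H_*(\Grass_G^{\le\lambda},R)$ over the projective $G(\calO)$-orbit closures and check that the image of each $H_*(\Grass_G^{\le\lambda},R)$ lies inside $\rho^*\calO(B^\vee_e)=\rho^*\calO(G^\vee)$: this image is the annihilator of $\ker\!\big(H^*(\Grass_G,R)\to H^*(\Grass_G^{\le\lambda},R)\big)$ (the restriction is surjective, as both cohomology rings are concentrated in even degrees), and one must identify it with the coefficient space in $\rho^*\calO(G^\vee)$ of the Satake module $V_\lambda=H^*(\Grass_G,\mathrm{IC}_\lambda)$ viewed as a representation of $\Spec H_*(\Grass_G,R)$. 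That identification reduces to the faithfulness of $V_\lambda$ as a module over $H^*(\Grass_G^{\le\lambda},R)$ — a statement about the intersection cohomology of affine Schubert varieties with $\ZZ[1/\ell_G]$-coefficients; granting it, $\rho^*$ is injective and surjective, hence an isomorphism.

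Since \eqref{ge} is stated for $G$ almost simple and simply connected, I would finally reduce the general case to it: the central torus contributes nothing to $\Omega_0K$ or to the derived group of $B^\vee_e$; the simply connected cover of $G^{der}$ changes neither $\Omega_0K$ nor $(B^\vee_e)^\circ$; and $\pi_0(\Omega K)=\pi_1(K)$ matches the group of components of $B^\vee_e$ (the torsion part of $Z(G^\vee)$, Cartier dual to $\pi_1(G)$), with $R[\pi_1(K)]$ as its coordinate ring. The main obstacle throughout is the surjectivity of $\rho^*$: it requires the $\ZZ$-form of geometric Satake, a genuinely monoidal enhancement of the cohomology functor, and control of the integral intersection cohomology of affine Schubert varieties — and it is precisely here, together with the regularity of $e=c_1(\calL_\det)$, that inverting $\ell_G$ is indispensable, for if $p\mid\ell_G$ then $e$ is no longer regular modulo $p$, the scheme $B^\vee_e$ jumps in dimension and fails to be $R$-flat, and $\calO(B^\vee_e)\otimes\FF_p$ becomes strictly larger than $H_*(\Grass_G,\FF_p)$.
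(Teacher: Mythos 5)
Your overall architecture matches the paper's: factor the Tannakian map through $B^\vee_e$, establish flatness of $B^\vee_e$ over $R=\ZZ[1/\ell_G]$ by a relative-complete-intersection argument (the paper likewise exhibits $B^\vee_e$ as $\phi^{-1}(0)$ for $\phi=\Ad(-)e-e\colon B^\vee\to\fru^\vee$, checks all fibres have dimension $r$ — using Keny's theorem on regular nilpotents in bad characteristic — and invokes the local flatness criterion), and then obtain injectivity of the comparison map as a formal corollary of Ginzburg's rational isomorphism plus $R$-flatness. Your route to the $B^\vee$-reduction differs in flavour (you use regularity and the single-point Springer fibre; the paper observes that $\sigma_{can}$ preserves the MV-filtration, giving the Borel reduction \emph{before} any regularity argument, and then separately that it commutes with $\cup\, e$), but these are essentially equivalent once $e$ is known to be fibrewise regular.

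The genuine gap is in the surjectivity of $\rho^*\colon\calO(B^\vee_e)\to H_*(\Grass_G,R)$, which you acknowledge is the ``main obstacle'' but do not close. Your plan — show the image of each $H_*(\Grass_{\le\lambda},R)$ lies in $\rho^*\calO(G^\vee)$ by identifying it with the annihilator of $\ker(H^*\to H^*(\Grass_{\le\lambda}))$ and then with the matrix-coefficient space of $V_\lambda$ — has two problems. First, the matrix-coefficient space of $V_\lambda$ inside $\calO(G^\vee)$ is governed by the $G^\vee$-module $H^*(\calI^\lambda_*)$, not by $H^*(\Grass_{\le\lambda})$; since $\ZZ_{\Grass_{\le\lambda}}$ is not perverse, $H_*(\Grass_{\le\lambda})$ is not the dual of a Satake representation, and the identification you posit has no obvious Tannakian meaning. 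The paper's diagram \eqref{trisq} deliberately goes through $H_*(\calI^\lambda_*)$ and a matrix coefficient against the \emph{lowest weight vector} $v_{low}=[\Grass_{\le\lambda}]$, and what this buys is only that the image of $H_*(\Grass_\lambda)$ (the \emph{open} stratum) lands in $\tau(\calO(B^\vee_e))$ — strictly weaker than what you assert for the closure. Second, and decisively, even the paper's correct version of this containment does not by itself yield surjectivity: the images of the open strata do not obviously exhaust $H_*(\Grass_G)$. The missing ingredient, which your outline never invokes, is Bott's ``generating circle'' theorem from [B58]: for $G$ adjoint there is a \emph{single} dominant coweight $\lambda$ such that the image of $H_*(G\cdot t^\lambda)\to H_*(\Grass_G)$ generates $H_*(\Grass_G)$ as a $\ZZ$-algebra. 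Since $\tau$ is an algebra homomorphism, this single containment forces surjectivity, and Proposition \ref{p:ZU}(4) then transports the conclusion from the adjoint case to general $G$ with $G^{der}$ almost simple. Without Bott's theorem (or a genuine substitute for the ``faithfulness'' step you leave as a black box), the surjectivity half of your argument does not go through.
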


We will also prove an equivariant version of the above theorem. Let
$T$ be a maximal torus of $G$ such that $T\cap K$ is a maximal torus of $K$. Then $T\cap K$ acts on $\Omega K$ via
conjugation. Let $R_T=H^*_{T\cap K}(\pt,\ZZ)$. The $T\cap
K$-equivariant homology $H^{T\cap K}_*(\Omega K,\ZZ)$ (for precise
definition, see \eqref{homoT}) is an $R_T$-Hopf algebra, hence
$\Spec H^{T\cap K}_*(\Omega K,\ZZ)$ is a group scheme over $R_T$. On the
other hand, the $T$-equivariant Chern classes of line bundles on
$\Grass_G$ give an element $e^T\in\frg^\vee_{\ZZ}\otimes R_T[1/n_G]$
(where $n_G$ is an integer explicitly given in Remark \ref{nG}).
The equivariant version of the main theorem reads as

\begin{theorem}[See Theorem \ref{th:main}(2)] Notations are the same as in Theorem \ref{th:intro}. There is a canonical isomorphism of group schemes over $R_T[\dfrac{1}{\ell_Gn_G}]$:
\begin{equation*}
\Spec H^{T\cap K}_{*}(\Omega K,\ZZ[\dfrac{1}{\ell_Gn_G}])\isom B^\vee_{e^T}[1/\ell_G],
\end{equation*}
where $B^\vee_{e^T}$ is the centralizer of $e^T$ in $B^\vee\times\Spec R_T[1/n_G]$.
\end{theorem}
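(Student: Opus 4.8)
The plan is to rerun the proof of Theorem~\ref{th:intro} with the torus $T$ present throughout, replacing ordinary (hyper)cohomology by $(T\cap K)$-equivariant (hyper)cohomology at each step, and then to deduce the equivariant statement from the non-equivariant one by a flatness argument over $R_T$. Throughout I write $R'=R_T[1/(\ell_Gn_G)]$ and let $\frm\subset R_T$ be the augmentation ideal, so $R_T/\frm=\ZZ$; via the $(T\cap K)$-equivariant identifications $\Omega K\simeq\Omega_{\pol}K\isom\Grass_G$ one has $H^{T\cap K}_*(\Omega K,\ZZ)=H^T_*(\Grass_G,\ZZ)$ as $R_T$-Hopf algebras, $T$ now acting algebraically.

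First I would produce the equivariant group scheme together with its map to $G^\vee$. Since $\Grass_G$ carries a $T$-stable paving by affine spaces it is equivariantly formal, so for each $\calF\in\calP_\ZZ$ there is a canonical $R_T$-linear identification $H^*_T(\Grass_G,\calF)\cong H^*(\Grass_G,\calF)\otimes_\ZZ R_T$; in particular $H^*_T(\Grass_G,\ZZ)$ and its graded $R_T$-dual $H^T_*(\Grass_G,\ZZ)$ are free $R_T$-modules reducing modulo $\frm$ to $H^*(\Grass_G,\ZZ)$ and $H_*(\Grass_G,\ZZ)$. The functor $\calF\mapsto H^*_T(\Grass_G,\calF)$ together with its module structure over $H^*_T(\Grass_G,\ZZ)$ is then a tensor functor to $R_T$-modules lifting the Satake fibre functor of \eqref{GS}; dualizing and running the Tannakian construction of \S\ref{relHtodual} over $R_T$ produces a homomorphism of flat affine group schemes over $R_T$,
\[
\phi^T\colon\ \mathcal{J}^T:=\Spec H^T_*(\Grass_G,\ZZ)\ \longrightarrow\ G^\vee\times\Spec R_T,
\]
reducing modulo $\frm$ to the homomorphism \eqref{grphom} of Theorem~\ref{th:intro}.

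Next I would pin down the image of $\phi^T$. Cup product with the equivariant Chern classes of the line bundles on $\Grass_G$ acts $R_T$-linearly on the $H^*_T(\Grass_G,\calF)$ and, after Satake, is given by the action of a single element $e^T\in\frg^\vee_\ZZ\otimes R_T[1/n_G]$; being the image under $d\phi^T$ of a primitive equivariant cohomology class, $e^T$ lies in $\Lie(\phi^T(\mathcal{J}^T))$. This $e^T$ lies in $\frb^\vee\otimes R_T[1/n_G]$ and reduces modulo $\frm$ to the regular nilpotent $e$ of Theorem~\ref{th:intro}, so after inverting $\ell_G$ it is a \emph{regular} element of $\frg^\vee$ at every point of $\Spec R'$ (this, and the precise value of $n_G$, is the content of Remark~\ref{nG}); consequently $B^\vee_{e^T}=\mathrm{Cent}_{B^\vee}(e^T)=\mathrm{Cent}_{G^\vee}(e^T)$ is a smooth commutative $R'$-group scheme of relative dimension the rank, its formation commutes with base change, and its reduction modulo $\frm$ is $B^\vee_e$. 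By the localization theorem the generic fibre $\mathcal{J}^T\otimes_{R_T}\Frac R_T$ has coordinate ring $\Frac R_T[\xcoch(T)]$, i.e.\ is a commutative maximal torus of $G^\vee$; hence $\phi^T(\mathcal{J}^T)\otimes\Frac R_T$ is a commutative subgroup of $G^\vee_{\Frac R_T}$ whose Lie algebra contains $e^T$, and, a commutative group centralizing its own Lie algebra, it lies in $\mathrm{Cent}_{G^\vee}(e^T)=B^\vee_{e^T}$. Therefore the ideal of $B^\vee_{e^T}$ in $\calO(G^\vee\times\Spec R_T[1/n_G])$ maps into a torsion $R_T[1/n_G]$-submodule of the free $R_T[1/n_G]$-module $H^T_*(\Grass_G,\ZZ[1/n_G])$; that submodule vanishes, so $\phi^T$ factors through a homomorphism $\mathcal{J}^T\otimes_{R_T}R'\to B^\vee_{e^T}[1/\ell_G]$.

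Finally I would check that this homomorphism is an isomorphism, equivalently that the induced map of graded $R'$-algebras $\calO(B^\vee_{e^T})[1/\ell_G]\to H^T_*(\Grass_G,\ZZ[1/(\ell_Gn_G)])$ is bijective. Both sides are finitely generated over $R'$ — the target by Bott's finite-generation theorem together with equivariant formality, the source because $B^\vee_{e^T}$ is of finite type — and both are flat over $R'$, the source by smoothness of the regular centralizer and the target by equivariant formality. Reducing modulo the graded augmentation ideal $\frm R'$ identifies the map, up to the base change $\ZZ[1/\ell_G]\to\ZZ[1/(\ell_Gn_G)]$, with the isomorphism $\calO(B^\vee_e)\isom H_*(\Grass_G,\ZZ[1/\ell_G])$ of Theorem~\ref{th:intro}; since flatness of the target makes $\Tor_1^{R_T}(H^T_*,R_T/\frm)$ vanish, both the kernel and the cokernel of $\calO(B^\vee_{e^T})[1/\ell_G]\to H^T_*$ vanish after reduction modulo $\frm$, hence vanish by the graded Nakayama lemma. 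This would give the theorem. The hard part will be the third paragraph: identifying $e^T$ and the integer $n_G$, checking that equivariant cup product is compatible with the $\mathcal{J}^T$-action, and verifying that $e^T$ stays regular over all of $\Spec R'$ — these demand genuine computations with $\Pic^T(\Grass_G)$ and with equivariant intersection cohomology and do not formally reduce to the non-equivariant theorem, whereas the first and last paragraphs should be comparatively routine given equivariant formality and Theorem~\ref{th:intro}.
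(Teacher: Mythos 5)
Your overall outline (equivariant formality $\Rightarrow$ free $R_T$-module, Tannakian homomorphism to $G^\vee\times\Spec R_T$, factor through $B^\vee_{e^T}$, then compare with the non-equivariant case) tracks the paper's structure, but the two places where the paper does genuine work are exactly the places where you wave a hand, and one of the hand-waves is in fact false.

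\textbf{Flatness of $B^\vee_{e^T}$ over $R'_T$ is not free, and smoothness fails.} You justify flatness by ``smoothness of the regular centralizer.'' That is incorrect: over $R'_T=R_T[1/(\ell_Gn_G)]$ the fibre of $B^\vee_{e^T}$ at the augmentation point is $B^\vee_e$, and Springer's smoothness of the regular nilpotent centralizer requires good characteristic, not just characteristic prime to $\ell_G$ (e.g.\ for $G_2$ one has $\ell_G=3$ but $2$ is a bad prime). The paper only claims, and only needs, flatness, and its proof (Step~II of Theorem~\ref{th:main}) is the real content: one views $B^\vee_{e^T}$ as cut out by $\dim\fru^\vee$ equations in a smooth $R'_T$-scheme, shows each geometric fibre has dimension exactly $r$ by using the contracting $\GG_m$-action on $\Spec R_T$ to reduce to fibres over $\Spec\ZZ[1/\ell_G]$, and then invokes Keny's theorem on regular nilpotents in bad characteristic. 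This dimension bound is the same thing as ``$e^T$ stays regular over all of $\Spec R'_T$,'' which you attribute to Remark~\ref{nG}; that remark only records the integer $n_G$ and says nothing about regularity. (To your credit you flag this as ``the hard part'' at the end, but in the body you treat it as established.)

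\textbf{The final reduction is a genuinely different route, and in the paper's logical order it is circular.} You deduce that $\calO(B^\vee_{e^T})[1/\ell_G]\to H^T_*$ is an isomorphism by reducing modulo the augmentation ideal $\frm\subset R_T$ to the non-equivariant isomorphism of Theorem~\ref{th:intro} and applying graded Nakayama. The paper instead passes to the \emph{generic} fibre of $\Spec R_T$: there the equivariant localization theorem identifies $\Spec(H^T_*\otimes_{R_T}\Frac R_T)$ with $T^\vee$, and by Proposition~\ref{p:f} the element $e^T$ is regular \emph{semisimple} over $\Frac R_T$, so $B^\vee_{e^T}\otimes\Frac R_T\cong T^\vee$ as well; injectivity over $R'_T$ then follows from flatness, and surjectivity is the separate Step~I (Bott's generating circle). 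The paper then deduces the non-equivariant Theorem~\ref{th:intro} \emph{from} the equivariant statement by the base change $\Spec\QQ\to\Spec R_T$. So if you black-box Theorem~\ref{th:intro} your argument is circular in the paper's framework. It could be repaired (e.g.\ one can establish the non-equivariant isomorphism over $\QQ$ independently by comparing Poincar\'e series, using Bott's theorem and Kostant's theorem on the exponents of the regular centralizer, and then use torsion-freeness of $\calO(B^\vee_e)[1/\ell_G]$), but that argument is not in your proposal. One small upside of your Nakayama route is that it makes Step~I (the closed-embedding claim via Bott's generating circle) unnecessary, since vanishing of the cokernel mod $\frm$ already forces surjectivity; but this gain is illusory until the two gaps above are filled.

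\textbf{Minor:} your second paragraph establishes the factoring of $\phi^T$ through $B^\vee_{e^T}$ by a generic-fibre/torsion argument relying on the unproved assertion $e^T\in\Lie(\phi^T(\mathcal J^T))$. The paper's argument is both shorter and more robust: the Tannakian automorphism $\sigma_{can}$ is itself defined via cup product, hence tautologically commutes with cup product by $c^T_1$, and this commutation is exactly the statement that $\tsigma^T$ lands in the centralizer of $e^T$.
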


Finally, we also have descriptions of the cohomology $H^*(\Omega K)$ and the $K$-equivariant homology $H^K_*(\Omega K)$ in terms group-theoretic data of $G^\vee$. For details, see \S\ref{dist} and \S\ref{ss:Geq}.

\medskip

\noindent\emph{Plan of the paper.} In \S \ref{fiberfun}, we prove
that the equivariant cohomology functor $H^*_T$ is a fiber functor
from $\calP$ to $R_T:=H_T^*(\pt)$-modules that is canonically
isomorphic to $H^*\otimes R_T$. We also describe how cup product by $H^*(\Grass_G)$ interacts with the tensor structure of the functor $H^*_T$. In \S \ref{relHtodual}, we construct a canonical homomorphism
of groups schemes over $\ZZ$ from $\Spec H_*^T(\Grass_G)$ to
$B^\vee_\ZZ$. In \S \ref{line bundles}, we
digress to discuss about line bundles on the affine Grassmannian and
their equivariant structures. In \S \ref{first Chern class}, we
determine the equivariant Chern class $c_1^T(\calL_\det)$ explicitly
as an element in $\frg^\vee_\ZZ\otimes R_T$. Finally in \S
\ref{proof}, we prove our main theorem.

\medskip

\noindent\emph{Convention and Notations.} In this note, we always
assume that $G$ is a connected reductive algebraic group over $\CC$. From \S \ref{line bundles}, we will assume that $G^{der}$ is almost simple. We will fix a
Borel subgroup $B\subset G$ and a maximal torus $T\subset B$. Let
$U$ be the unipotent radical of $B$. Let $\xch(T)$ and $\xcoch(T)$
be the character group and cocharacher group of $T$. Let
$K$ be the maximal compact subgroup of $G$ containing the maximal compact torus in $T$. We shall identify $\Omega K$
with $\Omega_{\pol}K$, and denote them simply by $\Omega K$.

The Langlands dual group $G^\vee$ is the Chevalley group scheme over $\ZZ$ whose root system is identified with the coroot system of $G$. Later a Borel subgroup $B^\vee\subset G^\vee$ and a maximal torus $T^\vee\subset G^\vee$ will be fixed by the geometric Satake isomorphism. Let $U^\vee$ be the unipotent radical of $B^\vee$. The Lie algebras of these group schemes will be denoted by $\frg^\vee,\frb^\vee,\frt^\vee$ and $\fru^\vee$, which are free $\ZZ$-modules.

With dual groups in mind, our notation concerning the root system of $G$ is opposite to the usual one. Let
$\Phi^\vee,\Phi$ be the set of roots and coroots of $G$. Coweights
and coroots of $G$ will be denoted by $\lambda,\mu,\ldots$ and
$\alpha,\beta,\ldots$, while weights and roots are denoted by
$\lambda^\vee,\mu^\vee,\ldots$ and $\alpha^\vee,\beta^\vee,\ldots$.
Let $2\rho^\vee$ denote the sum of positive roots. We will give $\xcoch(T)$ a
partial order such that $\lambda\leq\mu$ if and only if
$\mu-\lambda$ is a positive integral combination of the simple
coroots. Let $(-,-)_{Kil}$ be the Killing form on $\xcoch(T)$ given by:
\[
(x,y)_{Kil}=\sum_{\alpha^\vee\in\Phi^\vee}\langle\alpha^\vee,x\rangle\langle\alpha^\vee,y\rangle.
\]

The (co)homology groups $H^*(-),H_*(-)$ are taken with $\ZZ$-coefficients unless otherwise specified. All tensor products $\otimes$ with no base ring specified are understood to be taken over $\ZZ$. For simplicity, we will write $\calP$ for $\calP_\ZZ$. For objects $\calF\in\calP$, we will abbreviate $H^*(\Grass_G,\calF),H^*_T(\Grass_G,\calF)$ by $H^*(\calF)$ and $H^*_T(\calF)$.

\medskip

\noindent\emph{Acknowledgment.} The research of Z.Y. is supported by
the National Science Foundation under the agreement No.DMS-0635607.
Any opinions, findings and conclusions or recommendations expressed
in this material are those of the authors and do not necessarily
reflect the views of the National Science Foundation. The research
of X.Z. was partially conducted during the period he was employed by
the Clay Mathematics Institute as a Liftoff Fellow.

\section{Equivariant cohomology functor}\label{fiberfun}
Let $G$ be a connected reductive algebraic group over $\CC$. Let
$F=\CC(\!(t)\!)$ and $\calO=\CC[[t]]$. Let $\Grass_G=G(F)/G(\calO)$
be the affine Grassmannian of $G$. Each coweight
$\lambda\in\xcoch(T)$ determines a point $t^\lambda\in T(F)$, and
hence a point in $\Grass_G$, which we still denote by $t^\lambda$.
For $\lambda\in\xcoch(T)$, let $\Grass_\lambda=G(\calO)t^\lambda
G(\calO)/G(\calO)$ be the $G(\calO)$-orbit through $t^\lambda$. Each
$G(\calO)$-orbit of $\Grass_G$ contains a unique point $t^\lambda$
for some {\em dominant} coweight $\lambda$. For $\lambda$ dominant,
we denote the closure of $\Grass_\lambda$ by $\Grass_{\leq\lambda}$.
Then each $\Grass_{\leq\lambda}$ is a projective variety and
$\Grass_G$ is their union.

\subsection{Weight functors and MV-filtration} In \cite{MV}, Mirkovi\'c and Vilonen introduce the weight functors
\begin{equation}\label{wt}
H^*_{c}(S_\mu,-):\calP\to\Mod(\ZZ),
\end{equation}
where $S_{\mu}$ is the $U(F)$-orbit through $t^\mu,\mu\in\xcoch(T)$
(notice that in \cite{MV}, $U$ is denoted by $N$). They show in
\cite[Theorem 3.6]{MV} that there is a natural isomorphism of {\em
tensor} functors:
\begin{equation}\label{MVsplit}
H^*(-)\cong\bigoplus_{\mu\in\xcoch(T)}H^*_c(S_\mu,-):\calP\to\Mod(\ZZ).
\end{equation}
Moreover, under the geometric Satake isomorphism \eqref{GS}, the weight functors correspond to the weight spaces of a maximal torus $T^\vee\subset G^\vee$.

For each $\calF\in\calP$, consider the filtration $\{\Fil_{\geq\mu}H^*(\calF)\}$ indexed by the partially ordered set $\xcoch(T)$:
\begin{equation*}
\Fil_{\geq\mu}H^*(\calF)=\ker(H^*(\calF)\to H^*(S_{<\mu},\calF)),
\end{equation*}
where $S_{<\mu}=\overline{S_\mu}-S_\mu$. We call this filtration the {\em MV-filtration}. It is functorial for $\calF\in\calP$. Clearly, the weight functors are the associated graded pieces of the MV-filtration. Under the geometric Satake isomorphism \eqref{GS}, there is a unique Borel subgroup $B^\vee\subset G^\vee$ containing $T^\vee$ such that the natural  $\xcoch(T)$-filtration on the $B^\vee$-module $H^*(\calF)$ coincides with the MV-filtration.

Now consider the equivariant cohomology functor:
\begin{equation}\label{eqH}
H^*_{T}(-):\calP\to\Mod^{gr}(R_T).
\end{equation}
Here $R_T=H^*_{T}(\pt,\ZZ)\isom\Sym(\xch(T))$, and $\Mod^{gr}(R_T)$ is the category of graded $R_T$-modules.

\begin{lemma}\label{l:freeT}
There is a natural isomorphism of functors
\begin{equation*}
H^*_T(\Grass,-)\isom H^*(\Grass,-)\otimes R_T:\calP\to\Mod^{gr}(R_T).
\end{equation*}
\end{lemma}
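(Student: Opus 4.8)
The plan is to establish the isomorphism by first treating the graded pieces of an appropriate filtration and then arguing that the extensions split. The key point is that $\Grass_G$ admits a cell decomposition in the form of the semi-infinite orbits $S_\mu$, or equivalently the finite-dimensional orbits $\Grass_\lambda$ and their intersections, and each relevant stratum is an affine space, so that its $T$-equivariant cohomology is free over $R_T$. Concretely, for $\calF\in\calP$ one can use the MV-filtration $\Fil_{\geq\mu}H^*(\calF)$ (or rather its $T$-equivariant analogue): since $H^*_c(S_\mu,\calF)$ is the cohomology of a perverse sheaf restricted to a union of affine cells, it is a free $\ZZ$-module concentrated in a single degree (by \cite{MV}), and likewise $H^{*}_{c,T}(S_\mu,\calF)\cong H^*_c(S_\mu,\calF)\otimes R_T$ by the contractibility of the cells and the Leray--Hirsch/equivariant-formality mechanism. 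So first I would check the lemma on the level of the associated graded of the MV-filtration.

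Next I would upgrade this to the full functor. The $T$-equivariant cohomology spectral sequence (or the long exact sequences coming from the stratification $S_{<\mu}\subset\overline{S_\mu}$) computing $H^*_T(\calF)$ has $E_1$-page given by $\bigoplus_\mu H^{*}_{c,T}(S_\mu,\calF)$, which by the previous paragraph is $\left(\bigoplus_\mu H^*_c(S_\mu,\calF)\right)\otimes R_T$. Because the non-equivariant spectral sequence degenerates (this is exactly the content of the MV decomposition \eqref{MVsplit} being a direct sum), and $R_T$ is a polynomial ring hence flat over $\ZZ$, the equivariant spectral sequence also degenerates and $H^*_T(\calF)$ is a free $R_T$-module with associated graded $H^*(\calF)\otimes R_T$. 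This already shows the two functors are abstractly isomorphic; to get a \emph{natural} isomorphism of functors one uses the standard fact that $\Grass_G$ is equivariantly formal for the $T$-action (its cohomology is concentrated in even degrees, being a union of projective spaces built from Schubert cells), so the restriction map $H^*_T(\calF)\to H^*(\calF)$ is surjective with kernel $R_T^{>0}\cdot H^*_T(\calF)$, and a functorial splitting can be constructed degree by degree. Alternatively, and perhaps more cleanly, one invokes the projection $\Grass_G\times_{T}ET\to BT$ and the fact that the Serre spectral sequence of this fibration degenerates; naturality in $\calF$ is automatic since all maps in sight are induced by maps of spaces or sheaves.

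The main obstacle I anticipate is making the isomorphism \emph{canonical and functorial in $\calF$} rather than merely establishing freeness. Degeneration of the spectral sequence and freeness of $H^*_T(\calF)$ are soft, but a splitting of a filtered module is not unique in general, so one must pin down a preferred one. The natural device is to use that everything is concentrated in even cohomological degrees: then the MV-filtration on $H^*(\calF)$ is automatically a filtration by direct summands compatible with the grading, and one can split the equivariant filtration by choosing, functorially, lifts of a homogeneous basis — or better, exploit that $H^*_T(\calF)$, being even and free, has $H^*_T(\calF)\cong H^*(\calF)\otimes_{\ZZ}R_T$ as graded $R_T$-modules by a dimension count in each bidegree, and that this identification is forced once one knows the map $H^*_T(\calF)\to H^*(\calF)$ (setting the equivariant parameters to zero) and the $R_T$-module structure. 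I would present the argument via equivariant formality of $\Grass_G$, citing the even-vanishing of $H^*(\Grass_G)$, since that makes the construction of the functorial isomorphism transparent and avoids any choices beyond those already made.
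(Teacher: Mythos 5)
Your overall strategy matches the paper's: pass to the $T$-equivariant weight functors $H^*_{T,c}(S_\mu,-)$, note that each is canonically $H^*_c(S_\mu,-)\otimes R_T$ because $H^*_c(S_\mu,\calF)$ sits in the single degree $\langle2\rho^\vee,\mu\rangle$, use parity to degenerate the spectral sequence for the $T$-equivariant MV-filtration, and then try to split that filtration. You also correctly flag the genuine difficulty: the issue is not freeness of $H^*_T(\calF)$ over $R_T$, but producing a \emph{canonical} splitting, functorial in $\calF$.

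The gap is in how you propose to resolve that difficulty. Equivariant formality and a bidegree count only establish the \emph{existence} of an isomorphism $H^*_T(\calF)\cong H^*(\calF)\otimes R_T$; they do not pin one down. Concretely, a graded free $R_T$-module $M$ together with an identification $M/\frm M\cong V$ (where $\frm\subset R_T$ is the augmentation ideal) does not carry a preferred lift $V\hookrightarrow M$: already for $M=R_T\oplus R_T[-2]$ one can modify a lift of the degree-$2$ generator by any element of $R_T^2\cdot M^0$. In $\Grass_G$ this is not a hypothetical worry, because several $\mu$'s share the same value of $\langle2\rho^\vee,\mu\rangle$, so a "degree-by-degree" lift is genuinely ambiguous and your appeal to "the identification is forced once one knows the restriction map and the $R_T$-module structure" is false. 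Your other suggestion, "choosing, functorially, lifts of a homogeneous basis," begs the question.

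What the paper actually uses is the finer structure of the MV-filtration, indexed by the \emph{poset} $\xcoch(T)$ rather than by cohomological degree. The decisive point is a strict monotonicity statement along the partial order: if $\nu>\mu$ then $\langle2\rho^\vee,\nu\rangle>\langle2\rho^\vee,\mu\rangle$, so $\Fil_{>\mu}H^*_T(\calF)$ lives entirely in cohomological degrees $>\langle2\rho^\vee,\mu\rangle$, while the graded quotient $H^*_{T,c}(S_\mu,\calF)$ sits in degrees $\geq\langle2\rho^\vee,\mu\rangle$ with its degree-$\langle2\rho^\vee,\mu\rangle$ part equal to $H^{\langle2\rho^\vee,\mu\rangle}_c(S_\mu,\calF)$. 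Consequently $H^{\langle2\rho^\vee,\mu\rangle}\Fil_{\geq\mu}\isom H^{\langle2\rho^\vee,\mu\rangle}_c(S_\mu,\calF)$, and the $R_T$-submodule of $\Fil_{\geq\mu}$ it generates is a canonical complement to $\Fil_{>\mu}$: this gives the functorial splitting $H^*_{T,c}(S_\mu,\calF)=H^{\langle2\rho^\vee,\mu\rangle}_c(S_\mu,\calF)\otimes R_T\to\Fil_{\geq\mu}$. Combined with the non-equivariant MV decomposition \eqref{MVsplit}, this yields the canonical isomorphism of functors. So you should replace the equivariant-formality/dimension-count step with this explicit identification of the lowest-degree piece of $\Fil_{\geq\mu}$; the rest of your outline is sound.
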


\begin{proof}
The construction of weight functors \eqref{wt} extends to the $T$-equivariant setting. For each $\mu\in\xcoch(T)$, the cohomology $H^*_c(S_\mu,\calF)$ is concentrated in degree $\langle2\rho^\vee,\mu\rangle$ (\cite[Theorem 3.5]{MV}), hence the equivariant cohomology $H^*_{T,c}(S_\mu,\calF)$ is free over $R_T$ with a {\em canonical} isomorphism
\begin{equation}\label{tensorRT}
H^*_{T,c}(S_\mu,\calF)=H^*_c(S_\mu,\calF)\otimes R_T
\end{equation}
There is a spectral sequence calculating $H^*_T(\calF)$ with $E_1$-terms $H^*_{T,c}(S_\mu,\calF)$. Over each component of $\Grass_G$, the nonzero terms of this $E_1$ page are all in degrees of the same parity, hence the spectral sequence degenerates. In other words, we get a $T$-equivariant version of the MV-filtration $\{\Fil_{\geq\mu}H^*_T(\calF)\}$ with associated graded pieces $H^*_{T,c}(S_\mu,\calF)$.

Again by \cite[Theorem 3.5]{MV}, $\Fil_{>\mu}H^*_T(\calF)$ is in
degrees $>\langle2\rho^\vee,\mu\rangle$, hence we have a canonical
splitting of the exact sequence
\begin{equation*}
0\to\Fil_{>\mu}\to\Fil_{\geq\mu}\to H^*_{T,c}(S_\mu,\calF)\to0
\end{equation*}
given by
\begin{equation*}
H^*_{T,c}(S_\mu,\calF)=H^{\langle2\rho^\vee,\mu\rangle}_c(S_\mu,\calF)\otimes
R_T\to (H^{\langle2\rho^\vee,\mu\rangle}\Fil_{\geq\mu})\otimes
R_T\to\Fil_{\geq\mu}.
\end{equation*}
where $H^k\Fil_{\geq\mu}$ means the degree $k$ part of $\Fil_{\geq\mu}$. Therefore, the $T$-equivariant MV-filtration on $H^*_{T}(\calF)$ also has a canonical splitting:
\begin{equation}\label{splitwt}
H^*_T(-)\cong\bigoplus_{\mu\in\xcoch(T)}H^*_{T,c}(S_\mu,-):\calP\to\Mod^{gr}(R_T).
\end{equation}

Combining \eqref{tensorRT},\eqref{splitwt} and \eqref{MVsplit}, we get isomorphisms of functors $\calP\to\Mod^{gr}(R_T)$:
\begin{equation*}
H^*_T(-)\cong\bigoplus_{\mu\in\xcoch(T)}H^*_{T,c}(S_\mu,-)\cong\bigoplus_{\mu\in\xcoch(T)}H^*_{c}(S_\mu,-)\otimes R_T\cong H^*(-)\otimes R_T.
\end{equation*}
\end{proof}

\subsection{Tensor structure on $H^*_T(-)$}\label{ss:tensor}
We first recall the definition of the convolution product $*$ on $\calP$, following \cite[\S 4]{MV}. Consider the diagram
\begin{equation*}
\Grass_G\times\Grass_G\xleftarrow{p}G(F)\times\Grass_G\xrightarrow{q}\twG\xrightarrow{m_G}\Grass_G
\end{equation*}
Here $p$ is the natural projection morphism, $q$ is the quotient map by the right $G(\calO)$-action on $G(F)\times\Grass_G$ given by $(x,y)\cdot g=(xg,g^{-1}y)$ and $m_G$ is induced by the multiplication on $G(F)$. By definition,
\begin{equation*}
\calF_1*\calF_2=m_{G,*}\tcF,
\end{equation*}
where $\tcF$ is the unique perverse sheaf on $\twG$ such that
\begin{equation*}
q^*(\tcF)=p^*\pH^0(\calF_1\Lboxtimes\calF_2).
\end{equation*}
Hence
\begin{equation}\label{defconv}
H^*_T(\calF_1*\calF_2)\cong H^*_T(\twG,\tcF).
\end{equation}

Next we construct the tensor structure on the functor $H^*_T(-)$, following the same lines as in \cite[\S 5,\S 6]{MV}. Let $X=\AA^1_{\CC}$. Recall that the global counterpart $\Grass_X$ of $\Grass_G$ classifies triples $(x,\calE,\tau)$ where $x\in X$, $\calE$ is a $G$-torsor over $X$ and $\tau$ is a trivialization of $\calE$ over $X-\{x\}$. The global counterpart of $G(F)\twtimes{G(\calO)}\Grass_G$ is the space $\twt$ which classifies tuples $(x_1,x_2,\calE_1,\calE_2,\tau_1,\tau_2)$ where $x_1,x_2\in X$, $\calE_1,\calE_2$ are $G$-torsors over $X$, $\tau_1$ is a trivialization of $\calE_1$ on $X-\{x_1\}$ and $\tau_2$ is an isomorphism of $G$-torsors $\calE_1|_{X-\{x_2\}}\isom\calE_2|_{X-\{x_2\}}$. For $x_1\neq x_2$, the fiber $(\twt)_{x_1,x_2}$ is isomorphic to $\Grass_G\times\Grass_G$; while for $x_1=x_2$, the fiber $(\twt)_{x_1,x_2}$ is isomorphic to $\twG$.

Given objects $\calF_1,\calF_2\in\calP$, we can ``spread them over the
curve $X$'' to get perverse sheaves $\calK_1,\calK_2$ on $\Grass_X$;
we can also mimic the construction of $\tcF$ to get a perverse sheaf
$\tcK$ on $\twt$ (see \cite[\S 5]{MV} for details). The restriction
of $\tcK$ on $(\twt)_{x_1,x_2}$ can be identified with
$\pH^0(\calF_1\Lboxtimes\calF_2)$ when $x_1\neq x_2$ and with $\tcF$
when $x_1=x_2$, under the above identification of fibers. In
\cite[Lemma 6.1, eqn.(6.4)]{MV}, it is shown that the direct image
sheaves $\bR^i\tpi_*(\tcK)$ are constant (as $T$-equivariant
sheaves, as we can easily see), where $\tpi:\twt\to X^2$ is the
natural projection. In other words, we have a co-specialization
isomorphism from the stalks of $\bR^i\tpi_*\tcK$ along the diagonal
$\Delta(X)\subset X^2$ to its stalks over $X^2-\Delta(X)$. Using the
above identifications, the co-specialization map takes the form:
\begin{equation}\label{sp}
\Sp^*_{\tcK}:H^*_T(\twG,\tcF)\isom H^*_T(\Grass_G\times\Grass_G,\pH^0(\calF_1\Lboxtimes\calF_2)).
\end{equation}
The argument of \cite[Lemma 6.1, eqn.(6.3)]{MV} has an obvious
$T$-equivariant version (using Lemma \ref{l:freeT}), hence
\begin{equation}\label{exttensor}
H^*_T(\Grass_G\times\Grass_G,\pH^0(\calF_1\Lboxtimes\calF_2))\cong H^*_T(\calF_1)\otimes_{R_T}H^*_T(\calF_2).
\end{equation}
Combining \eqref{sp},\eqref{defconv} and \eqref{exttensor}, we get the desired tensor structure
\begin{equation*}
H^*_T(\calF_1*\calF_2)\isom H^*_T(\calF_1)\otimes_{R_T}H^*_T(\calF_2).
\end{equation*}

\begin{lemma}
Under the tensor structure of $H^*_T(-)$ introduced above and the tensor structure of $H^*(-)\otimes R_T$ induced from that of $H^*(-)$, the natural isomorphism in Lemma \ref{l:freeT} is a tensor isomorphism.
\end{lemma}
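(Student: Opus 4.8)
The plan is to verify compatibility with the tensor structure by tracking how the isomorphism of Lemma \ref{l:freeT} was constructed, namely as a composition of three isomorphisms of functors. The crucial observation is that each of the three pieces -- the canonical freeness $H^*_{T,c}(S_\mu,-)\cong H^*_c(S_\mu,-)\otimes R_T$ of \eqref{tensorRT}, the splitting \eqref{splitwt} of the $T$-equivariant MV-filtration, and the Mirkovi\'c--Vilonen splitting \eqref{MVsplit} -- is already known (or can be shown) to be tensor-compatible in a suitable sense. So the strategy is to factor the claim through these three intermediate statements.

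\medskip

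\noindent\emph{Step 1.} First I would recall that the tensor structure on $H^*_T(-)$ was built in \S\ref{ss:tensor} by exactly mimicking the Mirkovi\'c--Vilonen construction of the tensor structure on $H^*(-)$, using the same fusion space $\twt$, the same perverse sheaf $\tcK$, the same co-specialization isomorphism, and the same identification \eqref{exttensor} which is the $T$-equivariant analogue of \cite[Lemma 6.1, eqn.(6.3)]{MV}. Since the forgetful functor $\Mod^{gr}(R_T)\to\Mod(\ZZ)$ (specializing $R_T$ to $\ZZ$, i.e. restricting along $\pt\to \pt_T$, or equivalently the natural transformation forgetting equivariance) intertwines $H^*_T(-)$ with $H^*(-)$, and all the geometric inputs (the sheaves $\tcK$, the co-specialization, etc.) are pulled back from the non-equivariant situation, the tensor structure on $H^*_T(-)$ reduces modulo $R_T^{>0}$ to the one on $H^*(-)$, and moreover is obtained from it simply by $\otimes R_T$-linearization. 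Thus it suffices to check that the isomorphism of Lemma \ref{l:freeT}, which a priori is a map of functors into $\Mod^{gr}(R_T)$, is compatible with base change along $R_T\to\ZZ$ with the identity on $H^*(-)$ -- which is clear from its construction -- and that it is $R_T$-linear -- also clear.

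\medskip

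\noindent\emph{Step 2.} The more substantive point is that the canonical splittings in the proof of Lemma \ref{l:freeT} are themselves tensor isomorphisms. For \eqref{MVsplit} this is precisely \cite[Theorem 3.6]{MV}. For the $T$-equivariant splitting \eqref{splitwt}, I would argue that it is the \emph{unique} $R_T$-linear splitting of the $T$-equivariant MV-filtration that is compatible with the grading and reduces to \eqref{MVsplit} after base change to $\ZZ$: uniqueness follows from the degree estimates already used in the proof (that $\Fil_{>\mu}H^*_T(\calF)$ lives in degrees $>\langle 2\rho^\vee,\mu\rangle$), since any two such splittings differ by a degree-increasing $R_T$-linear endomorphism that vanishes mod $R_T^{>0}$, and the graded-finiteness forces it to vanish. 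Because the tensor structure on $H^*_T(-)$ restricts the MV-filtration to a tensor filtration (this is the content of the compatibility of \eqref{sp} and \eqref{exttensor} with the filtrations, again pulled back from \cite{MV}), the tensor product of the canonical splitting on each factor is \emph{also} a canonical splitting on $H^*_T(\calF_1*\calF_2)$, hence equals it. This is the step I expect to be the main obstacle: making precise that the MV-filtration on a convolution is the convolution of the MV-filtrations in the $T$-equivariant setting, so that uniqueness of the splitting can be invoked. The non-equivariant analogue is in \cite{MV}, and the argument is formal given Lemma \ref{l:freeT} and the parity degeneration, but it must be spelled out.

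\medskip

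\noindent\emph{Step 3.} Finally I would assemble the pieces: the composite isomorphism of Lemma \ref{l:freeT} is the composition of a tensor isomorphism \eqref{MVsplit}$\otimes R_T$, the inverse of the tensor isomorphism \eqref{tensorRT} (which is tautologically multiplicative: the external product of cohomology classes concentrated in a single degree over $S_{\mu_1}$ and $S_{\mu_2}$ lands in the single degree over $S_{\mu_1+\mu_2}$, matching $\langle 2\rho^\vee,\mu_1\rangle+\langle 2\rho^\vee,\mu_2\rangle = \langle 2\rho^\vee,\mu_1+\mu_2\rangle$), and the inverse of the tensor isomorphism \eqref{splitwt} from Step 2. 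A composition of tensor isomorphisms is a tensor isomorphism, which gives the lemma.
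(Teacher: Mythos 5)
Your Steps 2 and 3 reproduce the paper's strategy of factoring the isomorphism through the weight-functor decomposition, but your argument for the tensoriality of the key piece \eqref{splitwt} is genuinely different. The paper simply invokes ``the same argument of \cite[\S 6]{MV}'' to assert that the sum of $T$-equivariant weight functors has a tensor structure and that \eqref{splitwt} respects it, and then similarly invokes \cite[Proposition 6.4]{MV} for \eqref{sumwt}. You instead give a rigidity argument: the canonical splitting of the $T$-equivariant MV-filtration is the \emph{unique} degree-preserving $R_T$-linear splitting, because any other one would differ by a degree-zero map landing in $\Fil_{>\mu}$, which is concentrated in degrees $>\langle 2\rho^\vee,\mu\rangle$ and so has nothing in degree $\langle 2\rho^\vee,\mu\rangle$; since the tensor product of the canonical splittings is again a degree-preserving $R_T$-linear splitting, uniqueness forces the two to agree. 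This is cleaner and more self-contained than re-running the fusion argument, and it isolates exactly the remaining geometric input, which you correctly identify: that the $T$-equivariant MV-filtration on a convolution agrees with the tensor-product filtration, i.e., the equivariant analogue of \cite[Proposition 6.4]{MV}. That input is needed in both proofs and in both is obtained by running the Mirkovi\'c--Vilonen fusion construction equivariantly.

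Two criticisms. First, your Step 1 is circular as stated: the claim that the tensor structure on $H^*_T(-)$ ``is obtained from [that on $H^*(-)$] simply by $\otimes R_T$-linearization'' is essentially the conclusion of the lemma rephrased, not an independent observation — the two tensor structures are built by separate geometric constructions, and the whole content of the lemma is that Lemma~\ref{l:freeT} intertwines them. Compatibility with base change $R_T\to\ZZ$ plus $R_T$-linearity does not by itself rule out an error term in positive $R_T$-degree. You should drop Step 1 and let Steps 2--3 carry the proof. Second, in Step 3 the ``tautological multiplicativity'' of \eqref{tensorRT} via degree matching is not quite the whole story for \eqref{sumwt}: you also need the equivariant weight functors to satisfy the convolution formula $H^*_{T,c}(S_\mu,\calF_1*\calF_2)\cong\bigoplus_{\mu_1+\mu_2=\mu}H^*_{T,c}(S_{\mu_1},\calF_1)\otimes_{R_T}H^*_{T,c}(S_{\mu_2},\calF_2)$ compatibly with the non-equivariant one, which is the same equivariant \cite[Proposition 6.4]{MV} input already flagged in Step 2, so it should be cited there too rather than dismissed as a degree count.
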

\begin{proof}
Using the same argument of \cite[\S 6]{MV}, one can show that the sum of the $T$-equivariant weight functors has a natural tensor structure. Moreover, the isomorphism in \eqref{splitwt} is a tensor isomorphism. Using \eqref{tensorRT} and the argument of \cite[Proposition 6.4]{MV}, it is easy to see that
\begin{equation}\label{sumwt}
\bigoplus_{\mu\in\xcoch(T)}H^*_{T,c}(S_\mu,-)\cong\bigoplus_{\mu\in\xcoch(T)}H^*_{c}(S_\mu,-)\otimes R_T:\calP\to\Mod^{\xcoch(T)}(R_T)
\end{equation}
as tensor functors. The natural isomorphism $H^*_T(-)\cong H^*(-)\otimes R_T$ in Lemma \ref{l:freeT} is given by the composition of tensor isomorphisms \eqref{splitwt},\eqref{sumwt} and \eqref{MVsplit}, hence it is also a tensor isomorphism.
\end{proof}

\begin{remark}\label{r:torsorE}
Similarly we can consider
\begin{equation*}
H^*_{G(\calO)}(-)=H^*_G(-):\calP\to\Mod^{gr}(R_G).
\end{equation*}
Here $R_G=H^*_{G}(\pt,\ZZ)$. Using the argument of \cite[Proposition
6.1]{MV}, one can show that $H^*_G(-)$ is also a tensor functor.
Further argument shows that it is actually a fiber functor (i.e., it
is exact and faithful). By \cite{Sa}, this fiber functor defines a
$G^\vee$-torsor $\calE$ over $\Spec R_G$ (here we use the fact that
$G^\vee$ is flat over $\ZZ$). Moreover, by Lemma \ref{l:freeT}, the
pull back of $\calE$ to $\Spec R_T$ admits a canonical
trivialization. However, $\calE$ itself does not have a canonical
trivialization.
\end{remark}

\subsection{Equivariant (co)homology of $\Grass_G$}\label{ss:eqhomo} Since $H^*_T(\Grass_G)=H^*_{T\cap K}(\Omega K)$ and $\Omega K$ is a homotopy commutative $H$-space, $H^*_T(\Grass_G)$ is naturally a commutative and co-commutative Hopf algebra over $R_T$. By the cell-decomposition of $\Grass_G$, $H^*_T(\Grass_G)$ is a free $R_T$-module concentrated in even degrees. We denote the coproduct on $H^*_T(\Grass_G)$ by $\Delta$. 

We define the $T$-equivariant homology
\begin{equation}\label{homoT}
H^T_*(\Grass_G):=\Hom_{R_T}(H^*_T(\Grass_G),R_T)^{gr}
\end{equation}
to be the graded dual of $H^*_T(\Grass_G)$ as an $R_T$-module, i.e., $H^T_n(\Grass_G)$ sends $H^i_{T}(\Grass_G)$ to $R_T^{i-n}$. Note that $H^T_*(\Grass_G)\cong\varinjlim H^T_*(\Grass_{\leq\lambda})$ where $H^T_*(\Grass_{\leq\lambda})=H^{-*}_T(\Grass_{\leq\lambda},\DD_{\leq\lambda})$ ($\DD_{\leq\lambda}$ is the dualizing complex of $\Grass_{\leq\lambda}$ with the canonical $T$-equivariant structure).

As the $R_T$-dual of $H_T^*(\Grass_G)$, the $T$-equivariant homology $H^T_*(\Grass_G)$ is also a commutative and co-commutative Hopf algebra over $R_T$. The multiplication on $H^T_*(\Grass_G)$ is identified with the Pontryagin product on $H^{T\cap K}_*(\Omega K)$, and is denoted by $\wedge$.

Similarly, we can define the $G$-equivariant homology $H^G_*(\Grass_G)$ of $\Grass_G$ either as the $R_G$-dual of $H^*_G(\Grass_G)$ or the direct limit of $H^{-*}_G(\Grass_{\leq\lambda},\DD_{\leq\lambda})$. This is a Hopf algebra over $R_G$.

Next, we consider the action of $H^*_T(\Grass_G)$ on the fiber functor $H^*_T(\Grass_G,-)$ via cup product.

\begin{prop}\label{p:multact}
For any $\calF_1,\calF_2\in\calP$, and any $h\in H^*_T(\Grass_G)$, we have a commutative diagram
\begin{equation*}
\xymatrix{H^*_T(\calF_1)\otimes_{R_T}H^*_T(\calF_2)\ar[r]^(.6){\sim}\ar[d]^{\cup\Delta(h)} & H^*_T(\calF_1*\calF_2)\ar[d]^{\cup h}\\
H^*_T(\calF_1)\otimes_{R_T}H^*_T(\calF_2)\ar[r]^(.6){\sim} & H^*_T(\calF_1*\calF_2)}
\end{equation*}
where the horizontal maps are given by the tensor structure of the functor $H^*_T(-)$ (see the proof of Lemma \ref{l:freeT}).

The similar statement also holds for $G$-equivariant cohomology $H_G^*$.
\end{prop}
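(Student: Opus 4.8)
The plan is to reduce the statement to a compatibility between the cup-product action of $H^*_T(\Grass_G)$ and the co-specialization isomorphism used to build the tensor structure, and then trace through the geometry that defines the tensor structure in \S\ref{ss:tensor}. Recall that the horizontal isomorphism in the diagram is the composite of \eqref{exttensor}, the co-specialization $\Sp^*_{\tcK}$ from \eqref{sp}, and \eqref{defconv}. So it suffices to check compatibility of cup product with each of these three maps separately. The first step is therefore to set up the right actors: the cup-product action of $h \in H^*_T(\Grass_G)$ on $H^*_T(\calF_1*\calF_2) = H^*_T(\widetilde{\Grass}_G, \tcF)$ comes from pulling back $h$ along $m_G\colon \twG \to \Grass_G$, i.e. from the class $m_G^*h \in H^*_T(\twG)$; while the action on the left side is cup product by the image of $\Delta(h)$, which by the very definition of the coproduct $\Delta$ on $H^*_T(\Grass_G)$ is the pullback of $h$ along the multiplication map $\Omega K \times \Omega K \to \Omega K$, equivalently along $\twG \to \Grass_G$ followed by the identification of fibers. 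The conceptual content is that $m_G$ \emph{is} the $H$-space multiplication, so ``$\cup h$ on the convolution'' and ``$\cup\Delta(h)$ on the external product'' are the same operation viewed before and after the isomorphism.

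The key steps, in order: (1) Express both vertical maps as cup product by a single global class. Spread $h$ over the curve: one wants a class $\tilde h$ on $\twt$ whose restriction to the fiber over $(x_1,x_2)$ with $x_1 \neq x_2$ is $\mathrm{pr}_1^*h + \mathrm{pr}_2^*h$ (this is exactly $\Delta(h)$ under \eqref{exttensor}) and whose restriction to the diagonal fiber is $m_G^*h$ (this gives $\cup h$ on the convolution). Producing $\tilde h$ is a matter of pulling $h$ back along the natural map $\twt \to \Grass_X$ (send $(x_1,x_2,\calE_1,\calE_2,\tau_1,\tau_2)$ to the point of $\Grass_X$ remembering $\calE_2$ with its trivialization away from $\{x_1,x_2\}$, or rather using the Beilinson--Drinfeld factorization structure exactly as in \cite[\S 5]{MV}), so that $\tilde h$ is visibly the restriction of one global class. (2) Cup product by a global class on $\twt$ commutes with the co-specialization isomorphism for $\bR^i\tpi_*\tcK$, because co-specialization is induced by the constancy of the higher direct images over $X^2$, which is a statement in the derived category of $X^2$ compatible with cup product by classes pulled back from $\Grass_X$ (these give endomorphisms of $\tpi_*\tcK$ in $D(X^2)$); this is the heart of the argument and follows the pattern of \cite[Lemma 6.1]{MV}, now decorated with a cohomology class. (3) Identify the restriction of $\tilde h$ to the two types of fibers as claimed in (1); the diagonal case is immediate from the definition of $m_G$, and the off-diagonal case amounts to the identity $\Delta(h) = \mathrm{pr}_1^*h + \mathrm{pr}_2^*h$ under the Künneth-type isomorphism \eqref{exttensor}, which is just the definition of the Hopf-algebra coproduct on $H^*_T(\Grass_G) = H^{T\cap K}_*(\Omega K)$ dualized. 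Assembling (1)--(3) gives the commutative square; the $G$-equivariant statement is identical, replacing $R_T$ by $R_G$ and \eqref{exttensor} by its $G(\calO)$-equivariant analogue from \cite[Prop.~6.1]{MV}.

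The main obstacle I anticipate is step (2): making precise that the co-specialization isomorphism, which a priori is only defined as an isomorphism of stalks (equivalently, as the transition maps of a locally constant sheaf on $X^2$), genuinely intertwines cup product by $\tilde h$. One has to observe that $\bR^i\tpi_*\tcK$ is not merely a sheaf with constant stalks but is a \emph{constant} sheaf with a canonical trivialization coming from \cite[Lemma 6.1]{MV}, and that cup product by a class pulled back from $\Grass_X$ — hence ultimately from a constant local system's worth of data along the relevant strata — respects this trivialization. Concretely, one should realize $\cup \tilde h$ as a morphism $\tpi_*\tcK \to \tpi_*\tcK[\deg h]$ in $D^b(X^2)$, note that it is a map of (shifted) constant sheaves, hence determined by what it does on any single stalk, and then the square commutes tautologically. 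A secondary, purely bookkeeping, obstacle is checking that all the identifications of fibers $(\twt)_{x_1,x_2} \cong \Grass_G \times \Grass_G$ and $\cong \twG$ are compatible with the classes $\tilde h$, $\Delta(h)$, $m_G^*h$ — this is routine but must be stated, since the whole point of the proposition is a compatibility of two a priori different module structures, so one cannot be cavalier about which isomorphism is being used where.
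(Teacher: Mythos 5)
Your reduction is set up correctly: you rightly observe that $\cup h$ on $H^*_T(\calF_1*\calF_2)$ is $\cup\,m_G^*h$ on $H^*_T(\twG,\tcF)$, and your step (2) — that cup product by a class on $\twt$ intertwines the co-specialization isomorphism $\Sp^*_{\tcK}$ because $\cup\tilde h$ is a morphism $\tpi_*\tcK\to\tpi_*\tcK[\deg h]$ of constant complexes on $X^2$ — is essentially the argument the paper uses. What remains after this reduction is precisely the identity $\Delta = \Sp^*\circ m_G^*$ as maps $H^*_T(\Grass_G)\to H^*_T(\Grass_G\times\Grass_G)$, where $\Sp^*$ is co-specialization for $\bR^i\tpi_*\ZZ$.

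The gap is in your step (3). You claim that under the K\"unneth isomorphism the off-diagonal restriction of $\tilde h$ is $\mathrm{pr}_1^*h+\mathrm{pr}_2^*h=h\otimes 1+1\otimes h$, and that this ``is just the definition of the Hopf-algebra coproduct $\ldots$ dualized.'' That is false in general: $\Delta(h)=h\otimes 1 + 1\otimes h$ only when $h$ is \emph{primitive}. The coproduct on $H^*_T(\Grass_G)$ is $m_K^*$ (pullback along the $H$-space multiplication), not the map assigning to $h$ the sum of its two projections; already for $G=\SL(2)$ the generators of $H^*(\Omega\SU(2))$ form a divided power sequence, so $\Delta$ has nontrivial cross terms. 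Consequently, the class $\tilde h$ you want to build (pulled back from the Beilinson--Drinfeld Grassmannian over $X^2$, not over $X$) restricts on off-diagonal fibers to $\Sp^*m_G^*h$ — which is exactly the unknown you are trying to compute — and not, a priori, to $h\otimes1+1\otimes h$. Taking your claim at face value would ``prove'' that every class in $H^*_T(\Grass_G)$ is primitive, which is wrong.

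What you are missing is a genuine comparison between $\Sp^*m_G^*$ and $\Delta=m_K^*$ on the nose. The paper supplies this with the map $\tiota:\Omega K\times\Omega K\times X^2\to\twt$ sending a pair of polynomial loops $(\gamma_1,\gamma_2)$ and two points $(x_1,x_2)$ to the corresponding twisted pair of trivialized bundles, translated to be centered at $x_1,x_2$. Viewing the source as a \emph{constant} family over $X^2$, one checks that $\tiota$ interchanges the co-specialization map on the target with the identity on the source, that over the diagonal $\tiota$ factors as $\iota_{(2)}=q\circ(\iota\times\iota)$ through $\twG$, and that $\iota\circ m_K = m_G\circ\iota_{(2)}$. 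Combining these gives $(\iota\times\iota)^*\Sp^*m_G^* = \iota_{(2)}^*m_G^* = m_K^*\iota^* = (\iota\times\iota)^*\Delta$, and one concludes using that $(\iota\times\iota)^*$ is an isomorphism. Without some such topological input — a way to compare the algebro-geometric convolution $m_G$ with the $H$-space multiplication $m_K$ compatibly with the spreading-out over $X^2$ — the identification of the off-diagonal restriction cannot be established, and your argument does not close.
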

\begin{proof}
From \eqref{defconv} we see that the action of $h\in H^*_T(\Grass_G)$ on $H^*_T(\calF_1*\calF_2)$ is the same as the action of $m_G^*(h)\in H^*_T(\twG)$ on $H^*_T(G(F)\twtimes{G(\calO)}\Grass_G,\tcF)$.

Let us keep track of the cup product by the cohomology of the
relevant spaces in the construction of the tensor structure in \S\ref{ss:tensor}. Since the morphism
$\tpi:\twt\to X^2$ is stratified by affine space bundles (see
\cite[Lemma 6.1, argument for (6.4)]{MV}), an easy spectral sequence argument shows that the sheaves
$\bR^i\tpi_*\ZZ$ are locally constant, hence constant. In other
words, we also have a co-specialization isomorphism from the stalks of $\bR^i\tpi_*\ZZ$
along $\Delta(X)$ to stalks elsewhere, i.e.,
\begin{equation}\label{sph}
\Sp^*:H^*_T(\twG)\isom H^*_T(\Grass_G\times\Grass_G).
\end{equation}
Moreover, by the naturality of co-specialization maps, \eqref{sp} and \eqref{sph} are compatible under the cup product:
\begin{equation*}
\Sp^*_{\tcK}(\tilh\cup v)=\Sp^*(\tilh)\cup\Sp^*_{\tcK}(v),
\end{equation*}
for any $\tilh\in H^*_T(\twG),v\in H^*_T(\twG,\tcF)$.

From the above discussion, the cup product action of $h\in H^*_T(\Grass_G)$ on $H^*_T(\calF_1*\calF_2)$, when transported to $H^*_T(\calF_1)\otimes_{R_T} H^*_T(\calF_2)$ under the isomorphism given by the tensor structure of $H^*_T(-)$, is the cup product action of the class $\Sp^*(m_G^*(h))\in H^*_T(\Grass_G\times\Grass_G)=H^*_T(\Grass_G)\otimes_{R_T}H^*_T(\Grass_G)$. Therefore, to prove the proposition, it suffices to show that
\begin{equation}\label{toprove}
\Delta=\Sp^*m^*_G:H^*_T(\Grass_G)\to H^*_T(\Grass_G\times\Grass_G).
\end{equation}
Recall that the coproduct $\Delta$ is induced from the multiplication $m_K:\Omega K\times\Omega K\to\Omega K$, hence
\begin{equation}\label{cop}
(\iota\times\iota)^*\Delta=m_K^*\iota^*:H^*_T(\Grass_G)\to H^*_{T\cap K}(\Omega K\times\Omega K).
\end{equation}

Now we construct a map $\tiota:\Omega K\times\Omega K\times
X^2\to\twt$. For a point $(\gamma_1,\gamma_2:S^1\to
K,x_1,x_2)\in\Omega K\times\Omega K\times X^2$, we consider the
complexifications $\gamma^{\CC}_1,\gamma^{\CC}_2:\AA^1-\{0\}\to G$
(recall that $\gamma_i$ are polynomial maps so that the
complexifications make sense). Let
$\gamma^{\CC}_{i,x_i}:\AA^1-\{x_i\}\to G$ be the composition of the
translation $\AA^1-\{x_i\}\isom\AA^1-\{0\}$ with $\gamma^{\CC}_i$.
We define
\begin{equation*}
\tiota(\gamma_1,\gamma_2,x_1,x_2)=(x_1,x_2,\calE_1=\calE^{triv},\calE_2=\calE^{triv},\tau_1,\tau_2)
\end{equation*}
where $\calE^{triv}$ is the trivial $G$-torsor on $X$, and the isomorphism $\tau_i$ (viewed as an automorphism of $\calE^{triv}$ over $X-\{x_i\}$) is given by $\gamma^{\CC}_{i,x_i}:X-\{x_i\}\to G$ for $i=1,2$.

The restriction of $\tiota$ to points $x_1=x_2$ can be identified with the map $\iota_{(2)}:\Omega K\times\Omega K\xrightarrow{\iota\times\iota}G(F)\times\Grass_G\xrightarrow{q}\twG$; the restriction of $\tiota$ to $x_1\neq x_2$ is exactly the map $\iota\times\iota:\Omega K\times\Omega K\to\Grass_G\times\Grass_G$. We view $\Omega K\times\Omega K\times X^2$ as a constant family over $X^2$, then $\tiota$ commutes with the co-specialization maps, i.e., we have a commutative diagram
\begin{equation*}
\xymatrix{H^*_T(\twG)\ar[r]^{\Sp^*}\ar[d]^{\iota^*_{(2)}} & H^*_T(\Grass_G\times\Grass_G)\ar[d]^{(\iota\times\iota)^*}\\
H^*_{T\cap K}(\Omega K\times\Omega K)\ar@{=}[r] & H^*_{T\cap
K}(\Omega K\times\Omega K).}
\end{equation*}

On the other hand, we also have a commutative diagram
\begin{equation*}
\xymatrix{\Omega K\times\Omega K\ar[r]^{m_K}\ar[d]^{\iota_{(2)}} & \Omega K\ar[d]^{\iota}\\
\twG\ar[r]^(.6){m_G} & \Grass_G}
\end{equation*}
Combining the two commutative diagrams, we get
\begin{equation*}
(\iota\times\iota)^*\Sp^*m_G^*=\iota_{(2)}^*m_G^*=m_K^*\iota^*.
\end{equation*}
In view of \eqref{cop}, we get
\begin{equation*}
(\iota\times\iota)^*\Delta=(\iota\times\iota)^*\Sp^*m_G^*.
\end{equation*}
Since $(\iota\times\iota)^*$ is an isomorphism, the equality \eqref{toprove} follows. This finishes the proof of the proposition.
\end{proof}


\section{Relating $H^T_*(\Grass_G)$ to the Langlands dual group}\label{relHtodual}

Consider the fiber functor
\[
H^*_T(-)\otimes_{R_T} H^T_*(\Grass_G):\calP\to\Mod^{gr}(H^T_*(\Grass_G)).
\]
We now construct a canonical tensor automorphism $\sigma_{can}$ of this functor. For any $\calF\in\calP$, the action of $\sigma_{can}$ on $H^*_T(\calF)\otimes_{R_T}H^T_*(\Grass_G)$ is given by
\[
\sigma_{can}(v\otimes h)=\sum_{i}(h^i\cup v)\otimes(h_i\wedge h),
\]
where $v\in H^*_T(\calF), h\in H^T_*(\Grass_G)$ and $\{h^i\},\{h_i\}$ are dual bases of the free $R_T$-modules $H^*_T(\Grass_G)$ and $H^T_*(\Grass_G)$. One readily checks that $\sigma_{can}$ does not depend on the choice of the dual bases $\{h^i\},\{h_i\}$.

\begin{lemma}
The natural transformation $\sigma_{can}$ is a tensor automorphism of the fiber functor $H^*_T(-)\otimes_{R_T} H^T_*(\Grass_G)$.
\end{lemma}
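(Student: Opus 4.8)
The plan is to recognize $\sigma_{can}$ as the standard ``translation automorphism'' attached to a comodule over the Hopf algebra $A:=H^T_*(\Grass_G)$, and to reduce the only nontrivial point, tensor compatibility, to Proposition \ref{p:multact}. First I would reinterpret the defining formula. Cup product makes each $H^*_T(\calF)$ into a module over the $R_T$-algebra $H^*_T(\Grass_G)$; since $\calF$ is supported on some $\Grass_{\leq\lambda}$, this action factors through $H^*_T(\Grass_{\leq\lambda})$, which is a finite free $R_T$-module by the cell decomposition, so taking $R_T$-duals yields a well-defined map $\rho_\calF\colon H^*_T(\calF)\to H^*_T(\calF)\otimes_{R_T}A$, $v\mapsto\sum_i(h^i\cup v)\otimes h_i$. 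Associativity and unitality of the cup-product action say exactly that $\rho_\calF$ is coassociative and counital, i.e.\ that $H^*_T(\calF)$ is an $A$-comodule for the coalgebra structure on $A$ dual to cup product on $H^*_T(\Grass_G)$. In this language $\sigma_{can,\calF}=(\id\otimes\wedge)\circ(\rho_\calF\otimes\id_A)$, where $\wedge$ is the Pontryagin product on $A$; this makes manifest that $\sigma_{can,\calF}$ is $A$-linear for the right $A$-action on the second factor, that it does not depend on the dual bases (the ``diagonal'' element $\sum_i h^i\otimes h_i$ being canonical), and that it is natural in $\calF$ by functoriality of cup product. So $\sigma_{can}$ is a well-defined natural transformation of functors into $\Mod^{gr}(A)$.

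Next I would address invertibility, which is the standard fact that the translation map of a comodule over a Hopf algebra is bijective. The antipode $S$ of $A$ exists because $H^*_T(\Grass_G)=H^*_{T\cap K}(\Omega K)$ is a Hopf algebra (with antipode dual to inversion on the topological group $\Omega K$); the candidate inverse is $v\otimes h\mapsto\sum_i(h^i\cup v)\otimes(S(h_i)\wedge h)$, and verifying that it is a two-sided inverse is the usual short manipulation with the coassociativity, counit and antipode axioms. Thus each $\sigma_{can,\calF}$ is an isomorphism of graded $A$-modules. I would also dispose of compatibility with the unit object here: for the skyscraper $\delta$ at the base point one has $H^*_T(\delta)=R_T$ with $\rho_\delta$ the trivial coaction (dual to the fact that restriction to the base point is the counit of $H^*_{T\cap K}(\Omega K)$), so $\sigma_{can,\delta}=\id$.

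The main content is the tensor compatibility: under the isomorphism $H^*_T(\calF_1*\calF_2)\otimes_{R_T}A\cong(H^*_T(\calF_1)\otimes_{R_T}A)\otimes_A(H^*_T(\calF_2)\otimes_{R_T}A)$ induced by the tensor structure of \S\ref{ss:tensor}, $\sigma_{can,\calF_1*\calF_2}$ must correspond to $\sigma_{can,\calF_1}\otimes_A\sigma_{can,\calF_2}$. Unwinding the two composites on $v_1\otimes v_2\otimes h$ using the explicit formula for $\sigma_{can}$, this reduces to the identity, in $H^*_T(\calF_1)\otimes_{R_T}H^*_T(\calF_2)\otimes_{R_T}A$,
\[
\sum_i\bigl((\cup\,\Delta(h^i))(v_1\otimes v_2)\bigr)\otimes(h_i\wedge h)=\sum_{j,k}\bigl((h^j\cup v_1)\otimes(h^k\cup v_2)\bigr)\otimes(h_j\wedge h_k\wedge h),
\]
where $\Delta$ is the coproduct on $H^*_T(\Grass_G)$. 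This is where Proposition \ref{p:multact} enters: it says precisely that cup product by $h^i$ on $H^*_T(\calF_1*\calF_2)$ corresponds, under the tensor isomorphism, to cup product by $\Delta(h^i)$ on $H^*_T(\calF_1)\otimes_{R_T}H^*_T(\calF_2)$, which rewrites the left-hand side; and since $\wedge$ is the $R_T$-dual of $\Delta$, one has $\sum_i\Delta(h^i)\otimes h_i=\sum_{j,k}(h^j\otimes h^k)\otimes(h_j\wedge h_k)$ (both sides being the image of the canonical diagonal element), which converts the left-hand side into the right-hand side. Combining this with the naturality, invertibility and unit compatibility above shows that $\sigma_{can}$ is a tensor automorphism.

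I expect the only genuine obstacle to be the bialgebra bookkeeping in the tensor-compatibility step: one must keep straight which of the two (co)algebra structures on $H^*_T(\Grass_G)$ and on $H^T_*(\Grass_G)$ — cup product versus Pontryagin product, and their mutual duals — is used where, so that the reduction to the displayed identity is performed correctly. Once this is pinned down, the identity is exactly the $R_T$-linear dual of Proposition \ref{p:multact}, so no further geometric input is required.
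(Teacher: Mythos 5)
Your core argument for tensor compatibility --- expanding both sides of $\sigma_{can}(v_1\otimes v_2)=\sigma_{can}(v_1)\otimes\sigma_{can}(v_2)$, invoking Proposition \ref{p:multact} to replace cup product by $h$ on $H^*_T(\calF_1*\calF_2)$ with cup product by $\Delta(h)$ on the tensor factors, and using that $\Delta$ on $H^*_T(\Grass_G)$ is the $R_T$-dual of the Pontryagin product $\wedge$ on $H^T_*(\Grass_G)$ --- is exactly the paper's proof. The comodule reformulation, the basis-independence check, the antipode argument for invertibility, and the unit-object compatibility are all correct but are routine points the paper leaves implicit (in a rigid tensor category a tensor endomorphism of a fiber functor is automatically an automorphism, so once tensor compatibility is established nothing else needs checking).
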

\begin{proof}
Let $\calF_1,\calF_2\in\calP$ and $v_i\in H^*_T(\calF_i)$ for $i=1,2$. We have to show that
\begin{equation*}
\sigma_{can}(v_1\otimes v_2)=\sigma_{can}(v_1)\otimes\sigma_{can}(v_2).
\end{equation*}
Here we view $v_1\otimes v_2$ as an element in $H^*_T(\calF_1*\calF_2)$ under the identification $H^*_T(\calF_1)\otimes_{R_T} H^*_T(\calF_2)\cong H^*_T(\calF_1*\calF_2)$.

On one hand, we have
\begin{eqnarray}\label{tensig}
\sigma_{can}(v_1)\otimes\sigma_{can}(v_2)&=&\sum_{i,j}(h^i\cup v_1\otimes h_i)\otimes(h^j\cup v_2)\otimes h_j\\
\notag &=& \sum_{i,j}\left((h^i\cup v_1)\otimes (h^j\cup
v_2)\right)\otimes(h_i\wedge h_j).
\end{eqnarray}
On the other hand, by Proposition \ref{p:multact},
\begin{equation}\label{sigten}
\sigma_{can}(v_1\otimes v_2)=\sum_{k}\left(\Delta(h^k)(v_1\otimes v_2)\right)\otimes h_k.
\end{equation}
Since the coproduct $\Delta$ on $H^*_T(\Grass_G)$ is adjoint to the Pontryagin product $\wedge$ on $H^T_*(\Grass_G)$, the two expressions \eqref{tensig} and \eqref{sigten} are the same.
\end{proof}

By this lemma and the Tannakian formalism, $\sigma_{can}$ defines an $H^T_*(\Grass_G)$-valued point of the group scheme $\Aut^{\otimes}(H^*_T)$, which is canonically isomorphic to $G^\vee\times\Spec R_T$ by Lemma \ref{l:freeT}:
\begin{equation}\label{definetau}
\tsigma^T:\Spec H^T_*(\Grass_G)\to G^\vee\times\Spec R_T.
\end{equation}
We also have the non-equivariant counterpart:
\begin{equation}\label{definetau0}
\tsigma:\Spec H_*(\Grass_G)\to G^\vee.
\end{equation}

\begin{lemma} The morphisms $\tsigma^T$ and $\tsigma$ as in (\ref{definetau}) and (\ref{definetau0}) are homomorphisms of group schemes over $R_T$ and $\ZZ$.
\end{lemma}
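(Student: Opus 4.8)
The plan is to show that the morphism $\tsigma^T$ respects the group structures, i.e. that it intertwines the comultiplication on $H^T_*(\Grass_G)$ (coming from the Pontryagin coproduct, which is dual to the cup product on $H^*_T(\Grass_G)$) with the comultiplication on the coordinate ring of $G^\vee\times\Spec R_T$, and similarly sends the unit to the unit; the non-equivariant statement for $\tsigma$ will then follow by base change along $R_T\to\ZZ$ (setting the equivariant parameters to zero), or by repeating the same argument verbatim without the torus. Throughout I will use the identification $\Aut^\otimes(H^*_T)\cong G^\vee\times\Spec R_T$ from Lemma \ref{l:freeT}, under which the multiplication on the group scheme corresponds to composition of tensor automorphisms of the fiber functor.

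The key step is a direct computation with $\sigma_{can}$. Concretely, write $A=H^T_*(\Grass_G)$ with Pontryagin product $\wedge$ and coproduct $\Delta_A$ (dual to $\cup$ on $H^*_T(\Grass_G)$). By Tannakian formalism, $\tsigma^T$ is a group homomorphism precisely when the two $A\otimes_{R_T}A$-points of $\Aut^\otimes(H^*_T)$ obtained from $\tsigma^T$ in the two natural ways agree: on one side, $\mu_{A}^*\tsigma^T$ where $\mu_A:\Spec(A\otimes_{R_T}A)\to\Spec A$ is dual to $\wedge$ — this is the automorphism $\sigma_{can}$ ``evaluated after multiplying the two $A$-coordinates''; on the other side, the composition $(\tsigma^T\otimes\tsigma^T)\circ(\text{group law of }G^\vee)$, which corresponds to the composite tensor automorphism $\sigma_{can}'\circ\sigma_{can}''$ where $\sigma_{can}'$, $\sigma_{can}''$ are $\sigma_{can}$ with coefficients extended along the two inclusions $A\hookrightarrow A\otimes_{R_T}A$. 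So I must check, for $v\in H^*_T(\calF)$,
\[
\sigma_{can}'\bigl(\sigma_{can}''(v\otimes 1\otimes 1)\bigr)=\sum_i (h^i\cup v)\otimes \Delta_A(h_i),
\]
both sides living in $H^*_T(\calF)\otimes_{R_T}(A\otimes_{R_T}A)$. Expanding the left side using the definition of $\sigma_{can}$ twice gives $\sum_{i,j}(h^j\cup h^i\cup v)\otimes(h_i\otimes h_j)$ (with appropriate bookkeeping of which tensor factor each $h_j$ lands in), and this equals the right side exactly because $\Delta_A$ is adjoint to $\cup$: $\sum_i h^i\otimes(\text{component of }\Delta_A(h_i)) = \sum_{i,j} (h^j\cup h^i)\otimes(h_i\otimes h_j)$ as elements of $H^*_T(\Grass_G)\otimes_{R_T}(A\otimes_{R_T}A)$. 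This is the same adjunction identity already used in the proof of the previous lemma (that $\sigma_{can}$ is a tensor automorphism), now applied one level up. Separately, the identity of $\Spec A$ corresponds to the counit $A\to R_T$, and one checks that $\sigma_{can}$ specialized along the counit is the identity automorphism, because the counit pairs $h$ against the unit class $1\in H^*_T(\Grass_G)$, which is the image of $h^{i_0}$ dual to $1$, so $\sigma_{can}(v\otimes 1)=\sum_i(h^i\cup v)\otimes\langle h_i,1\rangle\cdot 1 = v$; hence $\tsigma^T$ sends the identity to the identity.

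I expect the main obstacle to be purely organizational rather than conceptual: keeping the two tensor factors of $A\otimes_{R_T}A$ straight while composing two copies of $\sigma_{can}$, and correctly identifying which ``side'' of $\Aut^\otimes(H^*_T)\cong G^\vee\times\Spec R_T$ the comultiplication of the Hopf algebra $\calO(G^\vee\times\Spec R_T)$ corresponds to — in other words, making precise the statement ``$\tsigma^T$ is a homomorphism iff $\sigma_{can}$ is grouplike for $\Delta_A$'' in the Tannakian dictionary. Once that dictionary is set up, the verification reduces to the single adjunction identity between $\cup$ and $\Delta_A$ together with co-associativity, both of which are formal. Finally, $R_T$-linearity (resp. $\ZZ$-linearity) of the homomorphism is automatic since $\sigma_{can}$ is an automorphism of a functor to $A$-modules with $A$ an $R_T$-algebra and all structures in sight are $R_T$-linear.
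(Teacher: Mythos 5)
Your proof is correct and follows essentially the same route as the paper's. The core step is identical: expanding the composed tensor automorphism in dual bases to get $\sum_{i,j}(h^i\cup h^j\cup v)\otimes h_i\otimes h_j$, and matching this against $\sum_k(h^k\cup v)\otimes\Delta_*(h_k)$ via the adjunction between the homology coproduct and the cup product (the paper works non-equivariantly and notes the equivariant case is the same; you go the other way, but that difference is immaterial, and your extra check of the counit is harmless though automatic for group schemes).
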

\begin{proof}
We give the argument for the non-equivariant version, and the equivariant version is similar. We abbreviate $H_*(\Grass_G)$ by $H_*$ and $H^*(\Grass_G)$ by $H^*$. We need to check that the following diagram is commutative
\[
\xymatrix{\calO(G^\vee)\ar[rr]^{\tilde{\tau}}\ar[d]^{\mu^*} && H_*\ar[d]^{\Delta_*}\\
\calO(G^\vee)\otimes\calO(G^\vee)\ar[rr]^{\tilde{\tau}\otimes\tilde{\tau}} && H_*\otimes H_*}
\]
where $\mu^*$ and $\Delta_*$ are the coproducts on $\calO(G^\vee)$ and $H_*$. The composition $(\tilde{\tau}\otimes\tilde{\tau})\circ\mu^*$ corresponds to the following automorphism of the tensor functor $H^*(-)\otimes H_*\otimes H_*$:
\begin{eqnarray}\label{ex1}
\notag H^*(\calF)\otimes H_*\otimes H_*&\to& H^*(\calF)\otimes H_*\otimes H_*\\
v\otimes1\otimes1&\mapsto&\sum_{i,j}(h^i\cup h^j\cup v)\otimes h_i\otimes h_j.
\end{eqnarray}
for $v\in H^*(\calF)$ and $\{h^i\},\{h_i\}$ dual bases of $H^*$ and $H_*$ as before. On the other hand, the composition $\Delta_*\circ\tilde{\tau}$ is given by the automorphism
\begin{eqnarray}\label{ex2}
\notag H^*(\calF)\otimes H_*\otimes H_*&\to& H^*(\calF)\otimes H_*\otimes H_*\\
v\otimes1\otimes1&\mapsto&\sum_{i}(h^k\cup v)\otimes\Delta_*(h_k).
\end{eqnarray}
Since $\Delta_*:H_*\to H_*\otimes H_*$ is adjoint to $\cup:H^*\otimes H^*\to H^*$, the two expressions \eqref{ex1} and \eqref{ex2} are the same.
\end{proof}

\begin{prop}\label{p:ZU}
\begin{enumerate}
\item []
\item The homomorphism $\tsigma^T$ factors through $B^\vee\times\Spec R_T$, and is $\GG_m$-equivariant with respect to the natural grading on $R_T$ and the $\GG_m$-action on $G^\vee$ given by the conjugation by $2\rho^\vee:\GG_m\to T^\vee$;
\item We have a natural isomorphism of group schemes $\Spec H_0(\Grass_G)\isom Z^\vee$, the center of $G^\vee$;
\item If $G$ is simply-connected, $\tsigma$ factors through $U^\vee$;
\item In general, let $G^{sc}$ be the simply-connected cover of $G^{der}$, and we identify the neutral component $\Grass_G^0$ with $\Grass_{G^{sc}}$. Then we have a natural isomorphism of Hopf algebras
\[
H_*(\Grass_G)\cong H_0(\Grass_G)\otimes H_*(\Grass_{G^{sc}}).
\]
The morphism $\tsigma:\Spec H_*(\Grass_G)\to B^\vee$ factors as
\[
\Spec H_*(\Grass_G)\to \Spec H_0(\Grass_G)\times\Spec H_*(\Grass_{G^{sc}})\xrightarrow{*} Z^\vee\times U^\vee\hookrightarrow B^\vee.
\]
where the starred arrow is the product of the morphisms in (2) and (3). Here we have identified unipotent radicals $U^\vee$ of $B^\vee$ for $G^\vee$ and $(G^{sc})^\vee=(G^\vee)^{ad}$.
\end{enumerate}
\end{prop}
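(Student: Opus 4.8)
The plan is to establish the four statements in order, using the homomorphism property of $\tsigma^T$ and $\tsigma$ together with the MV-filtration from \S\ref{fiberfun}. For part (1), the key point is that $\sigma_{can}$ preserves the $T$-equivariant MV-filtration $\{\Fil_{\geq\mu}H^*_T(\calF)\}$: indeed, cup product by $h^i\in H^*_T(\Grass_G)$ raises cohomological degree, hence (by the parity/degree estimate from \cite[Theorem 3.5]{MV} used in the proof of Lemma \ref{l:freeT}) it cannot decrease the filtration index, so $\sigma_{can}$ acts on the associated graded as the identity plus strictly-filtration-increasing terms. By the characterization of $B^\vee$ via the MV-filtration (stated after \eqref{MVsplit}), an automorphism of the fiber functor preserving this filtration and acting as the identity on $\gr$ is precisely a point of the unipotent radical $U^\vee$ — but more relevantly, any automorphism merely \emph{preserving} the filtration is a point of $B^\vee$; this gives the factorization through $B^\vee\times\Spec R_T$. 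The $\GG_m$-equivariance is a matter of tracking gradings: $R_T=\Sym(\xch(T))$ is graded with $\xch(T)$ in degree $2$, the cohomological grading on $H^*_T(\calF)$ corresponds under geometric Satake to the grading by $\langle 2\rho^\vee,-\rangle$ on weight spaces, and $\sigma_{can}$ is built from dual bases $\{h^i\},\{h_i\}$ which pair degree $k$ with degree $k$; chasing this through the formula for $\sigma_{can}$ shows it intertwines the grading with conjugation by $2\rho^\vee$.

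For part (2), note $H_0(\Grass_G)$ is the degree-zero part, dual to $H^0(\Grass_G)=\prod_{\pi_0(\Grass_G)}\ZZ$, so $\Spec H_0(\Grass_G)$ is the constant group scheme on $\pi_0(\Grass_G)=\pi_1(G)=\xcoch(T)/\ZZ\Phi$. Under geometric Satake this identifies with $\xch(T^\vee)/\ZZ\Phi^\vee$, whose Cartier dual is exactly $Z^\vee$; one checks $\tsigma$ restricted to the degree-zero part realizes this duality, because on $H^0(\calF)$ the functor $\sigma_{can}$ acts through the components, i.e.\ through the central character, matching the action of $Z^\vee$. For part (3): when $G$ is simply connected, $\pi_1(G)=0$, so $\Grass_G$ is connected, $H_0(\Grass_G)=\ZZ$, and hence the composite $\Spec H_*(\Grass_G)\to B^\vee\to T^\vee$ lands in the trivial subgroup (as it factors through $\Spec H_0$ by looking at the torus quotient, the torus being the ``graded piece'' of $B^\vee$); combined with the factorization through $B^\vee$ from part (1), this forces $\tsigma$ to factor through $U^\vee=\ker(B^\vee\to T^\vee)$. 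Part (4) is then a bookkeeping step: the Künneth decomposition $\Grass_G^0=\Grass_{G^{sc}}$ together with $\Grass_G\cong\pi_0(\Grass_G)\times\Grass_G^0$ as spaces gives the Hopf-algebra splitting $H_*(\Grass_G)\cong H_0(\Grass_G)\otimes H_*(\Grass_{G^{sc}})$, and one verifies $\tsigma$ respects this product because $\sigma_{can}$ for $G$ decomposes compatibly into its ``central'' part (part (2)) and its ``unipotent'' part (part (3) applied to $G^{sc}$), using that $(G^{sc})^\vee=(G^\vee)^{ad}$ has the same unipotent radical $U^\vee$.

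The main obstacle I expect is part (1) — specifically, rigorously showing that $\sigma_{can}$ preserves the $T$-equivariant MV-filtration and identifying the image with $B^\vee$ rather than something larger. This requires combining Proposition \ref{p:multact} (which tells us how $\cup h$ interacts with the tensor structure) with the degree bounds on $\Fil_{>\mu}H^*_T(\calF)$ established in the proof of Lemma \ref{l:freeT}, and then invoking that $B^\vee$ is \emph{defined} as the stabilizer of the MV-filtration inside $\Aut^\otimes(H^*_T)\cong G^\vee\times\Spec R_T$. The $\GG_m$-equivariance, while conceptually routine, also demands care in matching the two gradings precisely. Parts (2)–(4) are comparatively formal once (1) is in hand, reducing to the structure of $\pi_0(\Grass_G)$ and the compatibility of $\tsigma$ with the obvious product decompositions.
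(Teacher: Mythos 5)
Your outline matches the paper's argument closely, and parts (2)--(4) are in good shape, but the justification you give for the key step in (1) --- that $\sigma_{can}$ preserves the MV-filtration --- is not the right one, and would fail as stated. You argue that cup product by $h^i$ ``raises cohomological degree, hence ... cannot decrease the filtration index.'' This does not follow: the filtration is indexed by the \emph{partial} order on $\xcoch(T)$, and the degree $\langle 2\rho^\vee,\mu\rangle$ does not determine $\mu$. Two incomparable cocharacters $\mu_1,\mu_2$ can have $\langle 2\rho^\vee,\mu_1\rangle < \langle 2\rho^\vee,\mu_2\rangle$, and a degree-raising operator could in principle carry $\Fil_{\geq\mu_1}$ into $\gr_{\mu_2}$ with $\mu_2\not\geq\mu_1$. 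The correct mechanism is much more direct: $\Fil_{\geq\mu}H^*_T(\calF)$ is \emph{defined} as the kernel of the restriction $H^*_T(\calF)\to H^*_T(S_{<\mu},\calF)$, and cup product by any $h\in H^*_T(\Grass_G)$ commutes with this restriction, so $\Fil_{\geq\mu}$ is an $H^*_T(\Grass_G)$-submodule. This gives filtration preservation without any degree bookkeeping, and is what the paper is implicitly using. (Your degree argument \emph{is} the right tool one step later, in (3), where it correctly shows that only degree-zero classes contribute to the induced action on the associated graded, forcing the $T^\vee$-component to factor through $\Spec H_0(\Grass_G)$.)

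A smaller point on (4): the Hopf-algebra splitting $H_*(\Grass_G)\cong H_0(\Grass_G)\otimes H_*(\Grass_{G^{sc}})$ via ``$\Grass_G\cong\pi_0\times\Grass^0_G$ as $H$-spaces'' is a bit quick --- you need the Pontryagin product to be compatible with a chosen product decomposition of the components, which requires a small argument. The paper sidesteps this by invoking functoriality of the geometric Satake isomorphism along $G^{sc}\to G$, which gives a commutative square relating $\tsigma$ for $G$ and for $G^{sc}$; combined with the commutative diagram from the proof of (2)--(3) showing $\Spec H_*\to T^\vee$ factors through $\Spec H_0\cong Z^\vee$, this yields the factorization through $Z^\vee\times U^\vee$ cleanly. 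Your route can be made to work, but you should either justify the $H$-space splitting carefully or switch to the functoriality argument. Everything else --- the identification $\pi_0(\Grass_G)\cong\xcoch(T)/\ZZ\Phi$ and the Cartier duality with $Z^\vee$ in (2), the degree/grading chase for $\GG_m$-equivariance in (1), and the deduction of (3) from $Z^\vee=1$ in the simply-connected case --- is correct and in line with the paper.
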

\begin{proof}
(1) The action of $\sigma_{can}$ preserves the MV-filtration on $H^*_T(-)\otimes_{R_T}H^T_*(\Grass_G)$, which implies that $\tsigma^T$ factors through $B^\vee\times\Spec R_T$. On the other hand, the action of $\sigma_{can}$ also preserves the grading on $H^*_T(-)\otimes_{R_T}H^T_*(\Grass_G)$, which implies that $\tsigma^T$ is $\GG_m$-equivariant.

(2) The component group $\pi_0(\Grass_G)\cong\pi_1(K)\cong\xcoch(T)/\ZZ\Phi$ (here $\ZZ\Phi$ is the coroot lattice of $G$). Therefore $H_0(\Grass_G)$ is the group algebra of $\xcoch(T)/\ZZ\Phi$, hence isomorphic to $\calO(Z^\vee)$ as Hopf algebras.

(3) and (4) Since the tensor automorphism $\sigma_{can}$ preserves the MV-filtration, it induces a tensor automorphism $\bar{\sigma}_{can}$ on the associated graded pieces, i.e., on the tensor functor
\[
\bigoplus_{\mu\in\xcoch(T)} H^*_c(S_\mu,-)\otimes H_*(\Grass_G):\calP\to\Mod^{\xcoch(T)}(H_*(\Grass_G))
\]
The tensor automorphism $\bar{\sigma}_{can}$ gives a homomorphism $\Spec H_*(\Grass_G)\to T^\vee$ which is clearly the same as the composition $\Spec H_*(\Grass_G)\stackrel{\tsigma}{\to} B^\vee\stackrel{\pi}{\to} T^\vee$. On the other hand, since $H_c^*(S_\mu,\calF)$ is concentrated in one degree, the action of $\bar{\sigma}_{can}$ on it is given by
\[
v\mapsto (\bar{\mu}^*\cup v)\otimes\bar{\mu}\in H^*_c(S_\mu,\calF)\otimes H_0(\Grass_G).
\]
Here $v\in H^*_c(S_\mu,\calF)$, $\bar{\mu}$ is the image of $\mu\in\xcoch(T)$ in $\xcoch(T)/\ZZ\Phi^\vee\subset\ZZ[\xcoch(T)/\ZZ\Phi^\vee]= H_0(\Grass_G)$, and $\bar{\mu}^*$ is the element in $H^0(\Grass_G)$ that takes value $1$ on the component corresponding to $\bar{\mu}$ and $0$ on other components. This means that the tensor automorphism $\bar{\sigma}_{can}$ in fact comes from a tensor automorphism of the functor
\[
\bigoplus_{\mu\in\xcoch(T)} H^*_c(S_\mu,-)\otimes H_0(\Grass_G):\calP\to\Mod^{\xcoch(T)}(H_0(\Grass_G))
\]
followed by the functor $\Mod^{\xcoch(T)}(H_0(\Grass_G))\to\Mod^{\xcoch(T)}(H_*(\Grass_G))$ given by $\otimes_{H_0}H_*(\Grass_G)$. In other words, there is a commutative diagram
\begin{equation}\label{proZ}
\xymatrix{\Spec H_*(\Grass_G)\ar[d]\ar[rr]^{\tsigma} & & B^\vee\ar[d]^{\pi}\\
\Spec H_0(\Grass_G)\ar[r]^(0.7){\sim} & Z^\vee\ar@{^(->}[r] & T^\vee}
\end{equation}
which implies that $\tsigma$ factors through $\pi^{-1}(Z^\vee)=Z^\vee\times U^\vee\subset B^\vee$.

Now if $G=G^{sc}$, $Z^\vee$ is trivial and the map $H_*(\Grass_G)\to B^\vee$ factors through $U^\vee$. This proves (3). In general, by the functoriality of the geometric Satake isomorphism for the homomorphism $G^{sc}\to G$, we have a commutative diagram
\begin{equation}\label{proU}
\xymatrix{\Spec H_*(\Grass_G)\ar[r]\ar[d] & Z^\vee\times U^\vee\ar[d]\\
\Spec H_*(\Grass_{G^{sc}})\ar[r] & U^\vee}
\end{equation}
Combining \eqref{proZ} and \eqref{proU}, the assertion is proved.
\end{proof}

\begin{remark}\label{r:loc}
The composition
\begin{equation}\label{prT}
\Spec H^T_*(\Grass_G)\xrightarrow{\tsigma^T} B^\vee\times\Spec R_T\to T^\vee\times\Spec R_T
\end{equation}
can be interpreted via equivariant localization. In fact, as we remarked in the proof of Proposition \ref{p:ZU}(3)(4), the above morphism $\Spec H^T_*(\Grass_G)\to T^\vee\times\Spec R_T$ comes from the tensor automorphism $\bar{\sigma}_{can}$ of the $T$-equivariant weight functors \eqref{sumwt}. On the other hand, the cup product action of $H^*_T(\Grass_G)$ on $H^*_{T,c}(S_\lambda,-)$ factors through the restriction map $i_\lambda^*:H^*_T(\Grass_G)\to H^*_T(S_\lambda)\isom H^*_T(\{t^\lambda\})$ (the second arrow is an isomorphism because $S_\lambda$ contracts to $t^\lambda$ under a $\GG_m$-action). Therefore, for $v_\lambda\in H^*_{T,c}(S_\lambda,\calF)$, we have
\begin{equation*}
\bar{\sigma}_{can}(v_\lambda)=\sum_i(i_\lambda^*(h^i)\cup v_\lambda)\otimes h_i=v_\lambda\otimes i_{\lambda,*}(1),
\end{equation*}
where $i_{\lambda,*}:R_T=H^T_*(\{t^\lambda\})\to H^T_*(\Grass_G)$ is the adjoint of $i^*_\lambda$. The product of $i^*_{\lambda}$ is the equivariant localization map:
\begin{equation*}
Loc^*=\prod_{\lambda\in\xcoch(T)} i^*_{\lambda}:H^*_T(\Grass_G)\to\prod H^*_T(\{t^\lambda\}).
\end{equation*}
Let
\begin{equation}\label{homloc}
Loc_*=\sum_{\lambda\in\xcoch(T)} i_{\lambda,*}:R_T[\xcoch(T)]\to H^T_*(\Grass_G)
\end{equation}
be the adjoint of $Loc^*$, where $R_T[\xcoch(T)]$ is the $R_T$-valued group ring of the abelian group $\xcoch(T)$. The above discussion shows that the morphism \eqref{prT} coincides with
\begin{equation*}
\Spec(Loc_*):\Spec H^T_*(\Grass_G)\to\Spec R_T[\xcoch(T)]=T^\vee\times\Spec R_T.
\end{equation*}
\end{remark}


\section{Remarks on line bundles}\label{line bundles}
From this section on, we assume that $G^{der}$ is almost simple. In this case, it is well-known that the Picard group of each component of $\Grass_G$ is isomorphic to $\ZZ$. Let $\calL_{\det}$ denote the line bundle on $\Grass_G$ whose restriction to each component is the positive generator of the Picard group.

\subsection{Equivariant line bundles} Fix a representation $V$ of $G$. The homomorphism $\phi:G\to\GL(V)$ induces a morphism of ind-schemes $\Grass_{\phi}:\Grass_G\to\Grass_{\GL(V)}$ over $\CC$. We identify $\Grass_{\GL(V)}$ with the set of $\calO$-lattices in $V\otimes F$ in the usual way. Fix a lattice $\Lambda_1\in\Grass_{\GL(V)}$. Define a line bundle $\calL_{\Lambda_1}$ whose value at a point $\Lambda\in\Grass_{\GL(V)}$ is the line
\begin{equation*}
\det(\Lambda:\Lambda_1):=\det(\Lambda/\Lambda\cap\Lambda_1)\otimes\det(\Lambda_1/\Lambda\cap\Lambda_1)^{\otimes-1}
\end{equation*}
where $\det(-)$ means taking the top exterior power of a finite dimensional $\CC$-vector space. Let $\calL^\phi_{\Lambda_1}=\Grass_{\phi}^*\calL_{\Lambda_1}$ be the pull-back line bundle on $\Grass_G$.

Let $G(F)_{\Lambda_1}$ be the stabilizer of the lattice $\Lambda_1$ in $G(F)$(via the action $\phi$). Then $\calL^\phi_{\Lambda_1}$ has a natural $G(F)_{\Lambda_1}$-equivariant structure: for each $g\in G(F)_{\Lambda_1}$, we identify $\det(\Grass_\phi(x):\Lambda_1)$ and $\det(\Grass_\phi(gx):\Lambda_1)=\det(\Grass_\phi(gx):g\Lambda_1)$ via the action of $g$. Let $H\subset G(F)_{\Lambda_1}$ be an algebraic subgroup. Let $x\in\Grass_G$ and $H_x$ be the stabilizer of $x$ in $H$, so that we can identify the $H$-orbit through $x$ with the homogeneous space $H/H_x$. Via the induction functor, we have an identification
\begin{equation*}
\Pic^{H}(H\cdot x)=\Pic^H(H/H_x)=\xch(H_x).
\end{equation*}
It is easy to see that the image of $\calL^\phi_{\Lambda_1}|_{H\cdot x}\in\Pic^{H}(H\cdot x)$ in $\xch(H_x)$ is the character given by the action of $H_x$ on the line $\det(\Grass_{\phi}(x):\Lambda_1)$ (notice that $H_x$ stabilizes both $\Grass_{\phi}(x)$ and $\Lambda_1$).

We consider the special case where $\Lambda_0=V\otimes_{\CC}\calO$. Since $\Lambda_0$ is stabilized by $G(\calO)$, $\calL^\phi_{\Lambda_0}$ has a natural $G(\calO)$-equivariant structure. Take $H=G\subset G(\calO)$ and $x=t^\lambda$. Then $H_x=P_\lambda\subset G$ is a parabolic subgroup containing the maximal torus $T$. We can identify the image of $\calL^\phi_{\Lambda_0}$ in $\xch(P_\lambda)\subset\xch(T)$ as the character of the action of $T$ on $\det(t^\lambda\Lambda_0:\Lambda_0)$, which is
\begin{equation}\label{reschern}
-\sum_{\chi^\vee\in\weight(V)}\dim V_{\chi^\vee}\cdot\langle\chi^\vee,\lambda\rangle\chi^\vee.
\end{equation}
We can further specialize to the case $H=H_x=T$, then the element in \eqref{reschern} is also the restriction of $c^T_1(\calL^\phi_{\Lambda_0})$ to the point $t^{\lambda}$, if we identify $\xch(T)$ with $H^2_T(\pt)$, .

\begin{lemma}\label{l:deg}(See also \cite[Lemma 10.6.1]{Sor}) We have $\calL^\phi_{\Lambda_1}=\calL^{\otimes d_V}_{\det}\in\Pic(\Grass_G)$ where
\[
d_V=\frac{1}{2}\sum_{\chi^\vee\in\weight(V)}\dim V_{\chi^\vee}\cdot\langle\chi^\vee,\theta\rangle^2.
\]
Here $\theta$ stands for the coroot corresponding to the highest root $\theta^\vee$ of $G$.
\end{lemma}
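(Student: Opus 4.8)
The plan is to reduce the equality of line bundles to one degree computation on a rational curve, using the restriction formula \eqref{reschern}; the integer $d_V$ that appears is nothing but the Dynkin index of $\phi\colon G\to\GL(V)$.

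First I would get rid of the auxiliary lattice and pass to the neutral component. For any two lattices $\Lambda_1,\Lambda_1'$ the relative determinant obeys the cocycle identity $\det(\Lambda:\Lambda_1)\cong\det(\Lambda:\Lambda_1')\otimes\det(\Lambda_1':\Lambda_1)$ in which the last factor is a fixed line; hence $\calL_{\Lambda_1}$ and $\calL_{\Lambda_1'}$ differ by a constant line bundle and $\calL^\phi_{\Lambda_1}\cong\calL^\phi_{\Lambda_0}$ in $\Pic(\Grass_G)$ with $\Lambda_0=V\otimes\calO$. Moreover left translation by $t^\mu$ ($\mu\in\xcoch(T)$) is an automorphism of $\Grass_G$ permuting the connected components; the same cocycle identity shows $(t^\mu)^*\calL^\phi_{\Lambda_0}\cong\calL^\phi_{\Lambda_0}$, while $(t^\mu)^*$ visibly carries the positive generator of the Picard group of one component to that of another, i.e.\ preserves $\calL_{\det}$. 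Thus it suffices to prove the identity on the neutral component $\Grass_G^0=\Grass_{G^{sc}}$, where $\Pic\cong\ZZ\cdot\calL_{\det}$; writing $\calL^\phi_{\Lambda_0}|_{\Grass_G^0}=\calL_{\det}^{\otimes d}$, we must show $d=d_V$.

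To compute $d$ I would restrict to the minimal Schubert curve $C_0\cong\PP^1$ of $\Grass_G^0$, i.e.\ the closure of the one-dimensional Iwahori orbit; since $G^{der}$ is almost simple this is the unique one-dimensional Schubert cell, so $[C_0]$ generates $H_2(\Grass_G^0,\ZZ)\cong\ZZ$. As $c_1$ identifies $\Pic(\Grass_G^0)$ with $H^2(\Grass_G^0,\ZZ)\cong\ZZ$ and $\calL_{\det}$ is the positive generator, $c_1(\calL_{\det})$ is the positive generator of $H^2=\Hom(H_2,\ZZ)$, whence $\deg(\calL_{\det}|_{C_0})=\langle c_1(\calL_{\det}),[C_0]\rangle=1$ and $d=\deg(\calL^\phi_{\Lambda_0}|_{C_0})$. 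Now $C_0$ is $T$-stable with fixed points $t^0$ and $t^\theta$, and the $T$-weight on its tangent line at $t^0$ is the highest root $\theta^\vee$ (as one reads off from the affine Bruhat decomposition near $t^0$). The localization formula for a $T$-equivariant line bundle $\mathcal M$ on $\PP^1$ then gives, in $\xch(T)\cong H^2_T(\pt)$,
\[
i_0^*c_1^T(\mathcal M)-i_\theta^*c_1^T(\mathcal M)=\deg(\mathcal M|_{C_0})\cdot\theta^\vee .
\]
Taking $\mathcal M=\calL^\phi_{\Lambda_0}$ with its canonical $G(\calO)$-equivariant structure and inserting \eqref{reschern} (so that $i_0^*c_1^T(\calL^\phi_{\Lambda_0})=0$ and $i_\theta^*c_1^T(\calL^\phi_{\Lambda_0})=-\sum_{\chi^\vee\in\weight(V)}\dim V_{\chi^\vee}\langle\chi^\vee,\theta\rangle\chi^\vee$), the left-hand side equals $\sum_{\chi^\vee}\dim V_{\chi^\vee}\langle\chi^\vee,\theta\rangle\chi^\vee$. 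Evaluating both sides at the coweight $\theta$ and using $\langle\theta^\vee,\theta\rangle=2$ yields $\deg(\calL^\phi_{\Lambda_0}|_{C_0})=\frac{1}{2}\sum_{\chi^\vee}\dim V_{\chi^\vee}\langle\chi^\vee,\theta\rangle^2=d_V$, as wanted.

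The cocycle identity, the translation step, and the $\PP^1$ localization formula are all formal. The one genuine input is the normalization of $\calL_{\det}$---equivalently, $\deg(\calL_{\det}|_{C_0})=1$---which is part of the standard description of $\Pic(\Grass_G)$ already invoked at the beginning of this section; this is also the only place where the sign conventions (the tangent weight at $t^0$, the identification $\xch(T)=H^2_T(\pt)$, and the choice of positive generator of $\Pic$) need to be matched carefully, and I expect that bookkeeping to be the main---though modest---obstacle. As a sanity check, one should confirm that $\calL^\phi_{\Lambda_1}$, being pulled back from an effective class on $\Grass_{\GL(V)}$, indeed comes out as a \emph{positive} power of $\calL_{\det}$, in agreement with $d_V\ge 0$.
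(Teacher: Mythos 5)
Your proof is correct in outline and takes a genuinely different route from the paper's at the crucial computational step. You and the paper both reduce the statement to a single degree computation on the basic $\PP^1$ in the neutral component (and your cocycle/translation reduction to that case is essentially the same as the paper's, except you translate by $t^\mu$ instead of a general $g\in G(F)$, which is fine since $\pi_0(\Grass_G)\cong\xcoch(T)/\ZZ\Phi$). But then the paper avoids localization entirely: it replaces $\Lambda_0$ by an auxiliary reference lattice $\Lambda_1=\oplus_n t^{-[n/2]}\calO\otimes V_n$ that is stable under the $\phi_0(\SL(2))$ built from $\theta$, identifies $\calL^\phi_{\Lambda_1}|_{\PP^1}$ with the homogeneous line bundle on $\SL(2)/B_2$ induced by the $B_2$-character on $\det(t^{-\theta}\Lambda_0:\Lambda_1)$, and reads off the degree from a one-point calculation. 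Your approach dispenses with the auxiliary lattice and instead compares the $T$-characters at the two fixed points via the $\PP^1$-localization formula; this is a reasonable alternative and each method trades one "well-known fact" (the degree of $L_\chi$ on $\SL(2)/B_2$ for the paper, the two-fixed-point localization identity for you) for the other.

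There is, however, a concrete bookkeeping error in your description of the curve. The unique one-dimensional $I$-orbit in $\Grass_G^0$ is $It^{-\theta}$, and the $T$-fixed points of $C_0=\overline{It^{-\theta}}$ are $t^0$ and $t^{-\theta}$, not $t^\theta$. Your statement that the tangent weight at $t^0$ is $\theta^\vee$ is the correct one for a curve running to $t^{-\theta}$ (the tangent direction is $t^{-1}x_{\theta^\vee}$); it is inconsistent with your claimed second fixed point $t^\theta$. If you replace $t^\theta$ by $t^{-\theta}$ and keep the tangent weight $\theta^\vee$ and the identification $c_1^T(\CC_\chi)=\chi$, the localization formula $i_{t^0}^*c_1^T - i_{t^{-\theta}}^*c_1^T = \deg\cdot\theta^\vee$ produces $-d_V$ rather than $d_V$. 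In other words, with the naïve sign conventions your (corrected) computation lands on the opposite sign, and the sign must be recovered from the precise meaning of "positive generator" of $\Pic$ together with the orientation built into the identification $\Pic^{H}(H/H_x)=\xch(H_x)$ that underlies \eqref{reschern}; the paper's own "well-known fact" about line bundles on $\SL(2)/B_2$ hides the same subtlety. You explicitly flagged this bookkeeping as the main obstacle, and indeed it is: as written, your argument gets the stated answer only because the wrong fixed point cancels against it. Your closing "sanity check" should also be revisited, since $\calL_{\Lambda_1}$ with fiber $\det(\Lambda:\Lambda_1)$ is not obviously an effective (or ample) class on $\Grass_{\GL(V)}$ under the paper's sign conventions, so it is not a reliable independent check of the sign of $d_V$.
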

\begin{proof}For $i\in\pi_0(\Grass_G)$, let $\Grass^i_G$ denote the corresponding connected component. We first claim that the degree of $\calL^\phi_{\Lambda_1}$ on the generating cycle of $H_2(\Grass^i_G)(\cong\ZZ)$ is independent of the component $\Grass_G^i$. In fact, for any $g\in G(F)$, by choosing a trivialization of the line $\det(g\Lambda_1:\Lambda_1)$, we get an isomorphism of line bundles $g^*\calL^\phi_{\Lambda_1}\cong\calL^\phi_{\Lambda_1}$. Since $G(F)$ transitively permutes the components of $\Grass_G$, our assertion follows. In the following we concentrate on the neutral component $\Grass^0_G$.

Let $I\subset G(\calO)$ be the Iwahori subgroup of $G(F)$ containing $B$. The only 1-dimensional $I$-orbit in $\Grass^0_G$ is $It^{-\theta}$, and the fundamental class of its closure generates $H_2(\Grass^0_G)$. Therefore the number $d_V$ in question is the degree of $\calL^\phi_{\Lambda_1}$ on the closure of $It^{-\theta}$. 

Fix an $\sl(2)$-triple $(x_{\theta^\vee},\theta,x_{-\theta^\vee})$ where $x_{\pm\theta^\vee}$ belongs to the root space $\frg_{\theta^\vee}$. Let $\phi_0:\SL(2)\to G(F)$ be the homomorphism which sends the standard triple $(e,h,f)$ to $(t^{-1}x_{\theta^\vee},\theta,tx_{-\theta^\vee})$. Then the closure of $It^{-\theta}$ in $\Grass_G$ is the $\phi_0(\SL(2))$-orbit through $t^{-\theta}$, and the stabilizer of $t^{-\theta}$ is the upper triangular matrices $B_2\subset\SL(2)$. To compute the degree of $\calL^\phi_{\Lambda_1}$ on this orbit, we will choose a lattice $\Lambda_1$ which is stable under the $\SL(2)$-action and calculate the character of $B_2$ acting on $\det(t^{-\theta}\Lambda_0:\Lambda_1)$.

Let $V=\oplus_nV_n$ be the grading according to the action of $\theta$: $V_n$ is the direct sum of weight spaces $V_{\chi^\vee}$ such that $\langle\chi^\vee,\theta\rangle=n$. Then the lattice $\Lambda_1=\oplus_nt^{-[n/2]}\calO\otimes_{\CC}V_n$ is stable under the $\SL(2)$-action. On the other hand, the lattice $t^{-\theta}\Lambda_0=\oplus_nt^{-n}\calO\otimes_{\CC}V_n$. Therefore the action of $B_2$ on $\det(t^{-\theta}\Lambda_0:\Lambda_1)$ is via a character whose pairing with $\theta$ is:
\begin{equation}\label{degLV}
\sum_{n}n(n-[\dfrac{n}{2}])\dim V_n=\sum_n\dfrac{n^2}{2}\dim V_n=\dfrac{1}{2}\sum_{\chi\in\weight(V)}\dim V_{\chi^\vee}\cdot\langle\chi^\vee,\theta\rangle^2.
\end{equation}
Here the first equality follows from the fact that $\dim V_n=\dim V_{-n}$. As a well-known fact for line bundles on $\PP^1=\SL(2)/B_2$, the number in \eqref{degLV} is the degree of restriction of the line bundle $\calL^\phi_{\Lambda_1}$ to the $\SL(2)$-orbit through $t^{-\theta}$, hence the degree of $\calL^\phi_{\Lambda_1}$ on the generating cycle of $H_2(\Grass^0_G)$ (here $\Lambda_1$ can be any reference lattice, because different choices of $\Lambda_1$ give rise to isomorphic line bundles).
\end{proof}


\begin{remark}The lemma shows that some power of $\calL_\det$ admits a natural $G(\calO)$-equivariant structure (the
$G(\calO)$-equivariant structure is necessarily unique if $G$ is almost simple). However, $\calL_\det$ itself may not have a
$G(\calO)$-equivariant structure. Assume that under the isomorphism
$\Pic(\Grass_G^0)\cong\ZZ$, the image of the natural forgetful map
$\Pic^T(\Grass_G^0)\to\Pic(\Grass_G^0)$ is $n\ZZ\subset\ZZ$. Then
the number $n$ coincides with the number $n_G$ which will be introduced in Remark
\ref{nG}.
\end{remark}


\section{The regular element $e^T$}\label{first Chern class}
We continue using the notations introduced in the previous section. Since $\calL^\phi_{\Lambda_0}\cong\calL^{\otimes d_V}_{\det}$ admits a canonical $G$-equivariant structure (hence a canonical $T$-equivariant structure), we can take its equivariant Chern class $c^T_1(\calL^\phi_{\Lambda_0})\in H^2(\Grass_G)$. 
\begin{lemma}
The element $c^T_1(\calL^\phi_{\Lambda_0})$ is primitive in the Hopf algebra $H^*_T(\Grass_G)$.
\end{lemma}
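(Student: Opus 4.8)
The plan is to show that the coproduct $\Delta$ on $H^*_T(\Grass_G)$ sends $c^T_1(\calL^\phi_{\Lambda_0})$ to $c^T_1(\calL^\phi_{\Lambda_0})\otimes 1 + 1\otimes c^T_1(\calL^\phi_{\Lambda_0})$. Recall from the proof of Proposition \ref{p:multact} that $\Delta = \Sp^* m_G^*$, and that $\Delta$ is geometrically induced by the multiplication map $m_K:\Omega K\times\Omega K\to\Omega K$ (equivalently, by $m_G:\twG\to\Grass_G$ together with the co-specialization isomorphism $\Sp^*$). So it suffices to understand the pullback of the line bundle $\calL^\phi_{\Lambda_0}$, with its $T$-equivariant structure, under $m_G$, and to check that the pullback class is the external sum.

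First I would identify, following \S\ref{line bundles}, the line bundle $\calL^\phi_{\Lambda_0}$ on the convolution space $\twG$ which is the ``relative'' version of $\calL^\phi_{\Lambda_0}$: a point of $\twG$ is a pair of lattices $\Lambda_0 \supset \Lambda' \supset \Lambda''$ (after applying $\phi$ and up to the appropriate identifications), or more precisely a pair $(g_1 G(\calO), g_1 g_2 G(\calO))$, and $m_G$ records the composite. There is a natural isomorphism of line bundles on $\twG$
\[
m_G^*\calL^\phi_{\Lambda_0}\cong \mathrm{pr}_1^*\calL^\phi_{\Lambda_0}\otimes \widetilde{\mathrm{pr}}_2^*\calL^\phi_{\Lambda_0},
\]
coming from the elementary identity $\det(\Lambda'':\Lambda_0) = \det(\Lambda'':\Lambda')\otimes\det(\Lambda':\Lambda_0)$ on determinant lines, where $\mathrm{pr}_1$ is the first projection $\twG\to\Grass_G$ and $\widetilde{\mathrm{pr}}_2$ is the twisted second projection. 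This identification respects the $G(\calO)$- (hence $T$-) equivariant structures, since all the equivariant structures are defined by the tautological action on determinant lines. Taking $c^T_1$ and using that $c_1^T$ is additive for tensor products of equivariant line bundles, we get $m_G^*c_1^T(\calL^\phi_{\Lambda_0}) = \mathrm{pr}_1^*c_1^T(\calL^\phi_{\Lambda_0}) + \widetilde{\mathrm{pr}}_2^*c_1^T(\calL^\phi_{\Lambda_0})$ in $H^2_T(\twG)$.

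Next I would feed this through the co-specialization isomorphism $\Sp^*:H^*_T(\twG)\isom H^*_T(\Grass_G\times\Grass_G)$ of \eqref{sph}. Because $\Sp^*$ arises from a family over $X^2$ that is constant in the sense used in the proof of Proposition \ref{p:multact} — concretely, via the map $\tiota$ and the fact that $\mathrm{pr}_1$, $\widetilde{\mathrm{pr}}_2$ extend to the two projections $\twt\to\Grass_X$ which over $X^2-\Delta$ become the ordinary projections $\Grass_G\times\Grass_G\to\Grass_G$ — the classes $\mathrm{pr}_1^*c_1^T(\calL^\phi_{\Lambda_0})$ and $\widetilde{\mathrm{pr}}_2^*c_1^T(\calL^\phi_{\Lambda_0})$ are carried by $\Sp^*$ to $c_1^T(\calL^\phi_{\Lambda_0})\otimes 1$ and $1\otimes c_1^T(\calL^\phi_{\Lambda_0})$ respectively. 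Combining with the previous paragraph gives $\Delta(c_1^T(\calL^\phi_{\Lambda_0})) = \Sp^*m_G^*c_1^T(\calL^\phi_{\Lambda_0}) = c_1^T(\calL^\phi_{\Lambda_0})\otimes 1 + 1\otimes c_1^T(\calL^\phi_{\Lambda_0})$, which is exactly the assertion that the class is primitive.

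The main obstacle I expect is the second step: making precise the ``relative'' determinant line bundle on the convolution Grassmannian and its globalization over $\twt$, and checking that it restricts correctly over the diagonal and the complement so that the co-specialization map does what we want. The determinant-line identity itself is routine once the lattices are set up, and the compatibility with $T$-equivariant structures is automatic; the care is entirely in organizing the spreading-out over $X^2$ so that one may invoke the naturality of co-specialization exactly as in \cite[\S 6]{MV} and in the proof of Proposition \ref{p:multact}. An alternative, possibly cleaner, route — which I would mention as a remark — is to argue purely topologically: $c_1^T(\calL^\phi_{\Lambda_0})$ lies in $H^2_T(\Grass_G) = H^2_{T\cap K}(\Omega K)$, and a degree-$2$ class in the cohomology of an $H$-space is primitive iff it is pulled back from the classifying space / iff the corresponding map to $K(\ZZ,2)$ is a homomorphism up to homotopy; since line bundles and their Chern classes are compatible with the $H$-space multiplication $m_K$ (which on $\Omega K = \Omega_{\pol}K$ is pointwise product), primitivity follows. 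But the sheaf-theoretic argument above is the one that fits the paper's framework and generalizes to the non-equivariant case.
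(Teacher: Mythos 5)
Your main argument is correct in spirit but takes a genuinely different (and longer) route from the paper's. The paper works directly on $\Omega K\times\Omega K$ without any convolution or co-specialization machinery: it simply observes that the $T\cap K$-equivariant line bundle $m_K^*\calL^\phi_{\Lambda_0}$ restricts to $\calL^\phi_{\Lambda_0}$ on each of the two axes $\Omega K\times\{*\}$ and $\{*\}\times\Omega K$ (because $m_K$ composed with either axis inclusion is the identity), and since an equivariant line bundle on $\Grass_G\times\Grass_G$, with its equivariant structure, is determined by its restrictions to the axes, this already yields $m_K^*\calL^\phi_{\Lambda_0}\cong p_1^*\calL^\phi_{\Lambda_0}\otimes p_2^*\calL^\phi_{\Lambda_0}$ and hence primitivity of $c_1^T(\calL^\phi_{\Lambda_0})$. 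Your argument instead works on the convolution space $\twG$, proves $m_G^*\calL^\phi_{\Lambda_0}\cong\mathrm{pr}_1^*\calL^\phi_{\Lambda_0}\otimes\widetilde{\mathrm{pr}}_2^*\calL^\phi_{\Lambda_0}$ via the determinant-line cocycle, and then transports this through the co-specialization isomorphism using $\Delta=\Sp^*m_G^*$ from Proposition~\ref{p:multact}. That is a valid route, but the globalization over $X^2$ that you flag as the main obstacle is genuinely needed and slightly subtler than your sketch suggests: there is no global second projection $\twt\to\Grass_X$, so $\widetilde{\mathrm{pr}}_2^*\calL^\phi_{\Lambda_0}$ must be globalized as a relative determinant line bundle on $\twt$ rather than as a pullback. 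That is exactly the extra work the paper's two-line topological argument avoids by computing $\Delta$ directly from $m_K$. The ``$H$-space'' alternative you mention at the end is, up to phrasing, the paper's actual proof, so you had the intended argument in hand; the sheaf-theoretic version has the independent merit of exhibiting the multiplicativity of the determinant line bundle on the convolution diagram, which is more than this lemma needs.
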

\begin{proof}
Let $m_K:\Omega K\times\Omega K\to\Omega K$ be the multiplication. Then the $T\cap K$-equivariant line bundle $m_K^*(\calL^\phi_{\Lambda_0})$ restricts to $\calL^\phi_{\Lambda_0}$ on both $\Omega K\times\{*\}$ and $\{*\}\times\Omega K$ ($\{*\}$ is the unit element in $\Omega K$). This forces $c^T_1(\calL^\phi_{\Lambda_0})$ to be primitive. Therefore $c^T_1$ is also primitive.
\end{proof}

\begin{cor} The element
\begin{equation*}
c^T_1=\dfrac{1}{d_V}c^T_1(\calL^\phi_{\Lambda_0})\in H^2_T(\Grass_G,\QQ)
\end{equation*}
is independent of the choice of the representation $\phi:G\to\GL(V)$.
\end{cor}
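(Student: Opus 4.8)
The plan is to show the rational class $c^T_1 = \frac{1}{d_V} c^T_1(\calL^\phi_{\Lambda_0})$ does not depend on $V$ by comparing it to an intrinsically defined class. The key observation is that $\calL_{\det}$ itself (ignoring equivariant structures) is canonically defined, being the positive generator of $\Pic$ on each component, and by Lemma \ref{l:deg} we have $\calL^\phi_{\Lambda_0} \cong \calL_{\det}^{\otimes d_V}$ in $\Pic(\Grass_G)$. Hence the \emph{non-equivariant} Chern classes satisfy $c_1(\calL^\phi_{\Lambda_0}) = d_V \cdot c_1(\calL_{\det})$ in $H^2(\Grass_G)$, so $\frac{1}{d_V} c_1(\calL^\phi_{\Lambda_0}) = c_1(\calL_{\det})$ is manifestly independent of $V$. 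The issue is purely that $\calL_{\det}$ need not carry a $T$-equivariant structure integrally, so one cannot directly form $c^T_1(\calL_{\det})$; but rationally (or after inverting $n_G$) one can.

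First I would pass to rational coefficients and recall that the forgetful map $H^*_T(\Grass_G,\QQ) \to H^*(\Grass_G,\QQ)$ fits into the exact sequence coming from the fibration $\Grass_G \to (\Grass_G)_{hT} \to BT$, and that $H^2_T(\Grass_G,\QQ)$ surjects onto $H^2(\Grass_G,\QQ)$ with kernel the image of $H^2(BT,\QQ) = \xch(T)\otimes\QQ$. So $c^T_1(\calL^\phi_{\Lambda_0})$ is determined up to an element of $\xch(T)\otimes\QQ$ by its image $c_1(\calL^\phi_{\Lambda_0}) = d_V\, c_1(\calL_{\det})$. Dividing by $d_V$, the class $c^T_1 = \frac{1}{d_V}c^T_1(\calL^\phi_{\Lambda_0})$ has non-equivariant image $c_1(\calL_{\det})$, which is independent of $V$; so the difference $c^T_1(V) - c^T_1(V')$ for two representations $V, V'$ lies in $\xch(T)\otimes\QQ \subset H^2_T(\Grass_G,\QQ)$. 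It remains to show this difference is zero.

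To kill the ambiguity I would use the equivariant localization/restriction at the $T$-fixed points $t^\lambda$. From the explicit formula \eqref{reschern}, the restriction of $c^T_1(\calL^\phi_{\Lambda_0})$ to $t^\lambda$ is $-\sum_{\chi^\vee\in\weight(V)} \dim V_{\chi^\vee} \langle\chi^\vee,\lambda\rangle\,\chi^\vee \in \xch(T)$. Now observe that $\sum_{\chi^\vee} \dim V_{\chi^\vee}\,\chi^\vee\otimes\chi^\vee$, viewed as an element of $\Sym^2(\xch(T))^W$, is precisely the second symmetric power invariant attached to $V$; by Lemma \ref{l:deg} its pairing diagonally against $\theta$ is $2d_V$, and more generally, since for $G^{der}$ almost simple the space of $W$-invariant quadratic forms on $\xcoch(T)\otimes\QQ$ is one-dimensional (spanned by $(-,-)_{Kil}$), this symmetric tensor equals $d_V \cdot Q$ for a fixed $W$-invariant form $Q$ independent of $V$, at least on the derived part; one handles the central directions separately using that $\calL^\phi_{\Lambda_0}$ is pulled back along $\Grass_{\phi}$ and the central torus acts compatibly. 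Hence $\frac{1}{d_V}\times$(restriction of $c^T_1(\calL^\phi_{\Lambda_0})$ to $t^\lambda$) is the linear functional $\mu^\vee \mapsto -Q(\lambda,\mu^\vee)$, independent of $V$. Since the rational equivariant cohomology of each $\Grass_{\leq\lambda}$ injects into the product of its stalks at the $T$-fixed points (GKM/localization theorem, $T$ acts with isolated fixed points on each $\Grass_{\leq\lambda}$ with finitely many one-dimensional orbits), the class $c^T_1$ is determined by these restrictions, so $c^T_1$ is independent of $V$.

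The main obstacle is the last step: verifying that the $W$-invariant quadratic form $\sum_{\chi^\vee} \dim V_{\chi^\vee}\,\chi^\vee\otimes\chi^\vee$ is a $V$-independent multiple of a fixed form, and correctly tracking the central torus directions where $Q$ degenerates. This is where almost-simplicity of $G^{der}$ enters essentially (uniqueness of the invariant form up to scalar on the semisimple part), and one must be careful that the normalization by $d_V$ — which was computed in Lemma \ref{l:deg} purely from the $\theta$-direction — indeed matches the normalization of the full quadratic form, which follows because a $W$-invariant form on an irreducible root system is determined by its value on a single long coroot. Once this is in place, the localization injectivity (valid rationally on each finite-dimensional $\Grass_{\leq\lambda}$, then passing to the limit) gives the conclusion immediately.
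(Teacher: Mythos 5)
Your approach is genuinely different from the paper's. Having just shown (in the preceding Lemma) that $c^T_1(\calL^\phi_{\Lambda_0})$ is primitive in the Hopf algebra $H^*_T(\Grass_G)$, the paper argues in one line: two primitives of $H^2_T(\Grass_G,\QQ)$ with the same image in $H^2(\Grass_G,\QQ)$ must agree, because their difference is a primitive lying in the kernel of the forgetful map. You instead localize at the $T$-fixed points and invoke the near-uniqueness of $W$-invariant quadratic forms. Your route is longer but makes the Killing form and the role of almost-simplicity visible already at this stage; it is essentially a preview of the computations in Propositions \ref{p:e0} and \ref{p:f}.

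The ``central directions'' issue you flag at the end is, however, a genuine gap, and it is in fact shared by the paper. When $Z^0(G)\neq 1$ (e.g.\ $G=\GL_n$), the ind-scheme $\Grass_G$ is disconnected with $\pi_0(\Grass_G)=\xcoch(T)/\ZZ\Phi$ of positive rank, so the kernel of $H^2_T(\Grass_G,\QQ)\to H^2(\Grass_G,\QQ)$ is $H^0(\Grass_G)\otimes\xch(T)\otimes\QQ$, not merely $\xch(T)\otimes\QQ$; this kernel contains the nonzero primitive subspace $\Hom(\pi_1(K),\xch(T)\otimes\QQ)$, so the paper's uniqueness assertion fails. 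Correspondingly, $\sum_{\chi^\vee}\dim V_{\chi^\vee}\,\chi^\vee\otimes\chi^\vee$ is a $V$-independent multiple of the Killing form only after restriction to the coroot span; its central block genuinely depends on $V$ and cannot be ``handled separately.'' For instance, for $G=\GL_2$ at the central fixed point $t^{\lambda}$ with $\lambda=(1,1)$, formula \eqref{reschern} gives $i^*_{\lambda}\bigl(\tfrac{1}{d_V}c^T_1(\calL^\phi_{\Lambda_0})\bigr)=-(\epsilon_1^\vee+\epsilon_2^\vee)$ for $V=\CC^2$ but $0$ for $V=\Ad$, so the normalized classes differ and the Corollary fails as literally stated once $G$ is not semisimple. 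Both your argument and the paper's are correct for $G$ semisimple, and in general the discrepancy is a shift of $e^T$ by an element of the centre $\frt^\vee_Z\otimes R_T$ of $\frg^\vee\otimes R_T$, which leaves $B^\vee_{e^T}$, and hence Theorem \ref{th:main}, unaffected.
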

\begin{proof}
For different choices of $(\phi,V)$, the classes $\frac{1}{d_V}c^T_1(\calL^\phi_{\Lambda_0})$ have the same image $c_1(\calL_{\det})$ in $H^2(\Grass_G)$. However, there can be at most one primitive element in $H^2_T(\Grass_G,\QQ)$ with a given image in $H^2(\Grass_G,\QQ)$. 
\end{proof}

\subsection{The element $e^T$} For each object $\calF\in\calP_{\QQ}$, cup product with $c^T_1$ induces a functorial map
\[
\cup c^T_1:H^*_{T}(\calF)\to H^{*+2}_T(\calF).
\]
By Proposition \ref{p:multact}, the fact that $c^T_1$ is primitive implies that for any $\calF_1,\calF_2\in\calP_{\QQ}$, there is a functorial commutative diagram
\[
\xymatrix{H^*_T(\calF_1)\otimes_{R_T}H^*_T(\calF_2)\ar[r]^(0.6){\sim}\ar[d]^{c^T_1\otimes\id+\id\otimes c^T_1} & H^*_T(\calF_1*\calF_2)\ar[d]^{c^T_1}\\
H^*_T(\calF_1)\otimes_{R_T}H^*_T(\calF_2)[2]\ar[r]^(0.6){\sim} &
H^*_T(\calF_1*\calF_2)[2].}
\]

In general, let $c$ is an endomorphism of the functor $\omega_R:\Rep(G^\vee)\to\Mod(R)$ (the composition of the forgetful functor with the tensor product $\otimes R$) satisfying $c_{V_1\otimes V_2}=c_{V_1}\otimes\id_{V_2}+\id_{V_1}\otimes c_{V_2}$, then $c$ determines an element $e\in\frg^\vee\otimes R$. In fact, Consider the fiber functor $\omega_{R,\epsilon}:\Rep(G^\vee)\to\Mod(R[\epsilon]/\epsilon^2)$ sending $V\mapsto V\otimes R[\epsilon]/\epsilon^2=V\otimes R\oplus V\otimes R\epsilon$. The maps
\begin{equation}\label{2by2}
\left( \begin{array}{cc}
\id_V & 0 \\
c_V & \id_V \end{array} \right):V\otimes R\oplus V\otimes R\epsilon\to V\otimes R\oplus V\otimes R\epsilon
\end{equation}
give a tensor automorphism of $\omega_{R,\epsilon}$, hence an element $e\in G^\vee(R[\epsilon]/\epsilon^2)$. Since the induced tensor automorphism of $\omega_R$ (via $R[\epsilon]/\epsilon^2\to R$) is the identity, $e$ lies in the kernel of the reduction map $G^\vee(R[\epsilon]/\epsilon^2)\to G^\vee(R)$, which is $\frg^\vee\otimes R$.

Applying the above discussion to the functor $H^*_T:\calP\to\Mod(R_T\otimes\QQ)$ (which can be identified with the functor $\omega_{R_T\otimes\QQ}:\Rep(G^\vee)\to\Mod(R_T\otimes\QQ)$ by Lemma \ref{l:freeT}) and $c=c^T_1$ , we conclude that $c^T_1$ gives an element $e^T\in\frg^\vee\otimes R_T\otimes\QQ$.

\begin{remark}
More generally, suppose $\omega':\Rep(G^\vee)\to\Mod(R)$ is a fiber functor and $c'$ is an endomorphism of $\omega'$ satisfying $c'_{V_1\otimes V_2}=c'_{V_1}\otimes\id_{\omega'(V_2)}+\id_{\omega'(V_1)}\otimes c'_{V_2}$. Then $\omega'$ determines a $G^\vee$-torsor $\calE$ over $\Spec R$. Let $\delta:\Spec R[\epsilon]/\epsilon^2\to\Spec R$ be the structure map and let $\delta^*\calE$ be the pull back $G^\vee$-torsor to $\Spec R[\epsilon]/\epsilon^2$. Then the same construction as in \eqref{2by2} determines an automorphism of $\delta^*\calE$ whose restriction to the closed subscheme $\Spec R\hookrightarrow \Spec R[\epsilon]/\epsilon^2$ is the identity. Spelling these out, we see that $c'$ determines a Higgs field of the $G^\vee$-torsor $\calE$, i.e., a global section $e'\in\Gamma(\Spec R,\Ad(\calE))$ where $\Ad(\calE)=\calE\twtimes{G^\vee}\frg^\vee$ is the adjoint bundle. In other words, the pair $(\omega',c')$ determines an $R$-point $(\calE,e')$ of the stack $[\frg^\vee/G^\vee]$.

Applying the above discussion to the functor $H^*_G(-)$ (which determines the torsor $\calE$ by Remark \ref{r:torsorE}) and $c^G_1(\calL)$ for any $G$-equivariant line bundle $\calL$ on $\Grass_G$, we get a commutative diagram:
\[
\xymatrix{\Spec R_T\ar[rr]^{c^T_1(\calL)}\ar[d] & & \frg^\vee\ar[d]\\
\Spec R_G\ar[rr]^{(H^*_G(-),c_1^G(\calL))} && [\frg^\vee/G^\vee]}
\]
\end{remark}

In the remaining part of this section, we determine the element $e^T$ explicitly.

We give an (even) grading on $\frg^\vee=\oplus_{n\in\ZZ}\frg^\vee_{2n}$ according to the action of $\GG_m$ via the conjugation of $2\rho^\vee:\GG_m\to T^\vee$:
\[
\frg^\vee_{2n}=\bigoplus_{\alpha\in\Phi,\langle2\rho^\vee,\alpha\rangle=2n}\frg^\vee_{\alpha}.
\]
We give $\frg^\vee\otimes R_T$ the tensor product (even) grading. The fact that $\cup c^T_1$ increases the cohomological degree by $2$ implies that $e^T$ is homogeneous of degree $2$ in $\frg^\vee\otimes R_T$, i.e.,
\[
e^T\in\bigoplus_{n\in\ZZ}\frg^\vee_{2(1-n)}\otimes\Sym_n(\xch(T))\otimes\QQ.
\]

\begin{lemma}
\begin{equation}\label{eT}
e^T\in\frg^\vee_2\otimes\QQ\oplus\frt^\vee\otimes\xch(T)\otimes\QQ.
\end{equation}
\end{lemma}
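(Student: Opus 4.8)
The plan is to exploit the grading constraints already established, together with the known Chern class computation of the previous section, to pin down in which graded pieces $e^T$ can live. We already know from the displayed inclusion just before the statement that
\[
e^T\in\bigoplus_{n\geq 0}\frg^\vee_{2(1-n)}\otimes\Sym_n(\xch(T))\otimes\QQ,
\]
so the terms to control are those with $n\geq 2$, i.e.\ the components lying in $\frg^\vee_{2(1-n)}\otimes\Sym_n(\xch(T))\otimes\QQ$ with $1-n\leq -1$. First I would observe that the $n=0$ component lies in $\frg^\vee_2\otimes\QQ$ and the $n=1$ component lies in $\frg^\vee_0\otimes\xch(T)\otimes\QQ=\frt^\vee\otimes\xch(T)\otimes\QQ$, which are exactly the two summands allowed in \eqref{eT}; so the content is the vanishing of all $n\geq 2$ components.

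The key idea is to compare $e^T$ with its non-equivariant specialization and with the non-equivariant element $e$ (the image of $c_1(\calL_\det)$), using the naturality of everything in sight. Setting $R_T\to\ZZ$ (equivalently, killing the augmentation ideal of $R_T$, which is the same as taking $\xch(T)\to 0$) recovers the non-equivariant functor $H^*(-)$ and the non-equivariant class $c_1$; hence the image of $e^T$ under $\frg^\vee\otimes R_T\otimes\QQ\to\frg^\vee\otimes\QQ$ is $e$, and $e\in\frg^\vee_2$ by the degree-$2$ homogeneity (this is the non-equivariant shadow of Ginzburg's statement, which in this paper is being re-derived from the Chern class computation in \S\ref{first Chern class}: the class $c_1(\calL_\det)$ acts on each weight space $H^*_c(S_\mu,\calF)$ by a scalar depending linearly on $\mu$, which forces the associated Lie algebra element to lie in $\frg^\vee_2\oplus\frt^\vee$, and after specialization only $\frg^\vee_2$ survives). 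The real work is therefore to show that the ``higher'' equivariant corrections vanish. For this I would use the localization description in Remark \ref{r:loc}: the composition of $e^T$ with the projection $\frg^\vee\to\frg^\vee/\frb^\vee$ (equivalently the statement that $\tsigma^T$ lands in $B^\vee$, Proposition \ref{p:ZU}(1)) shows $e^T\in\frb^\vee\otimes R_T\otimes\QQ$, and then the projection to $\frt^\vee$ is computed by $\bar\sigma_{can}$, hence by the equivariant localization map $Loc_*$ of \eqref{homloc}; since the cup action of $H^*_T(\Grass_G)$ on the weight functor $H^*_{T,c}(S_\lambda,-)$ factors through $i_\lambda^*:H^*_T(\Grass_G)\to H^*_T(\{t^\lambda\})$, and $c^T_1(\calL^\phi_{\Lambda_0})$ restricts to $t^\lambda$ as the explicit \emph{linear-in-$\lambda$} class \eqref{reschern}, the $\frt^\vee$-part of $e^T$ is a class of degree exactly $2$ in $R_T\otimes\frt^\vee$, i.e.\ lies in $\frt^\vee\otimes\xch(T)\otimes\QQ$ with no $\Sym_{\geq 2}$ contributions.

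It then remains to rule out the components in $\frg^\vee_{2(1-n)}\otimes\Sym_n(\xch(T))\otimes\QQ$ for $n\geq 2$ lying in the \emph{strictly negative} graded pieces $\frg^\vee_{<0}$, i.e.\ in $\fru^{\vee,-}$, the nilpotent radical of the opposite Borel. Here I would argue as follows: write $e^T=e^T_+ + e^T_0 + e^T_-$ according to $\frg^\vee=\fru^\vee\oplus\frt^\vee\oplus\fru^{\vee,-}$ (tensored with $R_T\otimes\QQ$). We have just seen $e^T_-=0$ because $\tsigma^T$ factors through $B^\vee$ (Proposition \ref{p:ZU}(1)) — so actually the only components that could be problematic are those in $\frg^\vee_{\geq 2}$ with higher $\Sym$-degree, but the grading bound forces $2(1-n)\leq -2<0$ for $n\geq 2$, so all such components are automatically in $\frg^\vee_{<0}=\fru^{\vee,-}$, and these are killed by the same $B^\vee$-factorization. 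Combining: the grading bound restricts $e^T$ to $\bigoplus_{n\geq 0}\frg^\vee_{2(1-n)}\otimes\Sym_n(\xch(T))\otimes\QQ$; the $B^\vee$-factorization (equivalently, preservation of the MV-filtration by $\sigma_{can}$, Proposition \ref{p:ZU}(1)) kills every piece with $2(1-n)<0$, i.e.\ every $n\geq 2$; and what survives is precisely $\frg^\vee_2\otimes\QQ$ (the $n=0$ piece) together with $\frt^\vee\otimes\xch(T)\otimes\QQ$ (the $n=1$ piece), which is \eqref{eT}.

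I expect the main obstacle to be bookkeeping the interaction between the cohomological grading (which gives the $n$-degree constraint) and the $\GG_m$-weight grading on $\frg^\vee$ coming from $2\rho^\vee$: one must check that the two gradings on $e^T$ are forced to agree up to the shift recorded in the pre-statement display, so that ``$e^T$ homogeneous of cohomological degree $2$'' really does translate into the $\GG_m$-weight of the $\frg^\vee$-component of the $\Sym_n$-part being $2(1-n)$. Once that compatibility is in hand, the vanishing of the $n\geq 2$ pieces is immediate from Proposition \ref{p:ZU}(1), since those pieces would have to live in the negatively-graded part $\fru^{\vee,-}$ on which $\tsigma^T$ is trivial.
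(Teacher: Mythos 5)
Your approach is essentially the paper's: combine the $\frb^\vee$-containment of $e^T$ with the cohomological-degree-2 homogeneity constraint, observe that $\frb^\vee=\bigoplus_{k\geq 0}\frg^\vee_{2k}$ is the non-negatively graded part of $\frg^\vee$, and conclude that only the $n=0$ and $n=1$ summands in $\bigoplus_n\frg^\vee_{2(1-n)}\otimes\Sym_n(\xch(T))$ can survive. That is exactly how the paper argues, and your final paragraph pins down the correct structural reason.

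One point of care, though: you derive $e^T\in\frb^\vee\otimes R_T\otimes\QQ$ from Proposition \ref{p:ZU}(1), writing that the claim is ``equivalently the statement that $\tsigma^T$ lands in $B^\vee$.'' This is not an equivalence. Proposition \ref{p:ZU}(1) concerns the group-scheme morphism $\tsigma^T$, i.e.\ the Tannakian incarnation of the automorphism $\sigma_{can}$, while $e^T$ is the Tannakian incarnation of the tensor \emph{endomorphism} $\cup\,c_1^T$ via the $R[\epsilon]/\epsilon^2$ construction. That $\tsigma^T$ factors through $B^\vee$ does not formally give $e^T\in\frb^\vee$; what one needs is the parallel (and separately verified) fact that $\cup\,c_1^T$ preserves the $T$-equivariant MV-filtration on $H^*_T(\calF)$. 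The paper makes precisely this observation its one-line justification. Your underlying geometric idea is the same, but the citation should be to the filtration-preserving property of $c_1^T$, not to Proposition \ref{p:ZU}(1).

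Also, the middle block of your argument — specializing to the non-equivariant $e$, invoking equivariant localization, and asserting linearity-in-$\lambda$ of the restriction of $c_1^T$ to $t^\lambda$ — is not needed for this lemma and recreates material from Propositions \ref{p:e0}--\ref{p:f}; moreover the parenthetical claim that $c_1(\calL_\det)$ ``acts on each weight space $H^*_c(S_\mu,\calF)$ by a scalar'' is inaccurate in the non-equivariant setting (cup product with a degree-2 class moves between weight spaces rather than acting diagonally). None of this affects the final, correct argument, but the lemma can be proved without it.
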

\begin{proof}
The action of $\cup c_1^T$ preserves the $T$-equivariant MV-filtration on $H^*_T(\calF)$ which gives the canonical $B^\vee$-reduction of the trivial $G^\vee$-torsor $\calE|_{\Spec R_T}$ (see Remark \ref{r:torsorE}). Hence, $e^T$ is an $R_T\otimes\QQ$-valued element of the Lie algebra $\frb^\vee$. Since $e^T$ is homogeneous of degree 2 in $\frb^\vee\otimes R_T\otimes\QQ$, it must take the form \eqref{eT}.
\end{proof}

Write $e^T=e+f$ where $e\in\frg^\vee_2\otimes\QQ$ and $f\in\frt^\vee\otimes\xch(T)\otimes\QQ$.

We first calculate $e$. Clearly, $e$ is obtained from the endomorphism of the functor $H^*:\calP\to\Mod(\ZZ)$ given by $\cup c_1(\calL_{\det})$, the ordinary Chern class of $\calL_{\det}$. In particular, $e\in\frg^\vee$. For each simple root $\alpha_i$ of $\frg^\vee$, let $x_i$ be the element of a Chevalley basis of $\frg^\vee$ in the root space $\frg^\vee_{\alpha_i}$. Then $e$ is an integral combination of the $x_i$'s.

\begin{prop}\label{p:e0} Up to changing the Chevalley basis elements $\{x_i\}$ by signs, we have
\begin{equation}\label{e0}
e=\sum_{i}|\alpha_i|^2x_i
\end{equation}
where $|\ |^2$ is a $W$-invariant quadratic form on $\xcoch(T)$ normalized so that short coroots (of $G$) have length one (hence $|\alpha_i|^2=1,2$ or $3$).
\end{prop}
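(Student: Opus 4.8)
The plan is to compute $e$ by reducing to rank-one sub-data. The element $e$ lies in $\fru^\vee$ and, since $\cup c_1(\calL_{\det})$ increases cohomological degree by $2$, it is homogeneous of degree $2$ for the $2\rho^\vee$-grading; hence $e = \sum_i c_i x_i$ for integers $c_i$ (after fixing signs on the Chevalley generators). To pin down $c_i$ I would detect it using a single fundamental coweight. Recall from \S\ref{relHtodual} that the weight functors $H^*_c(S_\mu,-)$ are the graded pieces of the MV-filtration, and (Remark \ref{r:loc}) that cup product by $H^*(\Grass_G)$ on these graded pieces is given by restriction to the fixed points $t^\mu$. So the off-diagonal matrix entry of $\cup c_1(\calL_{\det})$ from the $\mu$-weight space to the $(\mu+\alpha_i)$-weight space, inside an irreducible $V\in\Rep(G^\vee)$ realized as $H^*(\calF_\lambda)$, equals (up to the comparison of local versus global Chern classes) the difference $i_\mu^*c_1(\calL_{\det}) - i_{\mu+\alpha_i}^*c_1(\calL_{\det})$ measured against the appropriate basis vector of the (one-dimensional) weight spaces. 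By \eqref{reschern} and Lemma \ref{l:deg}, $i_\mu^*c_1^T(\calL^\phi_{\Lambda_0}) = -\sum_{\chi^\vee}\dim V_{\chi^\vee}\langle\chi^\vee,\mu\rangle\chi^\vee$, and dividing by $d_V$ and comparing with the quadratic form normalization will produce the factor $|\alpha_i|^2$.

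Concretely, I would carry this out as follows. First, fix $i$ and choose the minuscule or quasi-minuscule picture attached to $\alpha_i$: take the $\SL(2)$-triple $(x_{\theta_i^\vee},\alpha_i,x_{-\theta_i^\vee})$ for the coroot $\alpha_i$ of $G$ (i.e., the simple root $\alpha_i^\vee$ of $G^\vee$), restrict the Satake category along the corresponding rank-one subgroup, and reduce to computing the analogous Chern-class operator on $\Grass_{\SL(2)}$ or $\Grass_{\mathrm{PGL}(2)}$, where everything is explicit. Second, compare the ordinary Chern class $c_1(\calL_{\det})$ with the equivariant one $c_1^T(\calL^\phi_{\Lambda_0})/d_V$ of \S\ref{first Chern class}, whose fixed-point restrictions are given by \eqref{reschern}; using Remark \ref{r:loc} this identifies the matrix coefficient of $\cup c_1(\calL_{\det})$ along $\alpha_i$ with a ratio of pairings of weights of $V$ with the coroot $\alpha_i$. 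Third, evaluate that ratio: the numerator involves $\langle\chi^\vee,\mu\rangle$-type data for weights $\chi^\vee$ with $\chi^\vee = \mu$ and $\chi^\vee=\mu+\alpha_i$, the denominator is $d_V = \tfrac12\sum\dim V_{\chi^\vee}\langle\chi^\vee,\theta\rangle^2$, and after unwinding one recognizes the $W$-invariant form $|\cdot|^2$ normalized to give short coroots length $1$, so that $c_i = |\alpha_i|^2 \in\{1,2,3\}$. Finally, absorb the signs of the various basis identifications into the choice of Chevalley generators $x_i$, which is harmless since the statement is only up to such signs.

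The main obstacle I expect is the bookkeeping of normalizations and signs: matching the global first Chern class $c_1(\calL_{\det})$ (the positive generator of $\Pic$ on each component) against the equivariant class $c_1^T(\calL^\phi_{\Lambda_0})$ and its fixed-point formula \eqref{reschern}, keeping the factor $d_V$ from Lemma \ref{l:deg} straight, and checking that the resulting coefficient is exactly $|\alpha_i|^2$ and not, say, $|\alpha_i|^2$ times a global constant. A secondary subtlety is that in the non-simply-laced case the relevant weight spaces of $V$ need not be one-dimensional unless $V$ is chosen carefully (e.g. quasi-minuscule or a suitable fundamental representation), so part of the work is selecting, for each $i$, a representation $V$ of $G^\vee$ in which the $\alpha_i$-string through a convenient weight is short enough that the computation is genuinely a rank-one $\SL(2)$ calculation. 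Once those normalizations are fixed, the identity \eqref{e0} falls out of the explicit rank-one computation.
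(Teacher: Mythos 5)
Your overall plan is in the same spirit as the paper's proof---both approaches ultimately reduce the computation of the coefficient of $x_i$ in $e$ to a degree computation of $\calL_{\det}$ on a $\PP^1\cong\phi_i(\SL(2))\cdot t^{-\lambda_i}$, and both invoke Lemma \ref{l:deg} and \eqref{reschern} for the adjoint representation to evaluate that degree as a ratio of Killing pairings. The discovery that the answer is $|\alpha_i|^2$ (once $\theta$ is known to be a short coroot, so $(\theta,\theta)_{Kil}$ is the minimal length-squared) is the same in both.

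However, the central step in your proposal has a genuine gap. You claim that the matrix entry of $\cup c_1(\calL_{\det})$ from the $\mu$-weight space to the $(\mu+\alpha_i)$-weight space of $H^*(\calF)$ equals (up to normalization) the difference of the fixed-point restrictions $i_\mu^*c_1^T - i_{\mu+\alpha_i}^*c_1^T$, citing Remark \ref{r:loc}. But Remark \ref{r:loc} only describes the \emph{diagonal} part of the action---cup product on the associated graded pieces $H^*_{T,c}(S_\mu,-)$ of the MV-filtration, which factors through $i_\mu^*$---not the off-diagonal part that produces $e$. The weight functor $H^*_c(S_\mu,-)$ is the compactly-supported cohomology of the semi-infinite orbit $S_\mu$, not merely a stalk at $t^\mu$, and there is no a priori localization formula expressing the filtration-shifting part of $\cup c_1$ as a difference of two fixed-point restrictions. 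Without such a formula, the identity you assert is unproved.

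The paper circumvents exactly this difficulty by a different, more concrete device: it works with the costandard object $\calI^{-w_0(\lambda_i)}_*$ and the action of $e$ on its \emph{lowest} weight vector $v_{low}$, which is the fundamental class $[\Grass_{-\lambda_i}]$. The crucial preliminary claim (that $H^{i-\langle2\rho^\vee,\lambda\rangle}(\calI_*^\lambda)\to H^i(\Grass_\lambda)$ is an isomorphism for $i\le 2$) lets one compute $c_1(\calL_{\det})\cup[\Grass_{-\lambda_i}]$ by restricting to the open orbit $\Grass_{-\lambda_i}$, then to the compact orbit $G\cdot t^{-\lambda_i}$, where the target $H^2$ is genuinely generated by a $\PP^1$-cycle. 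This chain of identifications is what makes "matrix coefficient equals degree on a $\PP^1$" precise, and it is specific to the lowest weight vector; it would not obviously work for a general weight pair $(\mu,\mu+\alpha_i)$. Your secondary worry---that weight spaces should be one-dimensional so the computation is rank-one---is correctly diagnosed, and indeed the paper's choice (lowest weight in a Schur module, so the $\alpha_i$-adjacent weight space is rank one) is exactly what resolves it, but you would need to supply the fundamental-class identification to complete the argument rather than relying on a localization formula that does not apply to the off-diagonal part.

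One smaller point: the paper's degree computation does not use a difference of two fixed-point restrictions; it uses a single restriction $c_1^T(\calL^{\Ad}_{\Lambda_0})|_{t^{-\lambda_i}}\in\xch(T)$ paired with $\alpha_i$, which gives the degree because the $\PP^1$ is $\phi_0(\SL(2))/B_2$ and the $B_2$-character determines the line bundle. Either localization formula for degrees on $\PP^1$ is valid, so this is a cosmetic difference, but it is worth noting that the one-fixed-point version is what appears in the paper and dovetails with \eqref{reschern}.
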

\begin{proof}
Let $\calI^{\lambda}_*$ be the perverse sheaf $\leftexp{p}{j}^\lambda_*\ZZ[\langle2\rho^\vee,\lambda\rangle]$ where $j^\lambda:\Grass_\lambda\to\Grass_G$ is the inclusion. We first claim that the restriction map $H^{i-\langle2\rho^\vee,\lambda\rangle}(\calI^{\lambda}_*)\to H^i(\Grass_\lambda)$ is an isomorphism for $i\leq2$. In fact, the cone $\calF$ of $\calI^\lambda_*\to j^{\lambda}_*\ZZ[\langle2\rho^\vee,\lambda\rangle]$ has perverse degree $\geq1$, and is supported on $\Grass_{<\lambda}=\Grass_{\leq\lambda}-\Grass_\lambda$, which has dimension $\leq\langle2\rho^\vee,\lambda\rangle-2$. Therefore, $H^i(\calF)$ vanishes for $i<-\dim{\Grass_{<\lambda}}+1\leq-\langle2\rho^\vee,\lambda\rangle+3$. This implies our assertion.

Now we prove the proposition. Without changing $e$ and $\frg^\vee$, we can assume that $G$ is of
adjoint type. Write $e=\sum_{i}n_ix_i$ for some integers $n_i$. For
each simple coroot $\alpha_i$ of $G$, let $\lambda_i$ be the
corresponding fundamental coweight. By \cite[Proposition 13.1]{MV},
the $G^\vee$-module $H^*(\calI^{-w_0(\lambda_i)}_*)$ is the Schur
module with highest weight $-w_0(\lambda_i)$, where $w_0$ stands for
the element of longest length in the Weyl group of $G$. Therefore,
the Lie algebra element $x_i$ sends the lowest weight vector $v_{low}$ (with
weight $-\lambda_i$) to a generator of the rank 1 weight space
of weight $-\lambda_i+\alpha_i$. For $j\neq i$, clearly $x_j\cdot v_{low}=0$. Therefore $e\cdot v_{low}$ is $n_i$-times a generator of the rank 1 weight space of weight $-\lambda_i+\alpha_i$.

Translating to geometry, this means that the action of
$c_1(\calL_{\det})$ sends the fundamental class
$[\Grass_{-\lambda_i}]\in
H^{-\langle2\rho^\vee,-w_0(\lambda_i)\rangle}(\calI^{-w_0(\lambda_i)}_*)$
to $n_i$-times a generator of $H^{2-\langle
2\rho^\vee,-w_0(\lambda_i)\rangle}(\calI^{-w_0(\lambda_i)}_*)$
(which is a free $\ZZ$-module of rank one). By the discussion at the
beginning of the proof, we can identify $H^{i-\langle2\rho^\vee,-w_0(\lambda_i)\rangle}(\calI^{-w_0(\lambda_i)}_*)$ with $H^i(\Grass_{-\lambda_i})$. Hence $c_1(\calL_{\det})$ sends $1\in
H^0(\Grass_{-\lambda_i})$ to $n_i$-times a generator of
$H^2(\Grass_{-\lambda_i})$ (which is unique up to sign). To
calculate this number $n_i$, it suffices to calculate the class
$c_1(\calL_{\det})|_{\Grass_{-\lambda_i}}\in
H^2(\Grass_{-\lambda_i})$, or its further restriction to $H^2(G\cdot
t^{-\lambda_i})$ (because the inclusion $G\cdot
t^{-\lambda_i}\hookrightarrow\Grass_{-\lambda_i}$ is a homotopy
equivalence).

Now the restriction of $c_1(\calL_{\det})$ to $G\cdot
t^{-\lambda_i}$ is $n_i$-times the generator of $H^2(G\cdot
t^{-\lambda_i})$, which is a free $\ZZ$-module of rank 1. Let
$\phi_i:\SL(2)\to G$ be the homomorphism corresponding to the coroot
$\alpha_i$ of $G$. Then $\phi_i(\SL(2))\cdot
t^{-\lambda_i}=\SL(2)/B_2\cong\PP^1$ is the generating 2-cycle of the partial flag variety
$G\cdot t^{-\lambda_i}$ (by the Bruhat decomposition), and $n_i$
equals the degree of the restriction of $\calL_{\det}$ to
$\phi_i(\SL(2))\cdot t^{-\lambda_i}$.

Let $\Ad:G\to GL(\frg)$ denote the adjoint representation of $G$. As a fact for line bundles on $\PP^1$, the degree of $\calL^{\Ad}_{\Lambda_0}$ on $\phi_i(\SL(2))\cdot t^{-\lambda_i}\cong\PP^1$ is the same as the pairing of $c_1^T(\calL^{\Ad}_{\Lambda_0})|_{t^{-\lambda_i}}\in H^2_T(\{t^{-\lambda_i}\})=\xch(T)$ with $\alpha^\vee_i$. By \eqref{reschern}, this degree equals
\[
-\sum_{\alpha^\vee\in\Phi^\vee}\langle\alpha^\vee,-\lambda_i\rangle\langle\alpha^\vee,\alpha_i\rangle=(\lambda_i,\alpha_i)_{Kil}.
\]
By Lemma \ref{l:deg}, $c_1(\calL^{\Ad}_{\Lambda_0})=d_{\Ad}c_1(\calL_{\det})=\dfrac{1}{2}(\theta,\theta)_{Kil}c_1(\calL_{\det})$, therefore the degree of $\calL_{\det}$ on $\phi_i(\SL(2))\cdot t^{-\lambda_i}$ is
\[
n_i=\dfrac{2(\lambda_i,\alpha_i)_{Kil}}{(\theta,\theta)_{Kil}}=\dfrac{(\alpha_i,\alpha_i)_{Kil}}{(\theta,\theta)_{Kil}}.
\]
Since $\theta$ is a short coroot, we conclude that $n_i=|\alpha_i|^2$.
\end{proof}

Next we determine the element $f\in\frt^\vee\otimes\xch(T)\otimes\QQ$.

\begin{prop}\label{p:f}
Identifying $\frt^\vee$ with $\xch(T)$ and $\frt^\vee\otimes\xch(T)$ with the dual of $\xcoch(T)\otimes\xcoch(T)$, the element $f$ is the ($\QQ$-valued) symmetric bilinear form
\[
f(\lambda,\mu)=\dfrac{-2(\lambda,\mu)_{Kil}}{(\theta,\theta)_{Kil}}.
\]
\end{prop}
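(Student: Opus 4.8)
The plan is to compute the bilinear form $f$ by the same equivariant-localization strategy used for $e$ in Proposition \ref{p:e0}, now keeping track of the full $T$-equivariant Chern class rather than its non-equivariant image. Recall that $f$ is the ``torus part'' of $e^T$, i.e.\ the component of $e^T$ lying in $\frt^\vee\otimes\xch(T)\otimes\QQ$, and that $e^T$ is extracted from the operator $\cup c_1^T$ via the $2\times2$-matrix construction \eqref{2by2}. The key point is that, after Lemma \ref{l:freeT}, the functor $H^*_T(-)$ is identified with $\omega_{R_T}$, and its associated graded with respect to the $T$-equivariant MV-filtration recovers the weight decomposition of the standard representation-theoretic fiber functor; the image of $e^T$ in $\frt^\vee\otimes R_T\otimes\QQ = \bigoplus_\mu\bigl(R_T\otimes\QQ\bigr)\cdot(\text{projector onto the }\mu\text{-weight space})$ is therefore governed by how $\cup c_1^T$ acts on each equivariant weight functor $H^*_{T,c}(S_\mu,-)$.

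First I would invoke Remark \ref{r:loc}: the cup-product action of $H^*_T(\Grass_G)$ on $H^*_{T,c}(S_\lambda,-)$ factors through the restriction $i_\lambda^*:H^*_T(\Grass_G)\to H^*_T(\{t^\lambda\})\cong R_T$. Consequently, on the $\lambda$-weight piece, $\cup c_1^T$ acts by multiplication by the scalar $i_\lambda^*(c_1^T(\calL_\det))\in R_T\otimes\QQ = \xch(T)\otimes\QQ$ (a degree-$2$, i.e.\ degree-one-in-$\xch(T)$, element). Comparing with the matrix description \eqref{2by2} of $e^T$, this says precisely that the $\frt^\vee$-component of $e^T$, evaluated against the cocharacter $\lambda$, equals $i_\lambda^*(c_1^T(\calL_\det))\in\xch(T)\otimes\QQ$; in other words, under the identification $\frt^\vee\otimes\xch(T) = (\xcoch(T)\otimes\xcoch(T))^\vee$, the form $f$ is $f(\lambda,\mu) = \langle\, i_\lambda^*(c_1^T(\calL_\det)),\,\mu\,\rangle$. (Here I must be slightly careful: $\calL_\det$ need not be $T$-equivariant, so I would really work with $c_1^T = \frac{1}{d_V}c_1^T(\calL^\phi_{\Lambda_0})$ from the Corollary, whose $T$-equivariant restriction to $t^\lambda$ is $\frac{1}{d_V}$ times the character in \eqref{reschern}; the normalization constant is exactly what converts $d_{\Ad}$-type factors into the clean answer.)

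Second, I would plug in the formula \eqref{reschern} for the restriction of $c_1^T(\calL^\phi_{\Lambda_0})$ to $t^\lambda$, namely $-\sum_{\chi^\vee\in\weight(V)}\dim V_{\chi^\vee}\langle\chi^\vee,\lambda\rangle\chi^\vee$, divide by $d_V = \tfrac12\sum_{\chi^\vee}\dim V_{\chi^\vee}\langle\chi^\vee,\theta\rangle^2$ (Lemma \ref{l:deg}), and pair with $\mu$. This gives
\[
f(\lambda,\mu) = \frac{-\sum_{\chi^\vee\in\weight(V)}\dim V_{\chi^\vee}\,\langle\chi^\vee,\lambda\rangle\langle\chi^\vee,\mu\rangle}{\tfrac12\sum_{\chi^\vee\in\weight(V)}\dim V_{\chi^\vee}\,\langle\chi^\vee,\theta\rangle^2}.
\]
Now the numerator, for the special choice $V = \frg$ with $\Ad$, is by definition $-(\lambda,\mu)_{Kil}$, and the denominator is $\tfrac12(\theta,\theta)_{Kil}$, giving $f(\lambda,\mu) = -2(\lambda,\mu)_{Kil}/(\theta,\theta)_{Kil}$ as claimed. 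For a general $V$ one checks that both numerator and denominator scale by the same Dynkin index of $\phi$, so the ratio is representation-independent—consistent with the Corollary asserting $c_1^T$ is well-defined in $H^2_T(\Grass_G,\QQ)$.

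The main obstacle, and the step I would write out most carefully, is the bookkeeping in the second paragraph: matching the ``off-diagonal'' scalar by which $\cup c_1^T$ acts on the $\lambda$-weight space with the $\frt^\vee$-component of the element $e^T$ produced by the functorial construction \eqref{2by2}, and making sure the identifications $\frt^\vee\cong\xch(T)$, $H^2_T(\pt)\cong\xch(T)$, and ``dual of $\xcoch(T)\otimes\xcoch(T)$'' are all compatible (and that signs and the factor $2\rho^\vee$-grading are consistent with the degree bound \eqref{eT}). Once the dictionary is pinned down, the rest is the substitution above together with the elementary observation $\dim V_{\chi^\vee}=\dim V_{-\chi^\vee}$, which is already used in the proof of Lemma \ref{l:deg}.
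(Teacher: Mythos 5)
Your proposal is correct and follows essentially the same route as the paper's own proof: extract $f$ from the action of $\cup c_1^T$ on the associated graded pieces $H^*_{T,c}(S_\lambda,-)$ of the $T$-equivariant MV-filtration, identify that action via restriction to $t^\lambda$ with $i_\lambda^*(c_1^T)$, and then compute $i_\lambda^*(c_1^T)$ using \eqref{reschern} for $\calL^{\Ad}_{\Lambda_0}$ together with Lemma \ref{l:deg} to normalize by $d_{\Ad}=\tfrac12(\theta,\theta)_{Kil}$. Your extra remarks (working with $c_1^T(\calL^\phi_{\Lambda_0})/d_V$ rather than $c_1^T(\calL_\det)$ directly, and noting representation-independence via the Dynkin index) are sound and consistent with what the paper does implicitly.
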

\begin{proof}Write $f=\sum f_i\otimes g_i$, where $f_i\in\frt^\vee, g_i\in\xch(T)$. Since the cup product with $c^T_1$ preserves the $T$-equivariant MV-filtration, there is an induced action $\overline{e^T}$ on the associated graded pieces $H^*_{T,c}(S_\lambda,-)$. The element $f$ is given by the action $\overline{e^T}$. More precisely, for $\lambda\in\xcoch(T)$, the action of $\overline{e^T}$ on the $\lambda$-weight subspace is given by the multiplication of $\sum\langle f_i,\lambda\rangle g_i$. Translating back to geometry, this means that the cup product of $c^T_1$ on $H^*_{T,c}(S_\lambda,-)$ should be the multiplication by $\sum\langle f_i,\lambda\rangle g_i$ (where as before, $g_i\in\xch(T)\cong H^2_T(\pt)$). On the other hand, $c^T_1$ acts on $H^*_{T,c}(S_\lambda,-)$ via its restriction to  $S_\lambda$. Since $\{t^\lambda\}\hookrightarrow S_\lambda$ is a $T$-equivariant homotopy equivalence, the action of $c^T_1$ on $H^*_{T,c}(S_\lambda,-)$ is given by the multiplication of $i^*_\lambda(c^T_1)\in H^2_T(\{t^\lambda\},\QQ)=\xcoch(T)\otimes\QQ$.

Applying \eqref{reschern} to the line bundle $\calL^{\Ad}_{\Lambda_0}$ associated with the adjoint representation $\Ad$ of $G$, and using Lemma \ref{l:deg}, we see that the restriction of $c^T_1$ to $H^2_T(\{t^\lambda\})$ is given by:
\[
i^*_{\lambda}(c^T_1)=\dfrac{-2}{(\theta,\theta)_{Kil}}\sum_{\alpha^\vee\in\Phi^\vee}\langle\alpha^\vee,\lambda\rangle\alpha^\vee.
\]
Therefore,
\[
f=\dfrac{-2}{(\theta,\theta)_{Kil}}\sum_{\alpha^\vee\in\Phi^\vee}\alpha^\vee\otimes\alpha^\vee.
\]
If we view $f$ as a bilinear form on $\xcoch(T)$, then for any $\lambda,\mu\in\xcoch(T)$,
\[
f(\lambda,\mu)=\dfrac{-2\sum_{\alpha^\vee\in\Phi^\vee}\langle\alpha^\vee,\lambda\rangle\langle\alpha^\vee,\mu\rangle}{(\theta,\theta)_{Kil}}=\dfrac{-2(\lambda,\mu)_{Kil}}{(\theta,\theta)_{Kil}}.
\]
\end{proof}

\begin{remark}\label{nG}Let $n_G$ be the least positive integer such that
$n_Gf\in\frt^\vee\otimes\xch(T)$ (or equivalently, such that
$n_Ge^T\in\frg^\vee\otimes R_T$). Clearly, we have
$n_G=n_{G^{der}}$. On
the other hand, for $G$ is almost simple, this number can be found in
\cite[Table C]{Sor}, in which $n_G$ is denoted by $\ell_b$. We reproduce these numbers explicitly in the following list.

\begin{itemize}
\item $n_G=\dfrac{\#\pi_1(G)}{\gcd(\#Z(G),\#\pi_1(G))}$ if $G$ is of type A;
\item $n_G=2$ if $G$ is $\mathbf{PSp}_{2m}$ (for $m$ odd), $\mathbf{SO}_{4m}^{\pm}$ (for $m$ odd), $\mathbf{PSO}_{4m}$, or $\mathbf{PE}_7$;
\item $n_G=3$ if $G$ is  $\mathbf{PE}_6$;
\item $n_G=4$ if $G$ is $\mathbf{PSO}_{4m+2}$;
\item $n_G=1$ otherwise.
\end{itemize}
Here $\mathbf{P}$ stands for the adjoint form, and $\mathbf{SO}^{\pm}_{4m}$ are the two quotients of $\mathbf{Spin}_{4m}$ by $\ZZ/2$ which are not isomorphic to $\mathbf{SO}_{4m}$.
\end{remark}


\section{Proof of the main result}\label{proof}
Recall the morphism $\tsigma^T:\Spec H^T_*(\Grass_G)\to B^\vee\times\Spec R_T$ defined in Proposition \ref{p:ZU}(1) and its non-equivariant counterpart $\tsigma:\Spec H_*(\Grass_G)\to B^\vee$. It is clear that the action of $\sigma_{can}$ on the fiber functor $H^*_T(-)\otimes_{R_T}H^T_*(\Grass_G)$ commutes with the cup product action of $c^T_1$. Therefore, $\tsigma^T$ further factors through the homomorphism
\begin{equation*}
\sigma^T:\Spec H^T_*(\Grass_G)[1/n_G]\to B^\vee_{e^T},
\end{equation*}
of group schemes over $\Spec R_T[1/n_G]$. Here $B^\vee_{e^T}$ is the centralizer group scheme of $e^T\in\frg^\vee(R_T[1/n_G])$ in $B^\vee\times\Spec R_T[1/n_G]$. Since $e$ is already an element in $\frg^\vee$, we also have the homomorphism
\begin{equation*}
\sigma:\Spec H_*(\Grass_G)\to Z^\vee\times U^\vee_e\hookrightarrow B^\vee_e
\end{equation*}
of group schemes over $\ZZ$ (the first arrow above follows from Proposition \ref{p:ZU}(4)).

\begin{theorem}\label{th:main} Assume that $G^{der}$ is almost simple.
\begin{enumerate}
\item The homomorphism $\sigma:\Spec H_*(\Grass_G)\to B^\vee_e$ is a closed embedding, and is an isomorphism over $\Spec\ZZ[1/\ell_G]$, where $\ell_G=1,2$ or $3$ is the square of the ratio of lengths of the long and short coroots of $G$. In particular, $\Spec H_*(\Grass_G)\to Z^\vee\times U^\vee_e$ is also an isomorphism over $\Spec\ZZ[1/\ell_G]$.
\item The homomorphism $\sigma^T:\Spec H^T_*(\Grass_G)[1/n_G]\to B^\vee_{e^T}$ is a closed embedding, and is an isomorphism over $\Spec R_T[\dfrac{1}{\ell_Gn_G}]$.
\end{enumerate}
\end{theorem}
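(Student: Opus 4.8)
\emph{Reduction to $G$ simply connected, and the base-change picture.} By Proposition \ref{p:ZU}(2),(4) the homomorphism $\sigma$ splits as the product of the isomorphism $\Spec H_0(\Grass_G)\isom Z^\vee$ and the non-equivariant map attached to $G^{sc}$, and likewise for $\sigma^T$; so I may assume $G=G^{sc}$. Then $B^\vee_e=U^\vee_e$ is the centralizer of $e$ in the unipotent radical $U^\vee$, and the claims become: $\sigma\colon\Spec H_*(\Grass_G)\to U^\vee_e$ is a closed embedding and an isomorphism over $\ZZ[1/\ell_G]$, while $\sigma^T$ is a closed embedding over $R_T[1/n_G]$ and an isomorphism over $R_T[1/(\ell_Gn_G)]$. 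I will also use that $H^*_T(\Grass_G)$ is $R_T$-free, whence $H^T_*(\Grass_G)\otimes_{R_T}\ZZ\cong H_*(\Grass_G)$, together with $B^\vee_{e^T}\times_{\Spec R_T}\{0\}=B^\vee_e$ (specializing the Cartan part $f$ of $e^T$ to $0$): thus $\sigma$ is the fibre of $\sigma^T$ over the origin of $\Spec R_T$.

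\emph{The generic fibre of $\sigma^T$.} First I would compute $\sigma^T\otimes_{R_T}\Frac(R_T\otimes\QQ)$. By Proposition \ref{p:f} the element $f$ is a nonzero rational multiple of the Killing form, hence nondegenerate in characteristic $0$; so over this field $e^T=e+f$ is regular semisimple, $B^\vee_{e^T}$ is a maximal torus of $B^\vee$, and the projection $B^\vee_{e^T}\to T^\vee$ is an isomorphism. By Remark \ref{r:loc} the composite of $\sigma^T$ with this projection is $\Spec(Loc_*)$, and the equivariant localization theorem makes $Loc^*$ an isomorphism after inverting the Euler classes, so $\Spec(Loc_*)\otimes\Frac(R_T)$ is an isomorphism onto $T^\vee_{\Frac(R_T)}$. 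Hence $\sigma^T$ is an isomorphism over $\Frac(R_T\otimes\QQ)$, and its fibre $\sigma\otimes\QQ$ over the origin is a group-scheme reformulation of Ginzburg's isomorphism \eqref{ge}.

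\emph{Closed embedding.} The closed-embedding assertions amount to surjectivity of $\calO(B^\vee_{e^T})\to H^T_*(\Grass_G)$ and $\calO(U^\vee_e)\to H_*(\Grass_G)$. By a Nakayama argument applied degree by degree in the grading, it suffices to prove surjectivity after $-\otimes_{R_T}\kappa$ for every residue field $\kappa$ of $R_T[1/n_G]$, i.e.\ to show that over every field $k$ of characteristic prime to $n_G$ the map $\Spec(H^T_*\otimes_{R_T}k)\to B^\vee_{e^T_k}$ is a closed immersion. In characteristic $0$ this is base change of Ginzburg's theorem; in characteristic $p$ one argues directly with $H_*(\Grass_{G},\FF_p)$, using that it is generated as an $\FF_p$-algebra by the Schubert classes $[\Grass_{\le\omega}]$ attached to (quasi-)minuscule fundamental coweights $\omega$ together with the Hopf (Pontryagin) structure, and checking these lie in the image --- the identification $e=\sum_i|\alpha_i|^2x_i$ of Proposition \ref{p:e0} being used to locate them. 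This characteristic-$p$ surjectivity is the main obstacle: it cannot be extracted from Ginzburg's characteristic-zero computation and genuinely requires controlling $H_*(\Grass_G)$ modulo $p$.

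\emph{From closed embedding to isomorphism.} Over $\ZZ[1/\ell_G]$ each coefficient $|\alpha_i|^2\in\{1,\dots,\ell_G\}$ is invertible --- this is exactly why $\ell_G$ must be inverted --- so $e$ reduces to a regular nilpotent modulo every prime, and $U^\vee_e[1/\ell_G]$ is smooth over $\ZZ[1/\ell_G]$ (indeed $\cong\AA^r_{\ZZ[1/\ell_G]}$, $r=\mathrm{rk}\,G$) with geometrically integral fibres of dimension $r$; meanwhile $\Spec H_*(\Grass_G)$ is flat and of finite type over $\ZZ$ of relative dimension $r$, since $H_*(\Grass_G)$ is a finitely generated free $\ZZ$-algebra (Bott) with generic fibre of dimension $r$. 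A closed immersion of a flat $\ZZ[1/\ell_G]$-scheme of relative dimension $r$ into a flat one with integral fibres of dimension $r$ is fibrewise surjective (the source contains the generic point of each target fibre, which being reduced is then the whole fibre), hence a fibrewise isomorphism, hence an isomorphism; this proves part (1) over $\ZZ[1/\ell_G]$. The same argument over $R_T[1/(\ell_Gn_G)]$ --- where $e^T=e+f$ specializes at each point to a regular element (a regular nilpotent plus a Cartan element is always regular), so $B^\vee_{e^T}$ is smooth with integral fibres of dimension $r$ --- proves part (2). Finally, for the primes dividing $n_G$ but not $\ell_G$ in part (1), one repeats the fibrewise argument over $\ZZ[1/\ell_G]$ itself, $\sigma$ being already known to be a closed immersion there.
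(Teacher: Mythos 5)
Your outline correctly isolates the three moving parts (generic isomorphism via localization, closed embedding via surjectivity, and the passage from embedding to isomorphism), and the localization step matches the paper. But there are two genuine gaps, one of them precisely at the point you yourself flag as ``the main obstacle.''

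\emph{The surjectivity is not proved.} You reduce the closed-embedding claim to showing $\calO(U^\vee_e)\otimes k\to H_*(\Grass_G,k)$ is surjective for every field $k$, and then say that in characteristic $p$ ``one argues directly \ldots using that $H_*(\Grass_G,\FF_p)$ is generated by the Schubert classes attached to (quasi-)minuscule fundamental coweights.'' This is not an argument but a statement of what would need to be shown, and it is not obviously the right generating statement either. The paper's actual input here is Bott's theorem \cite[Theorem 1]{Bott} on the existence of a single \emph{generating circle} $\lambda$ for $G$ adjoint, combined with a matrix-coefficient computation (the commutative diagram \eqref{trisq}) showing that the image of $H_*(\Grass_\lambda)\to H_*(\Grass_G)$ lies in the image of $\tau$; one then deduces surjectivity for adjoint $G$ and passes to general $G$ by Proposition \ref{p:ZU}(4). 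Without this (or some replacement), the closed-embedding half of both (1) and (2) is unproved. Note also that in part (1) the closed embedding is asserted over $\ZZ$, not merely over $\ZZ[1/n_G]$, whereas your Nakayama reduction only treats residue fields of $R_T[1/n_G]$; the paper's argument establishes surjectivity of $\tau$ integrally and then lifts to the equivariant case.

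\emph{The passage from embedding to isomorphism leans on an unjustified integrality claim.} You assert that $U^\vee_e[1/\ell_G]$ is smooth over $\ZZ[1/\ell_G]$, ``indeed $\cong\AA^r$,'' with geometrically integral fibres. Springer's theorem (cited in \S\ref{dist}) gives smoothness of $B^\vee_e\otimes\FF_p$ only for $p$ \emph{good} for $G^\vee$, and in types $D$, $E$, $F_4$, $G_2$ there are bad primes of $G^\vee$ that do not divide $\ell_G$ (e.g.\ $p=2$ for $G_2$, where $\ell_G=3$). For such $p$ one does not know $U^\vee_e\otimes\bar\FF_p$ is reduced, and your fibrewise argument (``a closed immersion of equal dimension into an integral fibre is an isomorphism'') breaks down if the target fibre is non-reduced. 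The paper circumvents this entirely: Step II proves only \emph{flatness} of $B^\vee_e[1/\ell_G]$ over $\ZZ[1/\ell_G]$ (a local-complete-intersection argument using the dimension bound from Keny \cite{K}), which makes $\calO(B^\vee_e)[1/\ell_G]$ torsion-free; injectivity of $\tau[1/\ell_G]$ then follows from the generic isomorphism alone, and no smoothness or reducedness of special fibres is needed. This is exactly what lets the theorem include bad primes such as $2$ for $G_2$, as the paper's closing remark emphasizes.
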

\begin{proof}
Step I. We first prove that $\sigma^T$ and $\sigma$ are closed embeddings, i.e., $\tau^T:\calO(B^\vee_{e^T})\to H^T_*(\Grass_G)[1/n_G]$ and $\tau:\calO(B^\vee_e)\to H_*(\Grass_G)$ are surjective. It suffices to prove the surjectivity of $\tau$ because any homogeneous lifting of a $\ZZ[1/n_G]$-basis of $H_*(\Grass_G)[1/n_G]$ to $H^T_*(\Grass_G)[1/n_G]$ generates $H^T_*(\Grass_G)[1/n_G]$ as an $R_T[1/n_G]$-module, and we can choose these liftings to be in the image of $\tau^T$ since $\calO(B^\vee_{e^T})\to\calO(B^\vee_e)$ is surjective.

Fix a dominant coweight $\lambda$ of $G$. Consider the following diagram
\begin{equation}\label{trisq}
\xymatrix{H_*(\Grass_\lambda)\ar[r]\ar[dr] & H_{*-\langle2\rho^\vee,\lambda\rangle}(\calI^\lambda_*)\ar[d]\ar[r]^(0.6){\beta} & \calO(B^\vee_{e})\ar[d]^{\tau}\\
&  H_*(\Grass_{\leq\lambda})\ar[r] & H_*(\Grass_G)}
\end{equation}
where $H_*(\calI^\lambda_*)$ is the $\ZZ$-dual of $H^*(\calI^\lambda_*)$. In the following, we describe the various arrows in this diagram and verify its commutativity.

First we concentrate on the triangle in \eqref{trisq}. We have the natural morphisms between complexes on $\Grass_{\leq\lambda}$:
\[
\ZZ\to\calI^\lambda_*[-\langle2\rho^\vee,\lambda\rangle]\to j^\lambda_*\ZZ.
\]
Taking duals of their cohomology, we get the triangle in \eqref{trisq}, which is therefore commmutative.

Next we examine the square in \eqref{trisq}. The map $\beta$ is defined to be the matrix coefficient of the $G^\vee$-module $H^*(\calI^\lambda_*)$ with respect to the lowest weight vector
\[
v_{low}=[\Grass_{\leq\lambda}]\in H^{-\langle2\rho^\vee,\lambda\rangle}(\calI^\lambda_*).
\]
More precisely, for $u\in H_*(\calI^\lambda_*)$,
\[
\beta(u)(g)=\langle u,g\cdot v_{low}\rangle, \forall g\in B^\vee_{e}.
\]
We check that commutativity of the square in \eqref{trisq}. For $u\in H_*(\calI^\lambda_*)$ and the $H_*(\Grass_G)$-valued point $\sigma_{can}\in B^\vee_{e}(H_*(\Grass_G))$, we have
\begin{equation}\label{taubeta}
\tau\beta(u)=\beta(u)(\sigma_{can})=\langle u,\sigma_{can} v_{low}\rangle=\sum_i\langle u,h^i\cdot v_{low}\rangle h_i\in H_*(\Grass_G)
\end{equation}
where $\{h^i\},\{h_i\}$ are dual bases of $H^*(\Grass_G)$ and $H_*(\Grass_G)$. On the other hand, the map $H^*(\Grass_{\leq\lambda})\to H^{*-\langle2\rho^\vee,\lambda\rangle}(\calI^\lambda_*)$ sends $h\mapsto h\cdot v_{low}$, hence the adjoint map $H_{*-\langle2\rho^\vee,\lambda\rangle}(\calI^\lambda_*)\to H_*(\Grass_{\leq\lambda})$ sends $u\mapsto\sum_i\langle u,h^i\cdot v_{low}\rangle h_i$. Combining with \eqref{taubeta}, we have verified that the square in \eqref{trisq} is commutative.

From diagram \eqref{trisq}, we conclude that the image of $\calO(B^\vee_e)\to H_*(\Grass_G)$ contains the image of $H_*(G\cdot t^\lambda)=H_*(\Grass_\lambda)\to H_*(\Grass_G)$ for any $\lambda\in\xcoch(T)$. When $G$ is of adjoint type, by \cite[Theorem 1]{Bott}, there exists a single $\lambda$ (called the ``generating circle'' by Bott), such that the image of $H_*(G\cdot t^\lambda)\to H_*(\Grass_G)$ generates $H_*(\Grass_G)$ as a $\ZZ$-algebra. This implies that $\calO(B^\vee_e)\to H_*(\Grass_G)$ is surjective in the case $G$ is adjoint. By Proposition \ref{p:ZU}(4), this means that $\Spec H_*(\Grass_G)=\Spec H_0(\Grass_G)\otimes H_*(\Grass_{G^{sc}})\to Z^\vee\times U^\vee$ is a closed embedding when $G$ is adjoint. Since $\Spec H_0(\Grass_G)\cong Z^\vee$, we conclude that $H_*(\Grass_{G^{sc}})\to U^\vee$ is also a closed embedding. For general $G$, using Proposition \ref{p:ZU}(4) again, we conclude that $\Spec H_*(\Grass_G)\to Z^\vee\times U^\vee$ is always a closed embedding.

Step II. We prove that $B^\vee_{e^T}[1/\ell_G]$ is flat over $R_T':=R_T[\dfrac{1}{\ell_Gn_G}]$. Consider the morphism over $R'_T$
\[
\phi=\Ad(-)e^T-e^T: B^\vee\times\Spec R'_T\to\fru^\vee\times\Spec R'_T.
\]
Then $B^\vee_{e^T}[1/\ell_G]=\phi^{-1}(0)$, i.e.,
$B^\vee_{e^T}[1/\ell_G]$ is the closed subscheme of the
$B^\vee\times\Spec R_T'$ (which is smooth over $R_T'$) cut out by
$\dim\fru^\vee$ equations. Hence the fiber dimensions of
$B^\vee_{e^T}\to\Spec R_T'$ are at least $\dim B^\vee-\dim\fru^\vee=r$,
the rank of $G^\vee$. If we can show that all the fibers of
$B^\vee_{e^T}[1/\ell_G]\to\Spec R_T'$ are $r$-dimensional, then
$B^\vee_{e^T}[1/\ell_G]$ will be locally a complete intersection
over $R_T'$. Using that all the fibers are of the same dimension
again and that $R'_T$ is regular, we can conclude that
$B^\vee_{e^T}$ is flat over $R_T'$ (see \cite[20.D]{M} with obvious
modifications). Therefore, it suffices to show that for any closed
point $s\in\Spec R'_T$, the fiber $(B^\vee_e)_s$ has dimension $r$.

Since $R_T$ is graded, $\Spec R_T$ admits a natural $\GG_m$-action with fixed point locus $z:\Spec\ZZ\hookrightarrow\Spec R_T$ defined by the augmentation ideal of $R_T$. Moreover, this fixed point locus is attracting: for each point $s\in\Spec R_T$, let $\eta\in\Spec R_T$ be the generic point of the $\GG_m$-orbit of $s$, then the closure of $\eta$ intersects the fixed point locus $z(\Spec\ZZ)$. In other words, $\eta$ specializes to a point $s_0\in z(\Spec\ZZ)$ of the same residue characteristic as $s$. Hence $\dim(B^\vee_e)_s=\dim(B^\vee_e)_{\eta}\leq\dim(B^\vee_{e_0})_{s_0}$. Therefore, in order to show that all fibers of $B^\vee_{e^T}[1/\ell_G]\to\Spec R'_T$ have dimension $r$, it suffices to show that all (geometric) fibers of $B^\vee_{e}[1/\ell_G]\to\Spec\ZZ[1/\ell_G]$ have dimesion $r$.

Let $k$ be an algebraic closure of $\FF_p$ for some prime $p$ not
dividing $\ell_G$. We would like to show that
$B^\vee_{e}\otimes_{\ZZ}k$ has dimension $r$ over $k$. We base
change the situation from $\ZZ$ to $k$ without changing notations.
So in the rest of this paragraph $B^\vee_{e}$ is over $k$, etc. By
Proposition \ref{p:e0}, the element $e$ has the form
$\sum_{i}n_ix_i$ where each $n_i\neq0$ in $k$. We can choose $t\in
T^\vee(k)$ such that $\Ad(t)e=\sum_ix_i$ (because
$\prod\alpha_i:T^\vee(k)\to (k^\times)^r$ is surjective). Therefore
we only need to treat the element $e_1=\sum_ix_i$. It is well-known
that $B^\vee_{e_1}=Z^\vee\times U^\vee_{e_1}$, therefore it suffices
to show that $\dim U^\vee_{e_1}=r-\dim Z^\vee$. This equality
follows from the main result of \cite{K}. This proves that
$B^\vee_{e^T}$ is flat over $R_T'$.

The above argument also shows that $B^\vee_e[1/\ell_G]$ is flat over $\ZZ[1/\ell_G]$.

Step III. Now we can finish the proof.

(2) The equivariant version. In view of Step I, it remains to prove that $\calO(B^\vee_{e^T})[1/\ell_G]\to H^T_*(\Grass_G)[\dfrac{1}{\ell_Gn_G}]$ is injective. Since both $\calO(B^\vee_{e^T})[1/\ell_G]$ and $H^T_*(\Grass_G)[\dfrac{1}{\ell_Gn_G}]$ are flat over $R'_T$, it suffices to show that $\calO(B^\vee_e)[1/\ell_G]\otimes_{R'_T}Q\to H^T_*(\Grass_G)\otimes_{R_T}Q$ is injective where $Q=\Frac(R_T)$.

Recall from Remark \ref{r:loc} that we have a commutative diagram
\[
\xymatrix{\Spec(H^T_*(\Grass_G)\otimes_{R_T}Q)\ar[r]^(.7){\tau_Q}\ar[dr]^{\Spec(Loc_*)} & B^\vee_{e_Q}\ar[d]\\ & T^\vee\times\Spec Q}
\]
where $e_Q\in\frb^\vee(Q)$ is the value of $e^T$ over the generic point of $\Spec R_T$. By the equivariant localization theorem, the localization map $Loc_*$ in \eqref{homloc} is an isomorphism. On the other hand, by Proposition \ref{p:f}, $e_Q$ is a regular semisimple element in $\frb^\vee(Q)$, hence the projection $B^\vee_{e_Q}\to T^\vee\times\Spec Q$ is also an isomorphism. This shows that the closed embedding $\tau_Q$ is in fact an isomorphism, and (2) is proved.

(1) The non-equivariant version. As a consequence of the equivariant version, $\sigma:\Spec H_*(\Grass_G)\to B^\vee_e$ is an isomorphism over $\QQ$ (we base change $\sigma^T$ to $\Spec\QQ\to\Spec\ZZ\xrightarrow{z}\Spec R_T$). Therefore, $\tau\otimes\QQ:\calO(B^\vee_e)\otimes\QQ\to H_*(\Grass_G,\QQ)$ is an isomorphism. Since $\calO(B^\vee_e)[1/\ell_G]$ is flat over $\ZZ[1/\ell_G]$ by Step II, hence torsion-free, we conclude that $\tau[1/\ell_G]:\calO(B^\vee_e)[1/\ell_G]\to H_*(\Grass_G)[1/\ell_G]$ is injective. Combining this with Step I, (1) is proved.

\end{proof}

\begin{remark}
\begin{enumerate}
\item []
\item Inverting $\ell_G$ in the statement of the above theorem is necessary because $e$ is not regular modulo $\ell_G$, hence the fiber of $B^\vee_e$ over $\FF_{\ell_G}$ does not have the same dimension as $\Spec H_*(\Grass_G,\FF_{\ell_G})$. However, the proof of the above theorem implies that $\Spec H_*(\Grass_G)$ is always isomorphic to the Zariski closure in $B_e^\vee$ of its generic fiber.
\item The theorem shows that no primes other than $\ell_G$ need to be inverted, even for those ``bad primes'' of $G$ (i.e., those dividing the coefficients of the highest root in terms of a linear combination of simple roots). For example, let $G$ be the exceptional group $G_2$. Then Bott (cf. \cite{Bott}) shows that
\[H_*(\Omega K)\cong \ZZ[u,v,w]/(2v-u^2).\]
On the other hand, one can show (using formulae in \cite{K}) that
\[\calO(B^\vee_e)\cong\ZZ[A,C,D,E,F]/(2C-3A^2,3(D-3A^3),3(E+C^2-9A^4))\]
We find that $H_*(\Omega K,\FF_2)\cong\calO(B^\vee_e)\otimes\FF_2$ although 2 is considered as a bad prime for group $G_2$.
\end{enumerate}
\end{remark}

\subsection{Description of the cohomology}\label{dist} Dually, we have a description of the cohomology of $\Grass_G$. Recall that if $H$ is an affine group scheme over some commutative ring $k$, the algebra of distributions on $H$ (cf. \cite[\S I.7]{J}) is defined as
\[
\Dist(H)=\{f:\calO(H)\to k | f \mbox{ is }k\mbox{-linear}, f(\frm^{n+1})=0 \mbox{ for some } n\}.
\]
Here $\frm$ is the augmented ideal of $\calO(H)$. It has a natural algebra structure dual to the coalgebra structure on $\calO(H)$. $\Dist(H)$ can be also regarded as the algebra of left invariant differential operators on $H$. If, in addition, $H$ is infinitesimally flat (i.e. $\calO(H)/\frm^{n+1}$ is flat over $k$ for any $n$), then $\Dist(H)$ has a natural structure of a Hopf algebra over $k$.

Now let $H=B_e^\vee$. It is proved by Springer (cf. \cite{Spr}) that
$B^\vee_e\otimes\FF_p$ is smooth if the prime $p$ is good (i.e. not bad) for
$G^\vee$. This fact, together with the flatness of $B^\vee$ over
$\ZZ[1/\ell_G]$ as shown in the course of proving Theorem \ref{th:main},
implies that $B^\vee_e$ is smooth over $\ZZ'$, where $\ZZ'$ is
obtained from $\ZZ$ by inverting the bad primes of $G^\vee$. In
particular, $B^\vee_e$ is infinitesimally flat over $\ZZ'$.

\begin{cor}
Assume that $G$ is almost simple and simply-connected. Then there is an isomorphism of algebras over $\ZZ$
\begin{equation}\label{cohotau}
\tau^*:H^*(\Grass_G)\isom\Dist(U^\vee_{e}),
\end{equation}
which is an isomorphism of Hopf algebras over $\ZZ'$.
\end{cor}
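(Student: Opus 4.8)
The plan is to obtain the corollary from Theorem~\ref{th:main}(1) by a formal duality, treating the isomorphism of $\ZZ$-algebras and its refinement to a Hopf isomorphism over $\ZZ'$ separately. Since $G$ is simply-connected, $Z^\vee$ is trivial, so $B^\vee_e=U^\vee_e$ and Theorem~\ref{th:main}(1) provides a surjection of Hopf algebras $\tau\colon\calO(U^\vee_e)\twoheadrightarrow H_*(\Grass_G)$ which becomes an isomorphism after inverting $\ell_G$. Two properties of $\tau$ will be used: it is homogeneous, the source carrying the grading by the $\GG_m$-action of conjugation by $2\rho^\vee$ (under which $U^\vee_e$ is stable because $e\in\frg^\vee_2$; this is the non-equivariant shadow of Proposition~\ref{p:ZU}(1)); and $H_*(\Grass_G)$ is $\ZZ$-free. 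It follows that $\tau$ identifies $H_*(\Grass_G)$ with the maximal torsion-free quotient of $\calO(U^\vee_e)$: the kernel of the induced surjection $\calO(U^\vee_e)/(\mathrm{torsion})\twoheadrightarrow H_*(\Grass_G)$ is killed by a power of $\ell_G$ while sitting inside a torsion-free module, hence is zero.

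The second ingredient is the following elementary remark. If $H$ is an affine group scheme over $\ZZ$ whose coordinate ring is graded and connected (i.e. $\calO(H)_0=\ZZ$ and $\calO(H)$ is nonzero only in degrees of one sign) and finitely generated as a $\ZZ$-algebra, then $\Dist(H)$ coincides, as an algebra, with the graded $\ZZ$-dual $\bigoplus_k\Hom_\ZZ(\calO(H)_k,\ZZ)$ equipped with the multiplication dual to the comultiplication of $\calO(H)$. One inclusion is immediate: a homogeneous functional of degree $-k$ annihilates $\frm^{k+1}$, which lives in degrees of absolute value $>k$. Conversely, if the algebra generators have degree of absolute value $\le d$, then $\calO(H)_m\subseteq\frm^{n+1}$ whenever $|m|\ge(n+1)d$, so a distribution vanishing on $\frm^{n+1}$ is supported in finitely many degrees, hence is a finite sum of its homogeneous components. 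Applied to $\Spec H_*(\Grass_G)$, which is graded connected because $\pi_0(\Grass_G)$ is trivial and is $\ZZ$-finitely generated by Bott's theorem~\cite{Bott}, and using that cup product on $H^*(\Grass_G)$ is dual to the coproduct of $H_*(\Grass_G)$, this identifies $H^*(\Grass_G)$ with $\Dist(\Spec H_*(\Grass_G))$ as algebras. Applied to $U^\vee_e$ — whose coordinate ring is a finitely generated graded connected quotient of $\calO(U^\vee)$ — it gives $\Dist(U^\vee_e)=\bigoplus_k\Hom_\ZZ(\calO(U^\vee_e)_k,\ZZ)$; and since $\Hom_\ZZ(-,\ZZ)$ annihilates $\ZZ$-torsion, the first paragraph gives $\Hom_\ZZ(\calO(U^\vee_e)_k,\ZZ)=\Hom_\ZZ(H_*(\Grass_G)_k,\ZZ)$ for every $k$. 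Hence $\Dist(U^\vee_e)$ is precisely the graded dual of $H_*(\Grass_G)$, that is, $H^*(\Grass_G)$.

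Putting these together, the graded $\ZZ$-dual $\tau^*$ of $\tau$, which lands in $\Dist(U^\vee_e)$ by the previous paragraph, is an isomorphism of algebras over $\ZZ$; compatibility with the multiplications holds because $\tau=\sigma^*$ is a morphism of coalgebras, being the comorphism of the group-scheme homomorphism $\sigma$. For the Hopf statement over $\ZZ'$ one invokes the discussion preceding the corollary: $U^\vee_e$ is smooth over $\ZZ'$, hence infinitesimally flat, so $\Dist(U^\vee_e)\otimes\ZZ'$ carries its natural Hopf algebra structure, whose comultiplication is dual to the Pontryagin product on $\calO(U^\vee_e)$. Since $\tau$ is also a ring homomorphism for the Pontryagin products, $\tau^*$ intertwines the comultiplications over $\ZZ'$, and an algebra isomorphism intertwining comultiplications is a Hopf algebra isomorphism.

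I expect the one genuine subtlety to be the bookkeeping around the $\ZZ$-torsion in $\calO(U^\vee_e)$ — which is really present, since $U^\vee_e$ fails to be flat over $\ZZ$ at the primes dividing $\ell_G$ (for $G_2$ this is visible in the explicit presentation in the preceding remark) — namely that forming $\Dist$ does not see it. The remaining steps are purely formal: the $\ZZ$-duality between $H^*(\Grass_G)$ and $H_*(\Grass_G)$, the grading supplied by Proposition~\ref{p:ZU}(1), Bott's finiteness theorem, and the smoothness recorded immediately before the corollary.
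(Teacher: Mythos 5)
Your proof is correct and takes essentially the same route as the paper's: take the graded $\ZZ$-dual of the surjection $\tau\colon\calO(U^\vee_e)\twoheadrightarrow H_*(\Grass_G)$ from Theorem~\ref{th:main}(1), whose kernel is torsion and hence invisible to $\Hom_\ZZ(-,\ZZ)$, and identify $\Dist(U^\vee_e)$ with that graded dual. You fill in slightly more detail than the paper does --- in particular, the finite-generation hypothesis needed to see that every distribution is a finite sum of homogeneous functionals --- but the underlying argument is the one the paper has in mind.
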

\begin{proof}
Observe that $Z^\vee$ is trivial in this case, hence $U^\vee_e=B^\vee_e$. In the course of proving Theorem \ref{th:main} we have shown that $\tau:\calO(U^\vee)\to H_*(\Grass_G)$ is an isomorphism up to torsion. Observe that $\Dist(U^\vee_e)$ can be identified with the graded dual of $\calO(U^\vee_e)$ because elements in the augmentation ideal of $\calO(U^\vee_e)$ all have positive degrees. Taking the graded dual of $\tau$ we get the desired isomorphism $\tau^*$ in \eqref{cohotau}.
\end{proof}

\subsection{Description of the $G$-equivariant homology}\label{ss:Geq} In this subsection, we assume that $G$ is almost simple and simply-connected. We now interpret the $G$-equivariant homology $H^G_*(\Grass_G)$ in terms of the regular centralizer group scheme of $G^\vee$. Let us recall that a prime $p$ is called a torsion prime of $G$ if $H^*(G,\ZZ)$ has $p$-torsion. Let $S_1$ be the multiplicative set generated by the torsion primes of $G$, and let $\ZZ_{S_1}$ be the corresponding localization, i.e., the localization away from the torsion primes. Borel (see \cite{Borel1}) proved that
\begin{equation}\label{RGRT}
R_G\otimes\ZZ_{S_1}\isom (R_T\otimes\ZZ_{S_1})^W. 
\end{equation}
Since $N_G(T)$ acts on $\Grass_G$, it induces an action of $W=N_G(T)/T$ on $H^T_*(\Grass_G)$ compatible with the natural $W$-action on $R_T$. The natural map $H^G_*(\Grass_G)\to H^T_*(\Grass_G)$ then factors through the $W$-invariants of $H^T_*(\Grass_G)$. Using the isomorphism \eqref{RGRT}, it is easy to show that
\begin{equation*}
H^G_*(\Grass_G,\ZZ_{S_1})\isom H^T_*(\Grass_G,\ZZ_{S_1})^W
\end{equation*}
as Hopf algebras over $R_G\otimes\ZZ_{S_1}$.

On the other hand, let $S_2$ be the multiplicative set generated by bad primes of $G^\vee$ and those  dividing $n+1$ if $G$ is of type $A_n$. Clearly $\ell_G,n_G\in S_2$, and the result in \cite{Borel2} implies $S_1\subset S_2$. Let $\ZZ_{S_2}$ be localization of $\ZZ$ with respect to $S_2$. Consider the universal centralizer scheme $I^\vee$ over $\frg^\vee$ (i.e., the fiber of $I^\vee$ over $x\in\frg^\vee$ is the centralizer $G^\vee_x$). After inverting primes in $S_2$, we have the following identifications
\begin{equation}\label{Winv}
(\frg^\vee/\!\!/G)\times\Spec\ZZ_{S_2}\isom(\frt^\vee/\!\!/W)\times\Spec\ZZ_{S_2}\isom(\Spec R_G\otimes\ZZ_{S_2})
\end{equation}
where the latter isomorphism uses the element $f$ in Proposition \ref{p:f}. It can be shown that the restriction of $I^\vee$ to the regular locus $(\frg^\vee)^{reg}\times\Spec\ZZ_{S_2}$ descends to a smooth group scheme $J^\vee$ over $\Spec(R_G\otimes\ZZ_{S_2})\cong(\frg^\vee/\!\!/G)\times\Spec\ZZ_{S_2}$. In fact, this is essentially proved in \cite{Ngo} with all primes dividing $\# W$ inverted. But further argument shows that only primes in $S_2$ need to be inverted.

\begin{prop}
There is a natural isomorphism of group schemes over $\Spec(R_G\otimes\ZZ_{S_2})$:
\begin{equation*}
\Spec H^G_*(\Grass_G,\ZZ_{S_2})\isom J^\vee.
\end{equation*}
\end{prop}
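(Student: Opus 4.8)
The plan is to descend the $T$-equivariant statement of Theorem \ref{th:main}(2) along the finite flat $W$-cover $\Spec R_T\to\Spec R_G$. Two ingredients are already available: $H^G_*(\Grass_G,\ZZ_{S_2})\cong H^T_*(\Grass_G,\ZZ_{S_2})^W$ as Hopf algebras over $R_G\otimes\ZZ_{S_2}=(R_T\otimes\ZZ_{S_2})^W$ (established just above from Borel's isomorphism \eqref{RGRT}, using $S_1\subseteq S_2$), and the fact that $J^\vee$ is by construction the descent of $I^\vee|_{(\frg^\vee)^{reg}}$ along $q\colon(\frg^\vee)^{reg}\times\Spec\ZZ_{S_2}\to(\frg^\vee/\!\!/G^\vee)\times\Spec\ZZ_{S_2}$. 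Throughout I work over $\ZZ_{S_2}$ and use that $Z^\vee=1$ (as $G$ is simply-connected), so $B^\vee_e=U^\vee_e$ and $B^\vee_{e^T}=U^\vee_{e^T}$.

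First I pass to the $T$-equivariant side. By Theorem \ref{th:main}(2), $\sigma^T$ identifies $\Spec H^T_*(\Grass_G,\ZZ_{S_2})$ with $B^\vee_{e^T}\times\Spec\ZZ_{S_2}$, a group scheme over $\Spec(R_T\otimes\ZZ_{S_2})$; using the bilinear form $f$ of Proposition \ref{p:f} and the identifications \eqref{Winv}, I identify $\Spec(R_T\otimes\ZZ_{S_2})$ with $\frt^\vee\times\Spec\ZZ_{S_2}$ so that $e^T=e+f$ becomes the tautological section $j\colon t\mapsto e+t$ of $\frb^\vee$. The key point is that $j$ lands in $(\frg^\vee)^{reg}$. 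Over every geometric point of $\Spec\ZZ_{S_2}$ the prime is not bad for $G^\vee$, so $e$ is a regular nilpotent element ($e$ is $T^\vee$-conjugate to $\sum_ix_i$ since the coefficients $|\alpha_i|^2$ are units; cf. \cite{K} and Step II of the proof of Theorem \ref{th:main}), whence $\dim Z_{G^\vee}(e)=r:=\operatorname{rank}G^\vee$; and since conjugation by $2\rho^\vee(s)$ carries $e+t$ to $s^2e+t$ (as $e\in\frg^\vee_2,\ t\in\frg^\vee_0$), the element $e+t$ is $G^\vee$-conjugate to $e+\varepsilon t$ for all $\varepsilon\ne0$, so its centralizer dimension is constant along $\{e+\varepsilon t:\varepsilon\ne0\}$ and, by upper semicontinuity of centralizer dimension, is $\le\dim Z_{G^\vee}(e)=r$; being always $\ge r$, it equals $r$, so $e+t$ is regular. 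A regular element lies in a unique Borel — here $\frb^\vee$ — whose $G^\vee$-centralizer it contains; hence $B^\vee_{e+t}=I^\vee_{e+t}$ for every $t$. Moreover $\chi\circ j=\pi$, where $\chi$ denotes the adjoint quotient, since $\chi|_{\frb^\vee}$ factors as $\frb^\vee\twoheadrightarrow\frt^\vee\xrightarrow{\pi}\frt^\vee/\!\!/W$; therefore $j^*I^\vee=(q\circ j)^*J^\vee=\pi^*J^\vee$, which is smooth — hence reduced — over $\frt^\vee\times\Spec\ZZ_{S_2}$, so the fibrewise inclusions $I^\vee_{e+t}\subseteq B^\vee$ assemble to an identity of closed subschemes of $B^\vee\times\frt^\vee$, giving a canonical isomorphism of group schemes over $\frt^\vee\times\Spec\ZZ_{S_2}$
\[
B^\vee_{e^T}\times\Spec\ZZ_{S_2}\;\isom\;\pi^*J^\vee .
\]

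It remains to descend this isomorphism along $W$. Granting that it is $W$-equivariant for the action on the left induced by $N_G(T)$ acting on $\Grass_G$ (transported through $\sigma^T$) and the tautological descent datum on the right, one passes to $W$-invariants — using that $J^\vee$ is smooth, hence flat, over $R_G\otimes\ZZ_{S_2}$, so that $\calO(J^\vee)\otimes_{R_G}(-)$ commutes with the finite limit $(-)^W$, and that $R_T^W=R_G$ after inverting $S_2$ — to obtain
\[
H^G_*(\Grass_G,\ZZ_{S_2})=H^T_*(\Grass_G,\ZZ_{S_2})^W=\bigl(\calO(J^\vee)\otimes_{R_G}R_T\bigr)^W=\calO(J^\vee),
\]
which is the assertion. \emph{This $W$-equivariance is the main obstacle.} I would establish it by reduction to the ``torus part'': by Remark \ref{r:loc} the composite of $\tsigma^T$ with the quotient $B^\vee\to T^\vee$ is the equivariant localization map $Loc_*$, and since $N_G(T)$ permutes the $T$-fixed points by $t^\lambda\mapsto t^{w\lambda}$, the resulting $W$-action on $T^\vee\times\Spec R_T$ is the diagonal Weyl action. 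That same quotient map identifies $B^\vee_{e^T}$ with $T^\vee\times\Spec R_T$ over the regular semisimple locus $U\subseteq\frt^\vee$ (there $e+t$ is $U^\vee$-conjugate to $t$, so $I^\vee_{e+t}$ is a maximal torus mapping isomorphically to $T^\vee$), and under this same identification the descent datum of $\pi^*J^\vee$ is again the diagonal Weyl action. Hence the two $W$-structures on $B^\vee_{e^T}$ coincide over $U$; and since $B^\vee_{e^T}=\pi^*J^\vee$ is reduced and separated while $U$ is schematically dense in $\frt^\vee$ (as $B^\vee_{e^T}$ is flat over $\frt^\vee$), they coincide everywhere.

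A cleaner alternative, bypassing this bookkeeping, is to rerun the constructions of \S\ref{relHtodual} and \S\ref{first Chern class} with $T$ replaced by $G$: the fiber functor $H^*_G$ defines a $G^\vee$-torsor $\calE$ over $\Spec R_G$ (Remark \ref{r:torsorE}), the equivariant Chern class $c^G_1$ defines a Higgs field $e^G\in\Gamma(\Spec R_G,\Ad(\calE))$ whose image under $\Spec R_G\to[\frg^\vee/G^\vee]\to\frg^\vee/\!\!/G^\vee$ is the identity of $\Spec R_G\cong\frg^\vee/\!\!/G^\vee$ (checked by pulling back to $\Spec R_T$, where $\calE$ trivializes and $e^G$ becomes $e+f$) and is therefore everywhere regular, and the $G$-equivariant analogue of $\sigma_{can}$ factors through the centralizer of $e^G$ in $\Aut^\otimes(H^*_G)$, the automorphism group scheme of $\calE$, which is $J^\vee$ by the descent defining it; that the resulting homomorphism is an isomorphism then follows by base change to $\Spec R_T$ (the previous step) or to $\Spec\QQ$.
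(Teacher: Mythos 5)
Your proposal follows essentially the same route as the paper's sketch: identify $H^G_*\cong(H^T_*)^W$ via Borel's theorem, identify $B^\vee_{e^T}$ with $\pi^*J^\vee$ using regularity of $e^T$, descend along the $W$-cover, and verify $W$-equivariance by reducing to the regular semisimple / generic locus via equivariant localization (Remark \ref{r:loc}). You supply more intermediate detail — notably the $\GG_m$-scaling argument that $e+t$ is everywhere regular, and the explicit check that the descent data agree — where the paper simply invokes flatness of $B^\vee_{e^T}$ and $\pi^*J^\vee$ and leaves the $W$-equivariance to the reader; your ``cleaner alternative'' via the torsor $\calE$ and the Higgs field $e^G$ of Remark \ref{r:torsorE} is a genuinely nice repackaging but is not pursued in the paper.
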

\begin{proof}[Sketch of proof]
Let $\pi:\Spec(R_T\otimes\ZZ_{S_2})\to\Spec(R_G\otimes\ZZ_{S_2})$, then by the definition of the regular centralizer, we have a closed embedding $\beta:B^\vee_{e^T}\to G^\vee_{e^T}=\pi^*J^\vee$ (from now on we view $B^\vee_{e^T}$ as a scheme over $\Spec(R_T\otimes\ZZ_{S_2}$)). Using the flatness of $B^\vee_{e^T}$ and $\pi^*J^\vee$ over $R_G\otimes\ZZ_{S_2}$, it is easy to see that $\beta$ is an isomorphism. Therefore the coordinate ring $\calO(J^\vee)$ can be identified with the $W$-invariants of $\calO(B^\vee_{e^T})\isom\calO(G^\vee_{e^T})$. Recall from \eqref{Winv} that $H^G_*(\Grass_G,\ZZ_{S_2})$ is also identified with the $W$-invariants of $H^T_*(\Grass_G,\ZZ_{S_2})$. It remains to show that the $W$-actions on $H^T_*(\Grass_G,\ZZ_{S_2})$ and $\calO(B^\vee_{e^T})$ correspond to each other under the isomorphism $\sigma^T$ in Theorem \ref{th:main}(2). For this, it is enough to argue over $\Frac(R_T)$ and use equivariant localization (Remark \ref{r:loc}) again. Details are left to the reader.
\end{proof}

\end{document}